\newtheorem{theorem}{Theorem}[section]
\newtheorem{lemma}[theorem]{Lemma}
\newtheorem{proposition}[theorem]{Proposition}
\newtheorem{corollary}[theorem]{Corollary}
\newtheorem{definition}[theorem]{Definition}
\newtheorem*{proposition*}{Proposition}
\newtheorem*{lemma*}{Lemma}
\newtheorem*{theorem*}{Theorem}
\newtheorem*{corollary*}{Corollary}
\title{Closed ray affine manifolds}
\author{Raphaël V.~{\sc Alexandre}\footnote{Institut de Math\'ematiques de Jussieu-Paris Rive Gauche, Sorbonne Université, 4 place Jussieu, 75252 Paris Cédex, France.
ACG,
OURAGAN (IMJ-PRG, INRIA Paris, Sorbonne Université, Université de Paris, CNRS).
Email address: {\tt raphael.alexandre@math.cnrs.fr}.}}
\DeclareMathOperator*\id{id}
\newcommand\R{\mathbf{R}}
\DeclareMathOperator*\Aff{Aff}
\DeclareMathOperator*\GL{GL}
\DeclareMathOperator*\Aut{Aut}
\newcommand{\iI}{\mathopen{[}0\,,1\mathclose{]}}
\newcommand\rT{{\rm T}}
\newcommand\thmpartialcomp{
Let $(G_1,\R^n)$ be a rank one ray geometry.  Let $M$ be
a closed $(G_1,\R^n)$-manifold.
Then $M$ is either complete or there exists an affine subspace $I\subset \R^n$ such that $D\colon\widetilde M \to \R^n-I$ is a cover onto its image.
}
\newcommand\thmauto{
Let $(G_1,\R^n)$ be a rank one  ray geometry.  Let $M$ be
a closed $(G_1,\R^n)$-manifold. If $\Aut(M)$ does not act properly on $M$ then $M$ is complete.
}
\newcommand\thmmarkus{
Let $(G_1,\R^n)$ be a rank one ray geometry with parallel volume. Every closed $(G_1,\R^n)$-manifold   is complete.
}
\begin{document}
\maketitle

\begin{abstract}
We consider  closed manifolds that possess a so called  rank one \emph{ray structure}. That is a (flat) affine structure such that the linear part is given by the products of a diagonal transformation and a commuting rotation.

We show that closed manifolds with a rank one ray structure are either complete or their developing map is a cover onto the complement of an affine subspace. This result extends the geometric picture given by Fried on closed similarity manifolds.

We prove, in the line of Markus conjecture, that if the rank one ray geometry has parallel volume, then closed manifolds are necessarily complete.
 Finally, we show that the automorphism group of a closed manifold acts non properly when the manifold is complete. 
\end{abstract}

\section{Introduction}

We are interested in the study of \emph{ray geometries}. Those are affine geometries with a constrain on the linear part. We ask, for a fixed basis  of $\R^n$, that the linear transformations $f$ verify  $f=f_mf_a=f_af_m$, with $f_m$ a rotation and $f_a$ a diagonal transformation:
\begin{equation}
f_a = \begin{pmatrix}\beta_1 \\ & \beta_2 \\ &&\ddots \\ &&& \beta_n\end{pmatrix}, \; \beta_i>0.
\end{equation}

Fried~\cite{Fried} has shown that if $f_a$ is always an homothety (the corresponding ray geometry is in fact the similarity geometry of $\R^n$) then every (flat) closed manifold is either complete or radiant: the developing map is a cover onto the complement of a point. The proof rely extensively on the fact  that homotheties are either global contractions or global expansions.
In a previous work, we extended Fried theorem to Carnot groups in general~\cite{Ale}.
But it still relies on the dynamic hypothesis that  $f_a$ either contracts globally or expands globally. It is of course very rare for affine transformations to verify this dynamic property.

Carrière~\cite{Carriere} made a study about affine manifolds that have \emph{$1$-discompacity} holonomy (any infinite sequence of  transformations can contract at most one direction). His proof is clearly linked to the ideas of Fried. The common point is the study of the dynamics associated to an incomplete geodesic.  We will call it a \emph{Fried dynamics}.

\vskip10pt
A \emph{rank one} ray geometry is a ray geometry with $f_a$ that must belong to a $1$-parameter subgroup. In general, $f_a$ does not expand or contract globally.
We show the following. 

\begin{theorem*}[\ref{thm-comp}]
\thmpartialcomp
\end{theorem*}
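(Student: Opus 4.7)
The plan is to adapt Fried's argument for similarity manifolds \cite{Fried}, replacing the single fixed point to be removed by an affine subspace whose shape is dictated by the rank one hypothesis. Assume $M$ is incomplete: there is an incomplete geodesic $\gamma\colon[0,a)\to M$ whose development extends to a finite endpoint $p\in\R^n$. By compactness of $M$, extract $t_n\to a$ and deck transformations $\delta_n$ such that $\delta_n\cdot\tilde\gamma(t_n)$ converges in $\widetilde M$ to some $\tilde x_0$. Writing $g_n=\rho(\delta_n)$ and $p_n=D(\tilde\gamma(t_n))$, one has $p_n\to p$ and $g_n(p_n)\to D(\tilde x_0)$. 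This is the Fried dynamics; it produces the basic divergent holonomy sequence whose action on $p$ is controlled.

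Next, I would extract the affine subspace $I$ using the rank one hypothesis. The linear part factors as $L(g_n)=R_n\exp(t_nA)$, with $R_n$ in the compact centralizer of the one-parameter subgroup generated by $A$. Since $g_n$ cannot remain in a compact subset of $G_1$, one has $|t_n|\to\infty$; up to extraction assume $t_n\to+\infty$ and $R_n\to R_\infty$. Each $R_n$ preserves the eigenspace decomposition $\R^n=E^-\oplus E^0\oplus E^+$ of $A$, and $L(g_n)$ contracts on $E^-$, rotates on $E^0$, and expands on $E^+$. Because $E^-\oplus E^0$ is invariant under $L(g_n)$, a contraction-mapping argument on the transverse expanding $E^+$ direction yields an affine subspace $I_n\subset\R^n$, parallel to $E^-\oplus E^0$, preserved by $g_n$ and transversally expanding. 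After a final extraction $I_n\to I$. Since $L(g_n)$ blows up along $E^+$ while $g_n(p_n)$ is bounded, the $E^+$-component of $p_n$ relative to $I_n$ must go to zero, so $p\in I$.

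Finally, I would verify that $I\cap D(\widetilde M)=\emptyset$ and that $D\colon\widetilde M\to D(\widetilde M)\subset\R^n-I$ is a covering. For disjointness, any $q=D(\tilde y)\in I$ would, after a new extraction applied to $\delta_n^{-1}\cdot\tilde y$ in the compact $M$, contradict the expanding behaviour of $g_n$ transverse to $I_n$. For the covering property, since $D$ is a local diffeomorphism, it suffices to check path-lifting into $D(\widetilde M)$: a path in $D(\widetilde M)$ whose lift fails to extend would reach a Cauchy boundary point, and the Fried dynamics at that endpoint produces a new affine subspace $I'$ containing it, necessarily parallel to $E^-\oplus E^0$ (because the generator $A$ is fixed) and disjoint from $D(\widetilde M)$ by the same argument, contradicting that the path stays in $D(\widetilde M)$. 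The main obstacle is this last step: controlling \emph{every} Fried limit simultaneously so that the developing map avoids a coherent locus. This is exactly where rank one is indispensable, since it forces all limit subspaces to be parallel with a common transverse direction $E^+$, permitting a uniform treatment of the boundary; without it, different incomplete geodesics could generate transverse forbidden subspaces and the statement would fail.
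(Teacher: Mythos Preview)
Your overall architecture is right and matches the paper's: set up a Fried dynamics at an incomplete geodesic, extract from the rank one hypothesis a decomposition $E^-\oplus E^0\oplus E^+$ (the paper writes this $F\oplus P\oplus E$, with the opposite convention on which way the holonomy goes), locate an invariant affine subspace, and then prove a covering statement. But two of your three steps contain genuine gaps.

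\textbf{Disjointness of $I$ from the image.} Your argument that $q=D(\tilde y)\in I$ leads to a contradiction does not work as written. You apply $\delta_n^{-1}$ to $\tilde y$ and invoke ``the expanding behaviour of $g_n$ transverse to $I_n$''; but $q\in I\approx I_n$, so $g_n^{-1}$ moves $q$ \emph{along} $I_n$ (and expands in the $E^-$ directions inside $I_n$), not transversally away from it. There is no blow-up to contradict recurrence. In Fried's similarity case this step works because everything is contracted to a point; here the presence of non-trivial $E^-$ and $E^0$ components makes the argument fail. The paper does not argue this way at all: instead it proves directly, via a limit-of-convex-sets lemma, that each $y_i$ in the Fried dynamics \emph{sees an entire half-space} $H_i\subset\R^n$, and then defines $I$ as the invisible part of the boundary of the limiting half-space $H_x$. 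Disjointness is then automatic. This construction of visible half-spaces is the technical heart of the paper and is entirely absent from your proposal.

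\textbf{Coherence of $I$ and the covering step.} You produce one $I$ from one incomplete geodesic, and for other incomplete geodesics you only get parallel subspaces $I'$. Parallelism alone does not give a single $I$ with $D(\widetilde M)\subset\R^n-I$, nor does it give the covering property onto the image. Your path-lifting sketch is circular: if the lift fails at an endpoint lying in some new $I'$ disjoint from $D(\widetilde M)$, that is perfectly consistent with the path itself remaining in $D(\widetilde M)$, so there is no contradiction. The paper closes this gap by showing, again using the visible half-spaces and an injectivity argument on overlapping convex sets, that the invisible boundary set $I\subset\partial H_x$ is \emph{independent of $x$}; then for any $x$ the maximal Euclidean ball visible from $x$ has its invisible boundary points on this common $I$, and path-lifting in $\R^n-I$ follows immediately.

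In short: the missing idea is the construction and exploitation of large visible \emph{convex} regions (half-spaces) via limits of convex sets under the holonomy, which simultaneously yields disjointness and the global coherence of $I$. Without it, neither of your last two steps goes through.
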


Associated to a classic discreteness argument from Matsumoto~\cite{Matsumoto}, it implies Fried theorem.  There remains open questions for general rank one ray geometries, notably: \emph{what are the rank one geometries that have incomplete manifolds?}
There might be ray geometries for which no closed manifold is incomplete.

\paragraph{Markus conjecture}In relation to this question we need to discuss  Markus conjecture~\cite{Markus}.
It states that \emph{closed flat affine manifolds with parallel volume are complete}.
Until now, the only  known results about Markus conjecture rely on strong conditions on the holonomy: abelian~\cite{Smillie}, nilpotent~\cite{FGH}, solvable~\cite{GH2} and distal~\cite{Fried2}.
There are also  results requiring a  Kleinian-type geometric hypothesis~\cite{Jo,Tholozan}.
An important case comes from Carrière~\cite{Carriere} under the hypothesis of $1$-discompacity.

We prove Markus conjecture for  closed manifolds having a rank one ray structure. It follows from the combination of theorem~\ref{thm-comp} and the irreducibility of the linear holonomy when there is a parallel volume from~\cite{GH}.

\begin{corollary*}[\ref{thm-markus}]
\thmmarkus
\end{corollary*}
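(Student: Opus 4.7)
The plan is to derive the corollary as a direct consequence of Theorem~\ref{thm-comp} combined with the Goldman--Hirsch irreducibility theorem~\cite{GH}. First I would feed $M$ into the dichotomy of Theorem~\ref{thm-comp}: either $M$ is already complete, in which case there is nothing to prove, or the developing map $D\colon \widetilde M \to \R^n \setminus I$ is a covering onto the complement of some affine subspace $I \subsetneq \R^n$. The whole task reduces to ruling out this second, incomplete alternative whenever the ray geometry carries a parallel volume.

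Suppose we are in that second case and write $\rho\colon \pi_1(M) \to G_1$ for the holonomy representation. Since $D$ is $\rho$-equivariant and its image equals $\R^n \setminus I$, every element of $\rho(\pi_1 M)$ must preserve $\R^n \setminus I$ setwise, hence it must preserve the affine subspace $I$. The direction $\vec{I} \subset \R^n$ of $I$ is therefore invariant under the linear part of the holonomy. Because $I$ cannot equal $\R^n$ (otherwise $D$ would have empty image) and the case where $I$ is absent would already give completeness, $\vec{I}$ is a \emph{proper} invariant subspace, i.e.\ the linear holonomy of $M$ is reducible.

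At this point I would invoke the result of Goldman and Hirsch~\cite{GH}: on a closed affine manifold whose linear holonomy preserves a parallel volume form, the linear holonomy leaves no proper linear subspace of $\R^n$ invariant. This is exactly the contradiction needed to eliminate the incomplete alternative, so $M$ must be complete. The only step that requires genuine care is verifying that $I$ is truly $\rho$-invariant, which follows cleanly from the equivariance of $D$ and the topological characterization of its image. The main potential obstacle I foresee would be a mismatch between the hypotheses of the Goldman--Hirsch theorem and the rank one ray setting, but since parallel volume is a purely linear condition on the holonomy, the statement of~\cite{GH} applies verbatim and no extra work is needed.
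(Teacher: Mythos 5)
Your overall skeleton is the one the paper uses: apply Theorem~\ref{thm-comp}, and in the incomplete case observe that the holonomy must preserve the affine subspace $I$ (since $D$ is $\rho$-equivariant with image $\R^n-I$), then contradict this with Goldman--Hirsch. But the way you invoke \cite{GH} is wrong, and this is a genuine gap rather than a cosmetic one. You pass to the linear part and claim that Goldman--Hirsch says the \emph{linear} holonomy of a closed affine manifold with parallel volume leaves no proper linear subspace invariant. No such statement holds: the flat torus $\R^n/\Z^n$ has parallel volume and trivial linear holonomy, which preserves every subspace. Worse, in the rank one ray setting the linear holonomy lies in $KA_1$, which (because $K$ centralizes the diagonal group $A_1$) preserves the weight-space decomposition of $A_1$; so for a geometry such as $(\beta x,\beta y,\beta^{-2}z)$ the linear holonomy of \emph{every} $(G_1,\R^n)$-manifold is reducible. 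If your version of \cite{GH} were correct, your argument would rule out complete manifolds with parallel volume as well, which is absurd; so the contradiction you derive does not come from the incompleteness at all.

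There is also a degenerate case your reduction misses: $I$ may be a single point (the radiant case, exactly Fried's situation), and then $\vec I=\{0\}$ produces no nontrivial invariant linear subspace, so even granting a linear irreducibility statement you would conclude nothing. The statement of Goldman--Hirsch that the paper actually relies on is the \emph{affine} one: the affine holonomy of a closed affine manifold with parallel volume cannot preserve a proper affine subspace (in particular it cannot fix a point, i.e.\ the manifold cannot be radiant); this follows from the nonvanishing of the radiance obstruction and its powers. With that statement, the invariance of $I$ itself --- not of its direction --- gives the contradiction directly, including the point case. So the fix is simply to keep $I$ as an affine invariant object and quote \cite{GH} in its affine (reducibility/radiance) form, as the paper does.
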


Corollary~\ref{thm-markus} provides many new examples since the discompacity is not constrained. For instance, if we assume the linear part to be given by the diagonal transformations $(\beta x,\beta y,\beta^{-2} z)$, then it has parallel volume and has $2$-discompacity  (by taking $\beta\to 0$ both the directions $x$ and $y$ are contracted). Corollary~\ref{thm-markus} ensures that closed manifold having such structures are always complete.

\paragraph{The automorphism group}Parallel volume is an interesting condition to expect completeness. Another one is the non-compacity of the automorphism group.
It is vaguely conjectured by \cite{Gromov} that if the automorphism group is large enough, it should suffice to classify the manifold.
We prove the following.

\begin{theorem*}[\ref{thm-auto}]
\thmauto
\end{theorem*}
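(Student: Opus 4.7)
The plan is to prove the contrapositive: assume $M$ is not complete, and show that $\Aut(M)$ acts properly on $M$. Since $M$ is compact, properness of a Lie group action on it is equivalent to compactness of the acting group, so it suffices to prove that $\Aut(M)$ is compact.

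By Theorem~\ref{thm-comp}, the developing map is a cover $D\colon\widetilde M \to \R^n - I$ onto its image $\Omega$ for some proper affine subspace $I$. Each $f\in \Aut(M)$ lifts to a $(G_1,\R^n)$-automorphism $\tilde f$ of $\widetilde M$ with a unique $g_{\tilde f}\in G_1$ satisfying $D\circ\tilde f = g_{\tilde f}\circ D$; since $\tilde f$ preserves $\widetilde M$, the element $g_{\tilde f}$ preserves $\Omega$ and hence $I$ (canonically attached to $\Omega$). This yields a continuous homomorphism $\Phi\colon \widetilde{\Aut}(M) \to \mathrm{Stab}_{G_1}(I)$ whose image contains the holonomy~$\Gamma$.

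Now suppose for contradiction that $\Aut(M)$ is non-compact, and extract a sequence $f_n$ escaping every compact set in $\Aut(M)$. Using $\pi_1(M)$-cocompactness, one arranges lifts $\tilde f_n$ so that a fixed compact $\tilde C \subset \widetilde M$ satisfies $\tilde f_n(\tilde C) \cap \tilde C \neq \emptyset$, while the developed elements $g_n := \Phi(\tilde f_n) \in \mathrm{Stab}_{G_1}(I)$ diverge in $G_1$. The compact $K := D(\tilde C) \subset \Omega$ then satisfies $g_n(K) \cap K \neq \emptyset$. Decomposing $\R^n = V \oplus W$ with $V$ the direction of $I$, the elements $g_n$ act block-diagonally in this splitting (both the rotation and diagonal factors preserve $V$), and the rank-one hypothesis pins down the transverse linear block as $f_a^{(n)}|_W \cdot f_m^{(n)}|_W$: a bounded rotation times a single non-compact one-parameter diagonal piece $\exp(t_n A_0|_W)$. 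Hence if $g_n$ leaves every compact of $G_1$, then either the translation part or $|t_n|$ must diverge; since $K$ is bounded away from $I$ in the $W$-coordinate, the orbit $g_n(K)$ must either accumulate on $I$ (when the transverse factor contracts) or escape to infinity (when it expands, or when the translation diverges), contradicting $g_n(K) \cap K \neq \emptyset$ inside $\Omega$.

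The main obstacle is this last dynamical step: one must rely on $A_0|_W \neq 0$, which is the precise sense in which $I$ is the intrinsic singular set of the Fried dynamics, and then carefully balance the $V$-component, the $W$-component, and the translation part to rule out any compensation between them. The rank-one hypothesis is essential here: it reduces the transverse non-compact dynamics to a single real parameter~$t_n$, making the dichotomy between contraction toward $I$ and expansion away from $K$ unavoidable.
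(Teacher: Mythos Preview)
Your contrapositive strategy and the core dynamical insight are correct and match the paper's: once $D$ covers $\R^n-I$, the normalizer $N(\Gamma)$ lands in $\mathrm{Stab}_{G_1}(I)$, and the complement $W$ of the direction of $I$ sits inside the contracting space $E$ of a Fried dynamics, so the rank-one diagonal factor acts on $W$ with eigenvalues all of the same sign. From there you argue, in effect, that $\mathrm{Stab}_{G_1}(I)$ acts properly on $\R^n-I$, and deduce properness of $\Aut(M)$. The paper organizes the same ingredients differently: instead of proving abstract properness, it picks the Fried elements $T_{ji}\in\Gamma$ and uses them to left-multiply $g_n$ so that the $A_1$-factor becomes bounded (this is where rank one enters for the paper), and then shows the remaining translation and second $\Gamma$-correction must converge by looking separately at the $V\subset E$ and $I$ coordinates. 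Your route is arguably cleaner, since it avoids the explicit cocycle juggling with $T_{ji}$; the paper's route has the advantage of staying entirely inside $\Gamma\backslash N(\Gamma)$, so the passage from $\Aut(M)$ to $G_1$ is immediate.

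Two points in your sketch deserve more care. First, the assertion that the normalized $g_n=\Phi(\tilde f_n)$ \emph{diverge in $G_1$} is not automatic: the map $\Phi$ can have a nontrivial discrete kernel when $I$ has codimension two (the deck group of $D\colon\widetilde M\to\Omega$), so you must argue that if $g_{n_k}\to g$ then, using $\tilde f_{n_k}(\tilde C)\cap\tilde C\neq\emptyset$ and the lifting property of the covering $D$, the $\tilde f_{n_k}$ themselves converge in $\widetilde{\Aut}(M)$; this is routine but should be said. Second, your ``block-diagonal'' claim is slightly off: the linear parts preserve the direction $V$ of $I$ but need not preserve a fixed complement $W$, so the clean way to run your dichotomy is on the quotient $\R^n/V$; what you actually need there is not merely $A_0|_W\neq 0$ but that all eigenvalues of $A_0$ on the quotient have the \emph{same sign}, and this is exactly what $W\subset E$ (equivalently $I\supset P\oplus F$) gives you.
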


Interestingly,
there are counter-examples  in higher rank ray geometries. We will give an example of a radiant manifold having a ray structure  of rank two with an automorphism group  that acts non properly. 

Theorem~\ref{thm-auto} suggests that completeness with large automorphism group is a   phenomenon very special to ray geometries of rank one, in contrast to Markus conjecture where the parallel volume is the only condition.

\paragraph{Other geometries} Miner~\cite{Miner} generalized Fried theorem to the Heisenberg space instead of $\R^n$ (\cite{Ale} generalized it to every Carnot group). We expect the study of ray structures to not be too much dependent on $\R^n$ but rather on the choice of a  nilpotent space, such as the Heisenberg space. In a forthcoming preprint we examine this generalization to nilpotent spaces.

\paragraph{Organization of the paper}
In section~\ref{sec-2}, we introduce ray structures and convexity arguments.
In section~\ref{sec-3}, we study Fried dynamics: those are the dynamics associated to an incomplete geodesic. We  show theorem~\ref{thm-comp} on the completeness and incompleteness of closed manifolds. In section~\ref{sec-4} we show the completeness in the case of parallel volume (corollary~\ref{thm-markus}) and in the case of an automorphism group acting non properly (theorem~\ref{thm-auto}).

\paragraph{Acknowledgment}This work is part of the author's doctoral thesis, under the supervision of Elisha Falbel. The author is indebted and thankful to  E.~Falbel for the many hours devoted to discuss this paper.

\section{Ray structures}\label{sec-2}

\begin{definition}
Let $D\colon \widetilde M \to \R^n$ be a local diffeomorphism. A curve $\gamma\colon \iI \to \widetilde M$ is \emph{geodesic} if $D(\gamma)$ is a geodesic segment in $\R^n$.

For any $p\in \widetilde M$, define $V_p\subset \rT_p\widetilde M$  the set of the vectors such that there exists a geodesic segment $\gamma\colon \iI \to \widetilde M$ with $\gamma(0)=p$, $\gamma'(0)\in V_p$.  We say that $V_p$ is the \emph{visibility set} from (or of) $p\in \widetilde M$.
\end{definition}

\begin{definition}
Let $D\colon\widetilde M \to \R^n$ be a local diffeomorphism. A subset $C\subset \widetilde M$ is \emph{convex} if $D$ is injective on $C$ and $D(C)$ is convex. Let $x\in \widetilde M$, a point $y\in \R^n$ is \emph{visible from $x$} if there exists a geodesic segment from $x$ to $z$ such that $D(z)=y$.
\end{definition}

Following Carrière~\cite{Carriere} (see also~\cite{Benzecri,Koszul}):

\begin{proposition}[\cite{Carriere}]
Let $D\colon\widetilde M \to \R^n$ be a local diffeomorphism  for any $p\in \widetilde M$.
\begin{itemize}
\item The visible set $V_p\subset \rT_p\widetilde M$ is open.
\item $D$ is  a diffeomorphism if and only if for any $p\in\widetilde M$, $V_p= \rT_p\widetilde M$.
\item $D$ is injective on $\exp_p(V_p)$ for any $p\in \widetilde M$.
\item 
Let $C$ be convex and containing $p\in \widetilde M$. Then $C\subset \exp_p(V_p)$.
\item If for $p\in \widetilde M$, $\exp_p(V_p)$ is convex then $\exp_p(V_p)=\widetilde M$.
\item
If $C_1,C_2$ are convex subsets in $\widetilde M$ and $C_1\cap C_2\neq \emptyset$, then $D$ is injective on $C_1\cup C_2$.
\end{itemize}
\end{proposition}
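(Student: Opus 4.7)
The plan is to prove the six items in the dependency order 1, 3, 4, 6, 5, 2, since each later item relies on the earlier ones.

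Items 1, 3, 4 and 6 are essentially formal consequences of $D$ being a local diffeomorphism. For the openness of $V_p$: given $v\in V_p$ with associated geodesic $\gamma\colon\iI\to\widetilde M$, I would cover $\gamma(\iI)$ by finitely many opens on which $D$ is a diffeomorphism (compactness of $\iI$) and glue local lifts to show that every affine segment in $\R^n$ starting near $D(p)$ with direction close to $D_p v$ lifts to a geodesic in $\widetilde M$ starting near $p$. Item 3 is uniqueness of lifts: two geodesics $\gamma_1,\gamma_2$ from $p$ with $D(\gamma_1(1))=D(\gamma_2(1))$ project to the same affine segment of $\R^n$, hence coincide. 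For item 4, any $q\in C$ is reached by lifting $[D(p),D(q)]\subset D(C)$ through the bijection $D|_C$. Item 6 then follows immediately: pick $p\in C_1\cap C_2$, apply item 4 to obtain $C_1\cup C_2\subset\exp_p(V_p)$, and apply item 3 there.

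The main obstacle is item 5. Set $U=\exp_p(V_p)$; it is open, and the goal is to show it is closed. Assume $q\in\partial U$ and choose a small convex neighborhood $V$ of $q$ on which $D$ is a diffeomorphism. Then $V\cap U\neq\emptyset$, so item 6 makes $D$ injective on $U\cup V$. This rules out $D(q)\in D(U)$ (else the preimage $q'\in U$ would equal $q$ by injectivity, contradicting $q\notin U$), so $D(q)\in\partial D(U)$. Now consider $c(t)=(1-t)D(p)+tD(q)$: since $D(U)$ is open, convex and contains $D(p)$, the open segment $c([0,1))$ lies in $D(U)$ and lifts via $(D|_U)^{-1}$ to a geodesic $\tilde c\subset U$ starting at $p$. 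For $t$ close to $1$, $c(t)\in D(V)$, hence $\tilde c(t)=(D|_V)^{-1}(c(t))\to q$. The curve $\tilde c$ thus extends continuously to a full geodesic segment from $p$ to $q$, giving $q\in U$ and contradicting $q\in\partial U$. Therefore $U$ is clopen, hence $U=\widetilde M$ by connectedness.

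Item 2 then combines items 3 and 5. The forward direction is routine. For the converse, $V_p=\rT_p\widetilde M$ gives $D(\exp_p(V_p))=\R^n$ (from $D\circ\exp_p=\exp_{D(p)}\circ D_p$, with $D_p$ a linear isomorphism); together with item 3 this makes $\exp_p(V_p)$ convex in $\widetilde M$, so item 5 forces $\exp_p(V_p)=\widetilde M$, and $D$ is a bijective local diffeomorphism, hence a diffeomorphism.
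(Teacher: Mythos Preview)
The paper does not supply its own proof of this proposition: it is stated with a citation to Carri\`ere (and Benz\'ecri, Koszul) and no proof environment follows. So there is no in-paper argument to compare against.

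Your proof is correct and is essentially the standard argument from the cited sources. The dependency order $1,3,4,6,5,2$ is the natural one, and the crux---item~5---is handled properly: you use openness of $U=\exp_p(V_p)$ together with item~6 (injectivity on $U\cup V$ for a small convex $V\ni q$) to identify $\tilde c(t)$ with $(D|_V)^{-1}(c(t))$ near $t=1$, forcing $\tilde c(t)\to q$ and yielding the contradiction. One small point you might make explicit for a reader: $U$ is open because $D\circ\exp_p$ is the restriction of a linear isomorphism $\rT_p\widetilde M\to\R^n$, so $\exp_p$ is a local diffeomorphism on $V_p$; combined with item~3 this makes $\exp_p\colon V_p\to U$ a diffeomorphism onto an open set. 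Everything else is in order.
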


\begin{definition}
A manifold $M$ is a $(G,X)$-manifold (or, equivalently, has a $(G,X)$-structure) for $G$ a group acting  (by analytic diffeomorphisms) on a space $X$ if there exists $(D,\rho)$ a pair of a local diffeomorphism $D\colon\widetilde M \to X$ (called the developing map) and a morphism $\rho\colon\pi_1(M)\to G$ (called the holonomy morphism) such that
\begin{equation}
\forall \gamma\in\pi_1(M),\forall x\in\widetilde M, \; D(\gamma\cdot x)=\rho(\gamma)D(x).
\end{equation}

 Let $\Aff(\R^n) = \R^n\rtimes \GL(\R^n)$. We say that $(\Aff(\R^n),\R^n)$ is the \emph{affine geometry} of $\R^n$ and if $M$ has a $(\Aff(\R^n),\R^n)$-structure then it is  an  \emph{affine manifold}.
\end{definition}

\begin{definition}
Let $M$ be an affine manifold. It is \emph{complete} if $D\colon\widetilde M \to \R^n$ is a diffeomorphism.
\end{definition}

\begin{definition}
A \emph{ray geometry} on the space $\R^n$ is a model geometry $(G,\R^n)$ with $G= \R^n\rtimes KA$ and $KA\subset \GL(\R^n)$  such that the following conditions are verified.
\begin{enumerate}
\item The subgroup $A$ is isomorphic to a multiplicative group $(\R_+^*)^r$, with $r$ called the \emph{rank} of the ray geometry. The subgroup $K$  is compact and centralizes $A$.
\item 
There exists a fixed basis $(e_1,\dots,e_n)$ of $\R^n$ such that
for an isomorphism $(\beta_1,\dots,\beta_r)\colon A\to (\R_+^*)^r$ there exist $d_{i,j}\in \R$ such that $A$ consists of the transformations:
\begin{equation}
\begin{pmatrix}
\beta_1^{d_{1,1}}\cdots\beta_r^{d_{r,1}} \\ 
&\ddots \\
 &&\beta_1^{d_{1,n}}\cdots\beta_r^{d_{r,n}}
\end{pmatrix}
.
\end{equation}
\end{enumerate}
\end{definition}

\paragraph{Limit sets}
Later one we will need to study sequences of subsets that have a limit.

\begin{definition}
Let $B_n$ be a sequence of  subsets. Its \emph{limit set}, denoted $\lim B_n$, is the set of the points $\lim x_n$ for sequences $x_n\in B_n$ (with $x_n$ chosen for each $B_n$).
\end{definition}

\begin{proposition}\label{prop-visconv}
Let $g_{i}\in \pi_1(M)$ be a sequence of transformations and $S\subset \widetilde M$ be closed and convex. Assume that $g_{i}S$ has a limit point $y\in \widetilde M$. Consider $B_\infty = \lim D(g_{i}S) = \lim \rho(g_i) D(S)$ the limit set in the developing map.  Then $B_\infty$ is closed, convex and there exists a closed and  convex subset $S_\infty$  containing $y$ such that $D(S_\infty)=B_\infty$.
\end{proposition}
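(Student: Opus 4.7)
The plan is to handle the situation in three steps: first analyze $B_\infty$ intrinsically in $\R^n$, then build $S_\infty$ by lifting segments from $y$, and finally verify the two properties asked of it.

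First I would show that $B_\infty$ is closed and convex in $\R^n$. Closedness is automatic for the limit set of a sequence of subsets. For convexity, I use that each $\rho(g_i)D(S)$ is convex (affine image of the convex set $D(S)$): given $a = \lim a_i$ and $b = \lim b_i$ with $a_i, b_i \in \rho(g_i)D(S)$, the segment $[a_i, b_i]$ lies in $\rho(g_i)D(S)$, so every convex combination $(1-t)a+tb$ is the limit of $(1-t)a_i+tb_i \in \rho(g_i)D(S)$, hence in $B_\infty$.

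Second, I construct $S_\infty$ as a star at $y$ that lifts $B_\infty$. Choose $y_i \in g_iS$ with $y_i \to y$. For each $b \in B_\infty$ pick $b_i \in \rho(g_i)D(S)$ with $b_i \to b$. Because $g_iS$ is convex and $D$ is injective on it (preceding proposition), the affine segment $[D(y_i), b_i]$, which lies in $\rho(g_i)D(S)$, lifts uniquely to a geodesic segment $\sigma_i^b \colon \iI \to g_iS$ based at $y_i$. On a small convex neighborhood $U$ of $y$, $D|_U$ is a diffeomorphism onto a convex open set in $\R^n$; since $y_i \in U$ for large $i$ and $D(\sigma_i^b(t)) \to (1-t)D(y)+tb$ uniformly on $\iI$, the local lift along $U$ determines the initial portion of a limit $\sigma^b$ from $y$. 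A standard open-and-closed argument on the set of $t\in\iI$ along which the limit is defined (openness by local lifting, closedness because the $\sigma_i^b$ stay at uniformly bounded distance in $\widetilde M$ by virtue of agreeing with $\sigma^b$ on a prior compact piece) yields a geodesic segment $\sigma^b \colon \iI \to \widetilde M$ based at $y$ with $D(\sigma^b(t)) = (1-t)D(y)+tb$. I then set
\[
S_\infty := \bigcup_{b \in B_\infty} \sigma^b(\iI).
\]

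Third, I verify the properties. Each $\sigma^b$ is a geodesic segment issuing from $y$, so $S_\infty \subset \exp_y(V_p)$, on which $D$ is injective by the preceding proposition. By convexity of $B_\infty$, the image $D(\sigma^b(t)) = (1-t)D(y)+tb$ lies in $B_\infty$, so $D(S_\infty)=B_\infty$; hence $D$ restricts to a bijection of $S_\infty$ onto the convex set $B_\infty$, which is the defining property of convexity for $S_\infty$. For closedness, suppose $z_n\in S_\infty$ converges to $z\in\widetilde M$; then $D(z_n)\to D(z)\in B_\infty$ (closed), and the lifted segments $\sigma^{D(z_n)}$ converge by continuous dependence on the endpoint to $\sigma^{D(z)}$, giving $z=\sigma^{D(z)}(1)\in S_\infty$.

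The main obstacle is the global lifting step. A priori $\widetilde M$ is not geodesically complete, so one must exclude that the target geodesic from $y$ escapes before reaching $b$; this is precisely where I use that the full approximating segments $\sigma_i^b$ are defined on $\iI$ and converge uniformly under $D$, together with the choice $y_i\to y$, to propagate the local lift across all of $\iI$.
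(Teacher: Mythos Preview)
Your argument is essentially the paper's, reorganized. The paper takes $S_\infty=\lim g_iS$ directly as a limit set in $\widetilde M$ and then shows $D(S_\infty)=B_\infty$ via exactly your geodesic-limit step; you instead build $S_\infty$ as the star of lifted segments from $y$. Since both candidates lie in $\exp_y(V_y)$, on which $D$ is injective, and both have image $B_\infty$, they coincide. The convexity and closedness of $B_\infty$, and the final verification that $D(S_\infty)=B_\infty$, are handled identically.

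One point to tighten: your justification for the closed half of the open-and-closed lifting argument (``the $\sigma_i^b$ stay at uniformly bounded distance in $\widetilde M$ by virtue of agreeing with $\sigma^b$ on a prior compact piece'') is not quite enough as stated. Agreement on $[0,T-\epsilon]$ does not by itself bound $\sigma_i^b$ on $[T-\epsilon,T]$: the length of that tail in $\widetilde M$ is controlled by the developed length only up to the local comparison of metrics, which is not uniform without further input. What actually closes the argument is compactness of $M$: there is a uniform $r_0>0$ such that every point of $\widetilde M$ has a convex neighborhood developed by $D$ onto a Euclidean ball of radius $r_0$. Choosing $s<T$ with $(T-s)\,\lvert b-D(y)\rvert<r_0/2$, the tail $D(\sigma_i^b([s,T]))$ eventually lies in that ball around $D(\sigma^b(s))$, so $\sigma_i^b([s,T])$ stays in the corresponding chart and converges there. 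The paper is equally terse on this step (``each $\gamma_i$ is completely visible, therefore $\lim\gamma_i$ is a visible geodesic from $y$''), so you are not missing anything the paper supplies; just be aware that the step genuinely uses that $M$ is closed.
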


\begin{proof}
Observe that by convexity, each $D(g_i S)$ is closed and convex. Therefore $B_\infty$ is again closed.
It is also convex. If two points $x_1,x_2$ belong to the limit set $B_\infty$ then we consider their associated converging sequences. We can form geodesics in each $g_i S$ that joins the points from each sequence. The limit of those geodesics exists, is geodesic and connects $x_1$ and $x_2$.

Similarly, $S_\infty=\lim g_i S$ is closed and convex.
We show its developing image covers $B_\infty$. Let $y_i\in g_i S$ be a sequence tending to $y$. Let $z\in B_\infty$ be the limit of $D(z_i)$ for $z_i\in g_i S$. The geodesics $\gamma_i$ from $y_i$ to $z_i$ are developed into $D(\gamma_i)$ and those tend to the geodesic from $D(y)$ to $D(z)$. This geodesic is compact (defined on $\iI$) and each $\gamma_i$ is completely visible. Therefore $\lim \gamma_i$ corresponds to a visible geodesic issued from $y$. It necessarily ends at $z$ by injectivity.
\end{proof}

\section{Fried dynamics}\label{sec-3}

Let $M$ be an incomplete closed affine manifold. Choose any metric on $M$ compatible with its topology.
Denote
 $\pi\colon \widetilde M \to M$ its universal cover.
If $x\in\widetilde M$ and $\gamma\subset \widetilde M$ is a geodesic issued from $x$, incomplete at $t=1$, then the projection of $\gamma$ in $M$ is an open curve without any continuous completion at $t=1$.
Since $M$ is closed, the projection $\pi(\gamma)$ has a recurrent point $y\in M$.

Let $U\subset M$ be a compact neighborhood of $y$, convex and trivializing the universal cover.
For the choice of a decreasing sequence $\epsilon_i \to 0$ we can define $t_i$ the time such that $\pi(\gamma(t_i))$ belongs to $U$ and is at distance at most $\epsilon_i$  to $y\in M$. We ask that $\pi(\gamma)$ exits $U$ between the times $t_i$ and $t_{i+1}$. We have that $t_i\to 1$ since the geodesic is incomplete at $t=1$.

We use the trivialization of the universal cover by $U$. It provides $U_i\subset \widetilde M$ such that $\gamma(t_i)\in U_i$. Let $y_i\in U_i\subset \widetilde M$ be the lifts of $y\in U\subset M$.
Every $U_i$ is again a compact convex neighborhood of $y_i$.
Through the developing map $D\colon \widetilde M \to \R^n$, each $D(U_i)$ is compact, convex and they accumulate along the compactification $\overline{D(\gamma)}$ of $D(\gamma)$ at  $t=1$.
Since the intersections of $\gamma$ with the $U_i$'s are transverse and disjoint along $\gamma$, the $D(U_i)$'s intersect transversally and disjointly $D(\gamma)$.

\begin{definition}
Let $\gamma\subset \widetilde M$ be an incomplete geodesic at $t=1$. Let $y\in M$ be a recurrent point of its projection into $M$. Let $U$ be a convex, compact, trivializing neighborhood of $y$. Let $\epsilon_i\to 0$ be decreasing and let
$t_i\to 1$ be an associated sequence of times such that  (for the lifts $y_i\in U_i$ of $y\in U$) we have $\gamma(t_i)\in U_i$ and the distance between $\pi(\gamma(t_i))$ and $y$ is lesser than $\epsilon_i$.
Define $g_{ji}\in \pi_1(M,y)$ the transformations of $\widetilde M$ verifiying
\begin{equation}
g_{ji}(U_i) = U_j.
\end{equation}
Those data define a \emph{Fried dynamics}.
\end{definition}

Note that a subsequence of the  times $\{t_i\}$ (or the distances $\{\epsilon_i\}$) corresponds univocally to a subsequence of the pairs  $\{y_i\in U_i\}$.

The transformations $g_{ji}$ verify a cocycle property: 
\begin{equation}
g_{ki} = g_{kj}g_{ji}.
\end{equation}
For the affine geometry $(G,\R^n)$ considered, denote by $T_{ji}$ the corresponding transformations by the holonomy morphism $T_{ji}=\rho(g_{ji})\in G$.

\begin{figure}[ht]
\centering
\includegraphics[width=0.75\textwidth]{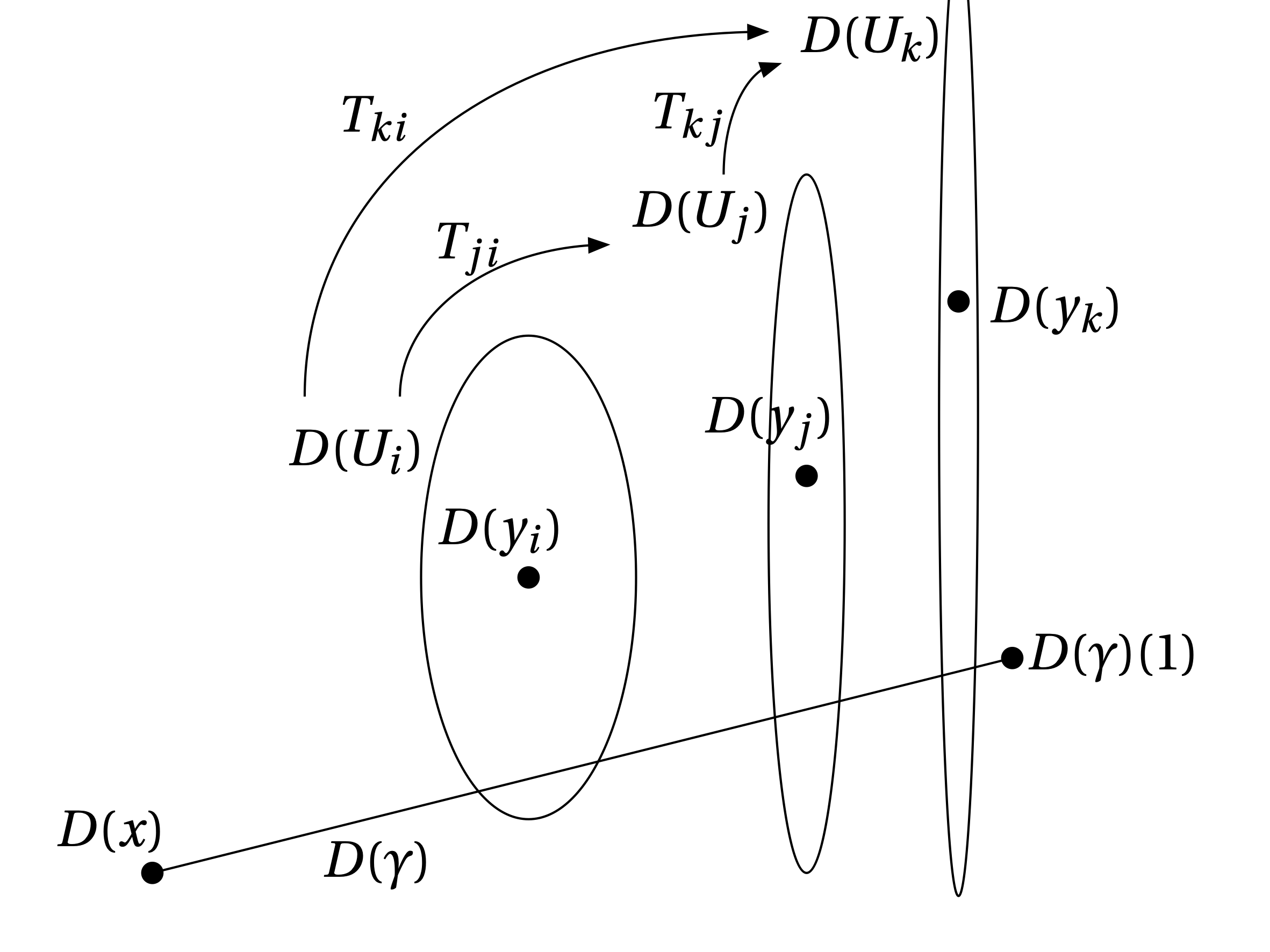}
\caption{A Fried dynamics.}
\end{figure}

\paragraph{Idea of the proof of theorem~\ref{thm-comp}}To prove theorem~\ref{thm-comp} we use a strategy comparable to what Fried~\cite{Fried} employed.

 First, we study  Fried dynamics. It is the most important step. Later we will introduce a convex subset $S$ (with smooth boundary) that contains the invisible geodesic $\gamma$. The goal of this first step will be to be able to describe what is the shape of $T_{ji}^{-1}(S)$ when $j\gg i \to \infty$. This study is independent of the rank of the ray geometry.

Then, by using the convexity argument in proposition~\ref{prop-visconv}, we show how to construct from $S$ a half-space of $\R^n$ that is completely visible. This step will depend on the fact that the ray geometry has rank $1$.

Finally, we will show how the different half-spaces obtained by varying $x$ and $\gamma$ can be combined in $D(\widetilde M)$. This will imply that the developing map avoids every invisible point, and therefore is a covering onto its image.

\vskip10pt
Now we return to the study of the Fried dynamics.
We  assume that $(G,\R^n)$ is a ray geometry.
We have a basis $(e_1,\dots,e_n)$ of $\R^n$ such that $A\subset KA\subset G$ acts diagonally. 
Once a base point of $\R^n$ is chosen, we can express any $T_{ji}\in G$ by 
\begin{equation}
T_{ji}(x) = c_{ji} + f_{ji}(x),
\end{equation}
with $c_{ji}\in \R^n$ and $f_{ji}\in KA$.
Recall that a change of the base point  is expressed by
\begin{equation}
 T_{ji}(x)=T_{ji}(y-y+x)  = (c_{ji} + f_{ji}(y)) + f_{ji}(-y+x).
\end{equation}
It preserves the linear part $f_{ji}$.
Decompose each $f_{ji}$ into 
\begin{equation}
f_{ji} = f_{ji,K}f_{ji,A}
\end{equation}
with $f_{ji,K}\in K$ et $f_{ji,A}\in A$. 
Recall that $K$ centralizes $A$, so both factors commute. The cocycle property on $T_{ji}$ implies that each factor also verifies the cocycle relation:
\begin{equation}
f_{ki,K} = f_{kj,K}f_{ji,K}, \; f_{ki,A} = f_{kj,A}f_{ji,A}
\end{equation}

\begin{lemma}
Up to  a subsequence of $\{y_i\in U_i\}$, we have
\begin{equation}
\lim_{j\to \infty}\lim_{i\to \infty} f_{ji,K} = \id.
\end{equation}
Let $e_q$ be a basis vector. Denote by $\beta_{ji,q}$ the diagonal element of $f_{ji,A}\in A$ for the direction $e_q$. Up to  a subsequence of $\{y_i\in U_i\}$, we have
\begin{equation}
\lim_{j\to \infty}\lim_{i\to \infty}  \beta_{ji,q} = \omega_q\in \{0,1,\infty\}.
\end{equation}
\end{lemma}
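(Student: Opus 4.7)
The plan is to reduce the statement to the cocycle relation already displayed, and then exploit compactness. Fix a base index, say $i_{0}=1$, and rewrite the cocycle $g_{ki}=g_{kj}g_{ji}$ with $k=j$ and $i_{0}$ in the middle position: this gives $g_{j1}=g_{ji}g_{i1}$, hence $g_{ji}=g_{j1}g_{i1}^{-1}$. Applying the holonomy, $T_{ji}=T_{j1}T_{i1}^{-1}$, and on the linear parts $f_{ji}=f_{j1}f_{i1}^{-1}$. Because $K$ centralises $A$ and $K\cap A=\{\id\}$ (so that the decomposition $f=f_K f_A$ is unique), the factors multiply separately:
\begin{equation}
f_{ji,K}=f_{j1,K}\,f_{i1,K}^{-1},\qquad f_{ji,A}=f_{j1,A}\,f_{i1,A}^{-1},
\end{equation}
and componentwise $\beta_{ji,q}=\beta_{j1,q}/\beta_{i1,q}$.

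Next, I would use sequential compactness to extract a single subsequence of $\{y_i\in U_i\}$ on which every relevant quantity converges. The group $K$ is compact, and each factor $[0,+\infty]$ is compact; since there are only finitely many basis directions, a diagonal extraction yields a subsequence (which we relabel) such that
\begin{equation}
f_{i1,K}\xrightarrow[i\to\infty]{}k_{\infty}\in K,\qquad \beta_{i1,q}\xrightarrow[i\to\infty]{}\omega_{q}\in[0,+\infty]
\end{equation}
for every $q\in\{1,\dots,n\}$.

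Now I would compute the iterated limits along this subsequence. For the compact part, fixing $j$ and letting $i\to\infty$ gives $f_{ji,K}\to f_{j1,K}\,k_{\infty}^{-1}$, and then letting $j\to\infty$ along the same extracted sequence gives $k_{\infty}\,k_{\infty}^{-1}=\id$. For each diagonal direction I split into three cases. If $\omega_{q}\in(0,+\infty)$, the inner limit of $\beta_{ji,q}=\beta_{j1,q}/\beta_{i1,q}$ is $\beta_{j1,q}/\omega_{q}$, and the outer limit equals $\omega_{q}/\omega_{q}=1$. If $\omega_{q}=0$, then $\beta_{j1,q}$ is a fixed positive real for each $j$ while the denominator tends to $0^{+}$, so the inner limit is $+\infty$, hence the outer limit is $+\infty$. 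Symmetrically, if $\omega_{q}=+\infty$, the inner limit is $0$ and so is the outer. In every case $\omega_{q}\in\{0,1,+\infty\}$ and the claim follows.

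The only real subtlety, more than an obstacle, is the bookkeeping of subsequences: the outer limit in $j$ must be taken along the \emph{same} subsequence used to force $f_{i1,K}\to k_{\infty}$ and $\beta_{i1,q}\to\omega_{q}$, so that the outer $j$-limit recovers the same limits $k_{\infty}$ and $\omega_{q}$. The finiteness of the number of basis vectors makes a single diagonal extraction work for all directions at once, and the cocycle identity then reduces the double limit to a purely algebraic computation.
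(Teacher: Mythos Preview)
Your argument is correct and uses the same two ingredients as the paper---compactness of $K$ (resp.\ of $[0,+\infty]$) to extract convergent subsequences, and the cocycle relation to pin down the value of the limit. The presentation differs slightly: the paper passes to a subsequence on which the double limit $\lim_{j}\lim_{i}f_{ji,K}=L$ exists, and then reads the cocycle $f_{ki,K}=f_{kj,K}f_{ji,K}$ with $k\gg j\gg i$ to obtain the idempotent relation $L^{2}=L$ (and likewise $\omega_q^{2}=\omega_q$ in $[0,+\infty]$). You instead anchor everything at a base index and write $f_{ji,K}=f_{j1,K}f_{i1,K}^{-1}$, which turns the double limit into an explicit telescoping computation. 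Your route is a bit more concrete and makes the existence of the iterated limit transparent; the paper's route is shorter but leaves the reader to unpack why the cocycle yields $L^{2}=L$.

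One small notational remark: you introduce $\omega_q$ as $\lim_i\beta_{i1,q}$, whereas the lemma reserves $\omega_q$ for the iterated limit $\lim_j\lim_i\beta_{ji,q}$. Your case analysis actually computes the latter (it equals $1$ when your $\omega_q\in(0,\infty)$, and $+\infty$ resp.\ $0$ when your $\omega_q$ is $0$ resp.\ $+\infty$), so the concluding sentence ``in every case $\omega_q\in\{0,1,+\infty\}$'' should refer to the iterated limit, not to your auxiliary $\lim_i\beta_{i1,q}$. This is only a labeling issue; the mathematics is fine.
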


\begin{proof}
Since $K\subset G$ is compact, we can suppose up to a subsequence of $\{y_i\in U_i\}$ that $f_{ji,K}\to L\in K$ when $j\gg i \to \infty$. But then the cocycle property implies $L^2=L$ that can only be verified for $L=\id$. By an analogous argument, with the compactification of $\R_+$ into $\R_+\cup\{\infty\}$ we have a limit $\beta_{ji,q}\to \omega_q$ that verifies $\omega_q^2=\omega_q$, and it can only be true if $\omega_q\in \{0,1,\infty\}$.
\end{proof}

It should be noted that if
$\beta_{kj,q}\to \omega_q$ when $k\gg j\to \infty$, then we can obtain an information on how $\beta_{ki,q}$ can evolve when $i$ is fixed and $k\to \infty$. Indeed, we have
\begin{equation}
\lim _{k\to \infty}\beta_{ki,q} = \lim_{k\to\infty }\lim_{j\to\infty} \beta_{ki,q} = \lim_{k\to \infty}\lim_{j\to\infty} \beta_{kj,q}\beta_{ji,q}= \omega_q \lim_{j\to \infty}\beta_{ji,q}.
\end{equation}
If $\beta_{ki,q}\to r$ when $k \to \infty$ (up to  a subsequence) then $r=\omega_qr$. If $\omega_q\in\{0,\infty\}$ it implies $r=\omega_q$.

\begin{lemma}\label{lem-valprop}
Let $i>0$ fixed.  If $\beta_{ki,q}\to r$  and $\omega_q=1$ then $r$ must be a (finite) real positive number.
\end{lemma}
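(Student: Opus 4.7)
The plan is to exploit the multiplicative cocycle $\beta_{ki,q} = \beta_{kj,q}\beta_{ji,q}$ in solved form,
\begin{equation}
\beta_{kj,q} = \frac{\beta_{ki,q}}{\beta_{ji,q}},
\end{equation}
and to derive a contradiction when $r$ equals $0$ or $\infty$ using the hypothesis $\omega_q = 1$. Note that the multiplicative relation $r = \omega_q \lim_{j}\beta_{ji,q}$ displayed just above the lemma becomes tautological when $\omega_q = 1$, so a genuinely different argument is needed here.

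Suppose first, for contradiction, that $r = 0$. With $i$ fixed as in the statement, I would take the subsequence of $k$ along which $\beta_{ki,q} \to 0$ and then refine once more, diagonally in $j$, so that the inner limit $\lim_{k\to\infty}\beta_{kj,q}$ exists in the compactified interval $[0,\infty]$ for every $j$. Since $\beta_{ji,q}$ is a strictly positive finite real for the fixed $i$ and each fixed $j$, the solved cocycle forces
\begin{equation}
\lim_{k\to\infty}\beta_{kj,q} = \frac{0}{\beta_{ji,q}} = 0
\end{equation}
for every such $j$. On the other hand, the hypothesis $\omega_q = 1$ means that in the regime $k \gg j \to \infty$ one has $\beta_{kj,q} \to 1$: in particular there exists $J$ such that for every $j \geq J$ one can find arbitrarily large $k$ with $\beta_{kj,q} \geq 1/2$. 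Picking any fixed $j \geq J$ produces the desired contradiction with the inner limit above being $0$.

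The case $r = \infty$ is strictly symmetric: the same manipulation gives $\lim_{k\to\infty}\beta_{kj,q} = \infty$ for every fixed $j$, which again contradicts the existence, guaranteed by $\omega_q = 1$, of arbitrarily large $k$ for which $\beta_{kj,q} \leq 2$. Hence $r$ cannot lie in $\{0,\infty\}$, so $r \in (0,\infty)$ as claimed.

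The only delicate point I anticipate is the bookkeeping of nested subsequences: one must refine $\{y_i \in U_i\}$ once more so that the inner limits $\lim_{k\to\infty}\beta_{kj,q}$ all exist in $[0,\infty]$ and so that the diagonal regime $k \gg j \to \infty$ realising $\omega_q = 1$ is simultaneously compatible with the subsequence chosen for $\beta_{ki,q} \to r$. Once these extractions are set up in the same style as the previous lemma, the contradiction is immediate.
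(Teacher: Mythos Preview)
Your proposal is correct and follows essentially the same route as the paper: both arguments solve the cocycle as $\beta_{kj,q}=\beta_{ki,q}/\beta_{ji,q}$ and use it to show that $r\in\{0,\infty\}$ forces $\beta_{kj,q}$ to drift to $0$ or $\infty$, contradicting $\omega_q=1$. The paper packages this as a contrapositive (if $r=0$ then $\omega_q=0$) via the inequality $\beta_{ki,q}<\beta_{ji,q}^2$ to obtain $\beta_{kj,q}<\beta_{ji,q}\to 0$, whereas you compute the inner limit $\lim_k\beta_{kj,q}=0$ directly for each fixed $j$; this is only a cosmetic difference.

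One small remark: your final paragraph about ``delicate bookkeeping of nested subsequences'' is unnecessary. No further extraction is needed to make the inner limits $\lim_k\beta_{kj,q}$ exist, because the cocycle identity $\beta_{kj,q}=\beta_{ki,q}/\beta_{ji,q}$ together with the standing hypothesis $\beta_{ki,q}\to r$ already forces $\lim_k\beta_{kj,q}=r/\beta_{ji,q}$ for every $j$. So the contradiction with $\omega_q=1$ is immediate once you have step one.
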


\begin{proof}
Assume that $r=0$, we show that $\omega_q=0$. For any $j>i$ large enough, there exists  $k>j$ such that $\beta_{ki,q}<\beta_{ji,q}^2<1$.  It implies that $\beta_{kj,q} = \beta_{ki,q} \beta_{ji,q}^{-1} < \beta_{ji,q}<1$. Therefore $\beta_{kj,q}$ can only tend to $0=\omega_q$. An analogous argument shows that if $r=\infty$ then $\omega_q=\infty$. 
\end{proof}

\begin{proposition}
There exists a subsequence of $\{y_i\in U_i\}$ such that:
\begin{itemize}
\item for $i>0$ fixed, $f_{ji,K}$ converges;
\item for  $i>0$ fixed, the sequence $\{\beta_{ji,q}\}$ is monotonic or constant for $j>i$.
\end{itemize}
\end{proposition}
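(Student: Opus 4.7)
The plan is a standard diagonal extraction argument combining the compactness of $K$ with the classical fact that every sequence in a compact interval of $\R$ admits a monotonic subsequence. I will apply this simultaneously to the $K$-factor and to each of the (finitely many) diagonal coefficients $\beta_{ji,q}$, and then diagonalize over $i$.

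First, I fix $i>0$ and work with the sequence $(f_{ji,K},\beta_{ji,1},\dots,\beta_{ji,n})$ indexed by $j>i$. Since $K$ is compact, there is a subsequence of $j$'s along which $f_{ji,K}$ converges in $K$. Independently, for each coordinate $q\in\{1,\dots,n\}$, the positive real sequence $\beta_{ji,q}$ can be viewed in the compactification $[0,\infty]$, and there any sequence has a monotonic (hence convergent in $[0,\infty]$) subsequence. Since $n$ is finite, iterating gives a single subsequence of indices $j$ for which $f_{ji,K}$ converges in $K$ and each $\beta_{ji,q}$ is monotonic (or constant) in $j$.

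Second, I diagonalize over $i$. Let $i_1$ be the first index of the original sequence and apply the previous step to obtain an infinite set $J_1$ of indices $j>i_1$ with the desired properties relative to $i_1$. Choose $i_2$ to be the smallest element of $J_1$, and apply the step again with $i=i_2$, restricting the extraction to $j \in J_1$, to obtain an infinite $J_2\subset J_1$ with the desired properties relative to $i_2$. Iterating produces a strictly increasing sequence $i_1<i_2<\cdots$ and a nested chain $J_1\supset J_2\supset\cdots$ with $i_{k+1}\in J_k$.

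Finally, the diagonal subsequence $(y_{i_k})_k$ is the one sought. For any fixed $i_k$, the tail $(i_\ell)_{\ell>k}$ lies in $J_k$, and $J_k$ was built precisely so that $f_{j,i_k,K}$ converges in $K$ and each $\beta_{j,i_k,q}$ is monotonic or constant along $j\in J_k$. Thus both properties hold in the diagonal subsequence for every fixed $i=i_k$, as required. There is no real obstacle here: the only minor subtlety is to work in the compactification $[0,\infty]$ so that the monotonic-subsequence principle applies uniformly, with the value $0$ or $\infty$ allowed as a limit in the degenerate cases already encountered in the previous lemma.
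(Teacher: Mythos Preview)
Your argument is correct, but it takes a different route from the paper's. The paper avoids the diagonal extraction entirely by exploiting the cocycle relation: one extracts a subsequence \emph{once}, for a single fixed index $i_0$, so that $f_{ji_0,K}$ converges and each $\beta_{ji_0,q}$ is monotonic in $j$; then for any other $i>i_0$ in that subsequence one has
\[
f_{ji,K}=f_{ji_0,K}\,f_{ii_0,K}^{-1},\qquad \beta_{ji,q}=\beta_{ji_0,q}\,\beta_{ii_0,q}^{-1},
\]
and since the second factor in each expression is a constant independent of $j$, convergence and monotonicity are inherited automatically for every $i$. Your diagonal method is more general (it would apply to any doubly-indexed family without an underlying cocycle), while the paper's argument is shorter and makes transparent that the limits for different $i$ differ only by the fixed factors $f_{ii_0,K}^{-1}$ and $\beta_{ii_0,q}^{-1}$, a relation that is convenient in what follows.
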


Once this proposition is proven, we will assume that we chose such a subsequence of $\{y_i\in U_i\}$.

\begin{proof}
For the first property, by compacity of $K$, we can assume that $f_{ji,K}$ converges since it must have an accumulation point in $K$.

For the last property, it can be done for a single $i_0>0$ up to  a subsequence of $j>i_0$. 
Now let $i>i_0$. Then $\beta_{ji,q} = \beta_{ji_0,q}\beta_{ii_0,q}^{-1}$ and  it must be again monotonic or constant.
\end{proof}

Note that the last property implies that $\beta_{ji,q}\to r$  and therefore the preceding discussion applies.

\begin{definition}
We define a linear decomposition $E\oplus P \oplus F$ of $\R^n$ by deciding that $e_q\in E$ if $\omega_q=0$, $e_q\in P$ if $\omega_q=1$ and $e_q\in F$ if $\omega_q=\infty$.
\end{definition}

Since $A$ acts diagonally and $K$ centralizes $A$, we have that $f_{ji}$ preserve the decomposition $E\oplus P \oplus F$. 

Note that $\dim E>0$ since $D(U_i)$ accumulates disjointly on $\overline{D(\gamma)}$. The other subspaces might be reduced to $\{0\}$.

If we chose any base point $p$, it gives three affine subspaces based at $p$:
\begin{equation}
\forall L\in \{E,F,P\} \forall p\in \R^n, \; L|_p \coloneqq p + \exp(L).
\end{equation}

\vskip10pt
We come back to $M$ and discuss the choice of $U\subset M$.
Choose $V\subset U$ a smaller neighborhood of $y$. Since we chose distances $\epsilon_i\to 0$, for $i\geq i_0$ large enough, every $\pi(\gamma(t_i))$ will also belong to $V$. Note that we can lift $V\subset M$ into $V_i\subset U_i\subset \widetilde M$.

\begin{lemma}\label{lem-geoy}
For any $i>0$, $g_{ji}^{-1}\gamma$ has $y_i$ for limit point when $j\to \infty$.
\end{lemma}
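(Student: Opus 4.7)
The plan is to chase the definitions: $g_{ji}$ is the unique deck transformation sending $U_i$ onto $U_j$, and since $y_i\in U_i$ and $y_j\in U_j$ are both defined as the lifts of the single point $y\in U$ under the trivialization, one automatically has $g_{ji}(y_i)=y_j$, equivalently $g_{ji}^{-1}(y_j)=y_i$. So I want to exhibit a sequence of points lying on $g_{ji}^{-1}\gamma$ that converges to $y_i$, and the obvious candidate is $g_{ji}^{-1}(\gamma(t_j))$.

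First I would observe that $\gamma(t_j)\in U_j$ by construction of $t_j$, hence $g_{ji}^{-1}(\gamma(t_j))\in g_{ji}^{-1}(U_j)=U_i$. Moreover, both $g_{ji}^{-1}(\gamma(t_j))$ and $y_i$ sit in $U_i$, on which the projection $\pi\colon U_i\to U$ is a homeomorphism (the trivialization). Under this homeomorphism, $g_{ji}^{-1}(\gamma(t_j))$ projects to $\pi(\gamma(t_j))$ and $y_i$ projects to $y$, and by the defining property of $t_j$ in the Fried dynamics the distance in $M$ between these two projections is at most $\epsilon_j$.

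Then letting $j\to\infty$, the hypothesis $\epsilon_j\to 0$ gives $\pi(\gamma(t_j))\to y$ in $M$. Since $\pi|_{U_i}$ is a homeomorphism (and $U_i$ is compact, so the inverse is uniformly continuous on $U$), this lifts to $g_{ji}^{-1}(\gamma(t_j))\to y_i$ in $\widetilde M$. This exhibits $y_i$ as a limit point of the family of curves $\{g_{ji}^{-1}\gamma\}_{j>i}$, which is precisely what the lemma claims.

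I do not expect any real obstacle: the statement is essentially a bookkeeping consequence of the choice that $y_i$ and $y_j$ are corresponding lifts of the same recurrent point $y$ and that $g_{ji}$ is the deck transformation matching their trivializing neighborhoods. The only mildly delicate point is to make sure we are using the trivialization to transfer convergence from $M$ to $\widetilde M$ rather than some auxiliary lifted metric; phrasing everything through $\pi|_{U_i}$ avoids any ambiguity.
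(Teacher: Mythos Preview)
Your proof is correct and is essentially the same argument as the paper's. The paper phrases it via neighborhoods (any $V_i\ni y_i$ corresponds to $V\subset U$, and for $j$ large $V_j$ meets $\gamma$, hence $V_i=g_{ji}^{-1}V_j$ meets $g_{ji}^{-1}\gamma$), while you exhibit the explicit convergent sequence $g_{ji}^{-1}(\gamma(t_j))\to y_i$ by pushing the $\epsilon_j$-closeness through the homeomorphism $\pi|_{U_i}$; these are two equivalent formulations of the same observation.
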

\begin{proof}
Let $i>0$. Choose any neighborhood $V_i$ of $y_i$ contained in $U_i$. Then $V_i$ corresponds to a neighborhood $V\subset U$ of $y$ in $M$. Therefore for $j$ large enough, $V_j$ intersects $\gamma$ and therefore $V_i =g_{ji}^{-1}V_j$ intersects $g_{ji}^{-1}\gamma$.
\end{proof}

\begin{definition}
In $M$, let $U_1 = U$. Define for a sequence of $\{0<r<1\}$ tending to $0$ a sequence $\{U_r\}$ of compact convex neighborhoods  of $y\in M$. Assume that $U_r$ is decreasing for the inclusion and that $U_r\to \{y\}$ when $r\to 0$. In $\widetilde M$, define $U_{1,i}=U_i$ and  $U_{r,i}$ the lift of $U_r$ such that $U_{r,i}\subset U_{1,i}$. In $D(\widetilde M)$, define
\begin{equation}
C_{r,i}  = D(U_{r,i}).
\end{equation}
\end{definition}

Recall that $U_1=U$ is a convex, compact  and trivializing neighborhood of $y\in M$.
Every $C_{r,i}$ is convex and compact in $\R^n$ since $U_{r}$ is a convex, compact and trivializing neighborhood of $y$ in $M$. By construction:

\begin{lemma}
For any $i,j$ and $r\geq 0$, $g_{ji}U_{r,i} = U_{r,j}$ and by consequence $T_{ji}C_{r,i}=C_{r,j}$.\qed
\end{lemma}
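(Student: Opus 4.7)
The plan is to exploit the fact that $g_{ji}$ is a deck transformation of the universal cover $\pi\colon\widetilde M\to M$, together with the uniqueness of the lift of $U_r$ inside a fixed lift of $U_1$. First I would recall that $U_1=U$ was chosen to trivialize $\pi$, so the restriction $\pi\colon U_{1,i}\to U_1$ is a homeomorphism for every $i$. Since $U_r\subset U_1$ by construction, the preimage $\pi^{-1}(U_r)\cap U_{1,i}$ is exactly one sheet of $\pi^{-1}(U_r)$, namely $U_{r,i}$, and this is the unique lift of $U_r$ sitting inside $U_{1,i}$.

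Next I would invoke the defining property $g_{ji}(U_{1,i})=U_{1,j}$. Because $g_{ji}\in\pi_1(M)$ acts as a deck transformation, it commutes with $\pi$ and therefore preserves the full preimage $\pi^{-1}(U_r)$. Consequently
\begin{equation}
g_{ji}(U_{r,i})=g_{ji}\bigl(\pi^{-1}(U_r)\cap U_{1,i}\bigr)=\pi^{-1}(U_r)\cap g_{ji}(U_{1,i})=\pi^{-1}(U_r)\cap U_{1,j}=U_{r,j},
\end{equation}
where the last equality is the uniqueness observation applied to the index $j$.

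Finally, the identity $T_{ji}C_{r,i}=C_{r,j}$ follows immediately from the equivariance of the developing map: for any $\gamma\in\pi_1(M)$ and $x\in\widetilde M$ one has $D(\gamma\cdot x)=\rho(\gamma)D(x)$, so applying $D$ to the set equality $g_{ji}U_{r,i}=U_{r,j}$ yields $\rho(g_{ji})D(U_{r,i})=D(U_{r,j})$, i.e.\ $T_{ji}C_{r,i}=C_{r,j}$.

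There is no genuine obstacle here; the statement is essentially a bookkeeping lemma, and the only thing one needs to be careful about is that $U_1$ was chosen to be a trivializing neighborhood, which is what guarantees that the lift $U_{r,i}$ is well defined by the condition $U_{r,i}\subset U_{1,i}$. No additional dynamical input from the Fried dynamics is used at this step.
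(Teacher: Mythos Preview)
Your argument is correct and is exactly the unpacking of what the paper means by ``by construction'' (the statement is marked with an immediate \qed): the deck transformation $g_{ji}$ sends the unique lift of $U_r$ inside $U_{1,i}$ to the unique lift inside $U_{1,j}=g_{ji}(U_{1,i})$, and equivariance of $D$ gives the second identity. There is nothing to add.
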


As discussed before, since $U_r\subset U$ is a neighborhood of $y$, $D(\gamma)(1)$ is a limit point of $\{C_{r,i}\}$ since for $j>j_0$  large enough $D(\gamma(t_j))$ belongs to $C_{r,j}$.

\begin{proposition}
Let $r>0$. Any accumulation point of $\{C_{r,i}\}$ is a limit point.
\end{proposition}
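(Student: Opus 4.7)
The plan is to strengthen the subsequential convergence implied by ``accumulation'' into a full-sequence convergence (``limit''), using the equivariance $C_{r,i}=T_{i,1}(C_{r,1})$ together with the control on $T_{i,1}$ established in the preceding lemmas.

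First I would reduce to the centered sequence $\tilde C_{r,i}:=C_{r,i}-p_i$, where $p_i:=D(y_i)\in C_{r,i}$. The identity $T_{j,i}(p_i)=p_j$ (which follows from $g_{j,i}(y_i)=y_j$) gives $\tilde C_{r,i}=f_{i,1}(\tilde C_{r,1})$, and since $\pi(\gamma(t_i))\to y$ in $M$ forces $d(y_i,\gamma(t_i))\to 0$, one has $p_i\to \ell:=D(\gamma)(1)$. So an accumulation point $p$ of $\{C_{r,i}\}$ corresponds to an accumulation point $v:=p-\ell$ of $\{\tilde C_{r,i}\}$, and it suffices to produce a full sequence $\tilde x_i\in\tilde C_{r,i}$ with $\tilde x_i\to v$, since then $x_i:=p_i+\tilde x_i\in C_{r,i}$ tends to $p$.

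By the preceding proposition we have $f_{i,1,K}\to L\in K$ and each $\beta_{i,1,q}$ is monotonic, hence converges to some $r_q\in[0,\infty]$: $r_q=0$ on $E$, $r_q\in(0,\infty)$ on $P$ by Lemma~\ref{lem-valprop}, and $r_q=\infty$ on $F$. Writing the given subsequential realization $\tilde x_{i_k}=f_{i_k,1}(u_{i_k})$ with $u_{i_k}\in \tilde C_{r,1}$ and extracting a further subsequence with $u_{i_k}\to u_\infty\in \tilde C_{r,1}$, a coordinatewise analysis of $f_{i_k,1}(u_{i_k})\to v$ forces $v_E=0$ (the $E$-factor collapses), the vanishing of the $F$-component of $u_\infty$ (otherwise the $F$-coordinate would diverge), the identification of $v_P$ as the limit scaling of $u_{\infty,P}$, and exhibits $v_F$ as the finite limit of $\beta_{i_k,1,q}(u_{i_k}-u_\infty)_F$.

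To upgrade the subsequence to a full sequence, I would set $u_i:=u_\infty+\delta_i$ with $\delta_i$ supported on $F$ and of size $O(\beta_{i,1,q}^{-1})\to 0$, chosen so that $f_{i,1}(\delta_i)$ contributes exactly $v_F$ along $F$; then $f_{i,1}(u_i)\to v$. Since $\tilde C_{r,1}$ has nonempty interior (because $y$ is an interior point of $U_r$), $u_i$ stays inside $\tilde C_{r,1}$ for $i$ large, so $\tilde x_i:=f_{i,1}(u_i)$ is the required full sequence. The main technical obstacle is keeping $u_i$ inside $\tilde C_{r,1}$ when $u_\infty\in\partial\tilde C_{r,1}$; this is handled by first approximating $u_\infty$ by interior points of $\tilde C_{r,1}$ (using convexity and the ball around $0$ inside $\tilde C_{r,1}$), and then taking a diagonal limit over finer and finer interior approximations.
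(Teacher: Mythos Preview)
There is a genuine gap at the centering step. You claim that $p_i:=D(y_i)\to D(\gamma)(1)$ because $d(y_i,\gamma(t_i))\to 0$ in $\widetilde M$. But $D$ is only a local diffeomorphism, not a quasi-isometry: on $U_i$ the Jacobian of $D$ scales by the linear part $f_{i,1}$, and along the $F$-directions this blows up. So a distance $\epsilon_i$ in $\widetilde M$ can become a distance of order $\beta_{i,1,q}\,\epsilon_i$ in $\R^n$, which need not tend to $0$. The paper flags exactly this point when it remarks that for $r=0$ the sets $C_{0,i}=\{D(y_i)\}$ ``have no reason to accumulate to $D(\gamma)(1)$''. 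Once $p_i\not\to D(\gamma)(1)$, your reduction from $\{C_{r,i}\}$ to $\{\tilde C_{r,i}\}$ breaks down: an accumulation point $p$ of $\{C_{r,i}\}$ no longer corresponds to a single vector $v=p-\ell$, and the whole subsequent coordinate analysis loses its anchor.

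The fix is precisely what the paper does: center not at $D(y_i)$ but at a sequence $y_j\in C_{r,j}$ that you \emph{know} converges to $D(\gamma)(1)$ --- for instance $D(\gamma(t_j))$, which lies in $C_{r,j}$ for $j$ large since $\pi(\gamma(t_j))\in U_r$ eventually. With that correction the rest of your argument (monotone diagonal factors, convergent rotational part, then an explicit construction of the full sequence $u_i=u_\infty+\delta_i$) is a legitimate, more hands-on variant of the paper's one-line ``monotone in each direction, hence accumulation $=$ limit'' step.
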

\begin{proof}
Choose any base point in $\R^n$.
We know that $D(\gamma)(1)$ is a limit point. Let $y_j\in C_{r,j}$ be tending to $D(\gamma)(1)$. Each $y_j\in C_{r,j}$ can be written $c_{ji}+f_{ji}(x_{i,j})$ for $x_{i,j}\in C_{r,i}$. We can write any $z\in C_{r,i}$ as $x_{i,j} - x_{i,j}+z$ so that with $L_{r,i} = - x_{i,j} + C_{r,i}$ we have:
\begin{align}
C_{r,j}=T_{ji}(C_{r,i}) &= c_{ji} + f_{ji}(C_{r,i}) \\
& = (c_{ji} + f_{ji}(x_{i,j})) + f_{ji}(-x_{i,j}+ C_{r,i}) \\
&= y_j +  f_{ji}(L_{r,i}) .
\end{align}

Therefore, since $y_j$ converges,
an accumulation point $z_{\sigma(j)}\to z$ of the sequence $\{C_{r,j}\}$ corresponds to an accumulation point $w_{\sigma(j)} \to w$ of $\{f_{ji}(L_{r,i})\}$.
So we only need to prove that $\{f_{ji}(L_{r,i})\}$ has for limit set its set of accumulation points. Note that $0\in L_{r,i}$ and $L_{r,i}$ is convex and compact. (The point $0\in L_{r,i}$ corresponds in fact to $y_j$ tending to $D(\gamma)(1)$.)

Let $i>0$ and consider $j\geq j_0$. Then $f_{ji}$ has its rotational part that converges and its diagonal factors that converges by monotonic or constant values. Recall that $K$ centralizes $A$. So the action by the rotational part, $f_{ji,K}(L_{r,i})$, has a limit, say $L_K$.
Now, the diagonal action is monotonic (or constant) in each direction, therefore any accumulation point of $f_{ji,A}(L_K)$ is in fact a limit point.
\end{proof}

\begin{definition}
For any fixed value $r> 0$, let $C_{r,\infty}$ be the limit set  of the sequence $\{C_{r,i}\}$.
\end{definition}

Note that when $r=0$, each $C_{r,i}$ is reduced to $D(y_i)$. Those have no reason to accumulate to $D(\gamma)(1)$.  This is why we asked $r$ to be different from $0$.

\begin{definition}
Define 
\begin{equation}
C_{0,\infty} = \bigcap_{r>0} C_{r,\infty}.
\end{equation}
\end{definition}

This definition makes sense because the $C_{r,\infty}$ are decreasing for the inclusion when $r\to 0$.

\begin{lemma}
All the sets $C_{r,\infty}$ and $C_{0,\infty}$ are convex.
The set $C_{0,\infty}$ is nonempty (it contains $D(\gamma)(1)$) and is an affine subspace.
\end{lemma}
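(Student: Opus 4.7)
My proof plan has two stages. Stage one dispatches all claims except the affine-subspace property by short routine arguments. For the convexity of $C_{r,\infty}$, given limit points $p,q$ with approximating sequences $p_j,q_j\in C_{r,\sigma(j)}$, each convex combination $tp_j+(1-t)q_j$ lies in the convex set $C_{r,\sigma(j)}$ and converges to $tp+(1-t)q$, so $C_{r,\infty}$ is convex. Then $C_{0,\infty}$ is convex as an intersection of convex sets. For nonemptiness, the discussion immediately before the lemma gives $D(\gamma(t_j))\in C_{r,j}$ once $j$ is large; since $D(\gamma(t_j))\to D(\gamma)(1)$, we obtain $D(\gamma)(1)\in C_{r,\infty}$ for every $r>0$, hence $D(\gamma)(1)\in C_{0,\infty}$.

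Stage two shows that $C_{0,\infty}$ is an affine subspace; I expect it in fact to equal $D(\gamma)(1)+F$. The plan is to exploit the formula $C_{r,j}=y_j+f_{ji}(L_{r,i})$ from the previous proof together with the fact that $f_{ji}$ preserves the decomposition $\R^n=E\oplus P\oplus F$ (because $KA$ does, the rotational factor $f_{ji,K}$ tending to the identity). Writing $\beta_{ji,E}$, $\beta_{ji,P}$, $\beta_{ji,F}$ for the operator norms of $f_{ji,A}$ restricted to each summand, one has $\beta_{ji,E}\to 0$, $\beta_{ji,P}$ bounded away from $0$ and $\infty$, and $\beta_{ji,F}\to\infty$. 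A point $z\in C_{r,\infty}$ arises as $z=D(\gamma)(1)+\lim f_{ji}(w_j)$ for some $w_j\in L_{r,i}$; finiteness of the limit forces the $F$-component of $w_j$ to tend to $0$ at rate $\beta_{ji,F}^{-1}$, while the $P$-component stays in the compact projection $\pi_P(L_{r,i})$.

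For the inclusion $D(\gamma)(1)+F\subseteq C_{r,\infty}$, given $v\in F$ I would set $w_j=(f_{ji,A}|_F)^{-1}(v)$, which has norm of order $\beta_{ji,F}^{-1}\to 0$ and thus lies in $L_{r,i}$ for $j$ large (since $0$ is interior to $L_{r,i}$); then $f_{ji}(w_j)=f_{ji,K}(v)\to v$. For the reverse inclusion, any $p\in C_{0,\infty}$ has vanishing $E$-component (by the contraction $\beta_{ji,E}\to 0$), while its $P$-component lies in $\pi_P(L_{r,i})$ rescaled by a bounded factor; since $\pi_P(L_{r,i})$ shrinks to $\{0\}$ as $r\to 0$, this component also vanishes. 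The two inclusions yield $C_{0,\infty}=D(\gamma)(1)+F$, which is manifestly an affine subspace.

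The principal obstacle I foresee is justifying that $0$ lies in the interior of $L_{r,i}$; this reduces to $x_{i,j}$ being interior to $C_{r,i}$, which should follow from $\gamma(t_j)$ lying in the interior of $U_{r,j}$ via the relation $y_j=T_{ji}(x_{i,j})$ and the fact that $T_{ji}$ is an affine homeomorphism. A secondary subtlety is that $L_{r,i}$ implicitly depends on $j$ through $x_{i,j}$; passing to a subsequence along which $x_{i,j}$ converges as $j\to\infty$ produces a well-defined limiting $L_{r,i}$ on which the above arguments apply uniformly.
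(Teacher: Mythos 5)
Your stage one is fine and coincides with the paper's treatment of convexity and nonemptiness. For the affine-subspace claim you take a genuinely different route: the paper argues abstractly, showing that any maximal geodesic contained in the closed convex set $C_{0,\infty}$ can always be prolonged inside some $C_{r+\epsilon,i}$ and hence is a complete line, which forces $C_{0,\infty}$ to be an affine subspace; the identification $C_{0,\infty}=F|_{D(\gamma)(1)}$ is only established in the following lemma. You instead prove that identification directly from $C_{r,j}=y_j+f_{ji}(L_{r,i})$ and the asymptotics of $f_{ji,A}$ on $E\oplus P\oplus F$, which, if completed, would subsume both lemmas at once. Your reverse inclusion ($E$-component killed by $\beta_{ji,q}\to 0$ for fixed $r$, then the $P$-component killed as $r\to 0$ using boundedness of $\beta_{ji,q}$ on $P$ and the shrinking of $C_{r,i}$) is sound.

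The genuine gap is exactly at the step you flag, and your proposed repair does not close it. For $F|_{D(\gamma)(1)}\subseteq C_{r,\infty}$ you need $f_{ji}^{-1}(v)\in L_{r,i}=-x_{i,j}+C_{r,i}$ for all large $j$; knowing that each $x_{i,j}$ is an interior point of $C_{r,i}$ is not enough, since the distance from $x_{i,j}$ to $\partial C_{r,i}$ could a priori decay as fast as $\Vert f_{ji}^{-1}(v)\Vert$, and extracting a subsequence along which $x_{i,j}$ converges does not help if the limit lies on the boundary. What saves the argument is identifying the limit: taking $y_j=D(\gamma(t_j))$, one has $x_{i,j}=T_{ji}^{-1}(y_j)=D\bigl(g_{ji}^{-1}\gamma(t_j)\bigr)$, and since $\pi(\gamma(t_j))\to y$ inside the trivializing chart, $g_{ji}^{-1}\gamma(t_j)\to y_i$, hence $x_{i,j}\to D(y_i)$, which is interior to every $C_{r,i}$ ($r>0$) because each $U_r$ is a neighborhood of $y$. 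Then $x_{i,j}+f_{ji}^{-1}(v)$ eventually lies in a fixed ball around $D(y_i)$ contained in $C_{r,i}$ and the inclusion follows. A minor further correction: for fixed $i$ you may only assume $f_{ji,K}$ converges to some element of $K$, not to the identity (that is only the iterated limit $j\gg i\to\infty$), so choose $w_j=f_{ji}^{-1}(v)$ rather than $(f_{ji,A}|_F)^{-1}(v)$; then $f_{ji}(w_j)=v$ exactly and the rotational factor causes no trouble.
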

\begin{proof}
Because every $C_{r,j}$ is convex, so are the limits $C_{r,\infty}$ and $C_{0,\infty}$.
The fact that $D(\gamma)(1)\in C_{0,\infty}$ is clear. Now, observe that if $C_{0,\infty} = \{D(\gamma)(1)\}$ then the statement is true.

The set $C_{0,\infty}$ is closed and convex. To show that it is an affine subspace, we show that any maximal non-zero geodesic in $C_{0,\infty}$ is always open. Let $\gamma\subset C_{0,\infty}$. For any $r>0$ there exists $z_i\in C_{r,i}$ and $w_i\in C_{r,i}$ such that the geodesic $\eta_i$ from $z_i$ to $w_i$ tends to  a geodesic containing (or equal) to $\gamma$. But then $\eta_i$ is always contained in $C_{r+\epsilon,i}$ and can be extended to a strictly larger geodesic $\mu_i$. Then $\mu_i$ tends to a geodesic in  $C_{r+\epsilon,i}$ strictly containing $\gamma$. Because this can be done for any $\epsilon>0$ small enough, $\gamma$ is necessarily open if it is maximal.
\end{proof}

\begin{lemma}
Choose $D(\gamma)(1)$ as base point of $\R^n$.
\begin{align}
\forall r>0, \; C_{r,\infty}&\subset \left(P\oplus F\right)|_{D(\gamma)(1)}\\
C_{0,\infty}&= F|_{D(\gamma)(1)}.
\end{align}
\end{lemma}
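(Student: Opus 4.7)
Place the base point at $D(\gamma)(1)$ so that $D(\gamma)(1)=0$. Since $K$ centralizes $A$, its elements preserve the joint eigenspaces of $A$, and because each $\omega_q$ only depends on the tuple $(d_{1,q},\dots,d_{r,q})$ defining the diagonal coefficient at $e_q$, the subspaces $E$, $P$, $F$ are unions of joint eigenspaces; hence both $f_{ji,K}$ and $f_{ji,A}$ preserve the splitting $\R^n=E\oplus P\oplus F$. From the preceding proposition every $z\in C_{r,\infty}$ is of the form $\lim(y_j+f_{ji}(v_j))$ with $v_j\in L_{r,i}$, where $y_j\to 0$, $0\in L_{r,i}$, and $L_{r,i}$ is compact convex bounded by $\operatorname{diam}(C_{r,i})$.

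Decomposing $v_j=v_j^E+v_j^P+v_j^F$, the factor $f_{ji,A}$ crushes the $E$-part (eigenvalues $\to 0$), blows up the $F$-part (eigenvalues $\to\infty$), and has bounded action on $P$ (the limits are finite and positive by Lemma~\ref{lem-valprop}). For $f_{ji}(v_j)$ to converge, necessarily $v_j^F\to 0$ and the $E$-contribution vanishes, yielding $z^E=0$ and the first inclusion $C_{r,\infty}\subset (P\oplus F)|_{D(\gamma)(1)}$. Letting $r\to 0$, $\operatorname{diam}(L_{r,i})\leq\operatorname{diam}(C_{r,i})\to 0$ because $U_{r,i}$ shrinks to $\{y_i\}$; this forces $v_j^P\to 0$, and combined with the bounded eigenvalues on $P$, the $P$-contribution vanishes. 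Hence $z\in F|_{D(\gamma)(1)}$, proving $C_{0,\infty}\subset F|_{D(\gamma)(1)}$.

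For the reverse inclusion $F|_{D(\gamma)(1)}\subset C_{0,\infty}$, fix $r>0$ and $z\in F$. Specialize the sequence in the preceding proposition to $y_j:=D(\gamma(t_j))$, which belongs to $C_{r,j}$ for $j$ large (since $\pi(\gamma(t_j))\to y$ in $M$ and eventually lies in $U_r$), and tends to $0$ in $\R^n$. Applying Lemma~\ref{lem-geoy} to a shrinking sequence of neighborhoods of $y_i$ yields $x_{i,j}=D(g_{ji}^{-1}\gamma(t_j))\to D(y_i)$. Because $D$ is a local diffeomorphism and $y_i$ lies in the interior of $U_{r,i}$, the point $D(y_i)$ lies in the interior of $C_{r,i}$; hence $L_{r,i}$ contains a fixed ball $B(0,\eta)$ for $j$ large. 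Setting $v_j:=f_{ji}^{-1}(z)\in F$, the contraction of $F$ by $f_{ji,A}^{-1}$ gives $\|v_j\|\to 0$, so eventually $v_j\in B(0,\eta)\subset L_{r,i}$, whence $y_j+z=y_j+f_{ji}(v_j)\in C_{r,j}$. Taking $j\to\infty$ shows $z\in C_{r,\infty}$, and since this holds for every $r>0$, $z\in C_{0,\infty}$. The main obstacle is exactly this last step: without the control provided by Lemma~\ref{lem-geoy} pinning $x_{i,j}$ to the interior point $D(y_i)$, the small vectors $v_j\in F$ could fail to sit inside $L_{r,i}$, so the use of the incomplete geodesic through the cocycle $g_{ji}$ is essential.
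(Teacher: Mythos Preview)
Your proof is correct and shares the paper's strategy: use the asymptotic eigenvalue trichotomy on $E\oplus P\oplus F$ to see that $E$-coordinates are killed in the limit, $P$-coordinates stay bounded and hence vanish as $r\to 0$, and $F$-coordinates survive. The paper's own argument is a four-line sketch that only spells out the inclusions $C_{r,\infty}\subset P\oplus F$ and $C_{0,\infty}\subset F$; the reverse inclusion $F\subset C_{0,\infty}$ is left implicit in the phrase ``only $F$ remains.'' Your final paragraph fills this gap explicitly and correctly: by specializing to $y_j=D(\gamma(t_j))$ you force $x_{i,j}\to D(y_i)$ into the \emph{interior} of $C_{r,i}$, which yields a uniform ball $B(0,\eta)\subset L_{r,i}$ that eventually captures the shrinking preimages $f_{ji}^{-1}(z)\in F$. (As a minor remark, the appeal to Lemma~\ref{lem-geoy} is not really needed: the convergence $x_{i,j}=D(g_{ji}^{-1}\gamma(t_j))\to D(y_i)$ follows directly from $\pi(\gamma(t_j))\to y$ together with $g_{ji}^{-1}\gamma(t_j)\in U_i$ and the fact that $\pi|_{U_i}$ is a diffeomorphism onto $U$.)
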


\begin{proof}
Choose  a $C_{r,i}$ for $i$ large enough, so that $T_{ji}$ tends to a transformation with a linear part with almost no rotation and diagonal elements tending to $\omega\in\{0,1,\infty\}$. 
The limit $T_{ji}C_{r,i}=C_{r,j}\to C_{r,\infty}$ can only contain geodesics in the directions where $\omega\neq 0$. So it is in $F\oplus P$ based at $D(\gamma)(1)$. If $r\to 0$, then the directions of $P$ are contracted in $C_{r,\infty}$, so  only  $F$ remains.
\end{proof}

\paragraph{Fixed point}Now we return to the general study. The next step is to find an asymptotic fixed point for $T_{ji}$.

We have that for any $x\in \R^n$, there exists a unique decomposition $x = x_L + x_F$ with $x_L\in E\oplus P$ and $x_F\in F$.

\begin{lemma}
Choose $D(\gamma)(1)$ as base point and decompose $T_{ji}(x)=c_{ji} + f_{ji}(x)$ with $f_{ji}\in KA$ and $c_{ji}\in \R^n$. 
Let $c_{ji} = c_{ji,L}+c_{ji,F}$ be the decomposition following $\R^n= L\oplus F$. Define
 $Q_{ji}(x)= c_{ji,F} + f_{ji}(x)$.  Then
\begin{equation}
\lim_{j\to \infty} T_{ji}(x)-Q_{ji}(x) = \lim_{j\to\infty} c_{ji,L} =0
\end{equation}
 and therefore this convergence is uniform for $x\in \R^n$. 
\end{lemma}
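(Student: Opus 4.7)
The first step of the plan is a direct calculation: $T_{ji}(x) - Q_{ji}(x) = c_{ji} - c_{ji,F} = c_{ji,L}$ does not depend on $x$ at all. So the uniform-in-$x$ convergence is automatic once one shows $c_{ji,L} \to 0$, and the whole lemma reduces to this one-dimensional limit.

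To control $c_{ji,L}$ I would test $T_{ji}$ on the point $D(y_i)$. Because $g_{ji}(y_i) = y_j$ by definition, $T_{ji}(D(y_i)) = D(y_j)$, so that
$$c_{ji,L} = D(y_j)_L - f_{ji,E}(D(y_i)_E) - f_{ji,P}(D(y_i)_P).$$
Moreover, since $\pi(\gamma(t_i))$ is within $\epsilon_i$ of $y$ in $M$ and $y_i \in U_i$ is the lift of $y$ close to $\gamma(t_i)$, the images $D(y_i)$ converge to $D(\gamma)(1) = 0$ as $i \to \infty$; in particular $D(y_j)_L \to 0$ as $j \to \infty$.

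The $E$-summand then falls out cleanly: by definition of $E$, the diagonal entries of $f_{ji,A}$ along $E$-directions tend to $0$, so the operator $f_{ji,E}$ contracts the bounded quantity $D(y_i)_E$ to $0$ as $j \to \infty$, yielding $c_{ji,E} \to 0$ for each fixed $i$.

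The hard part will be the $P$-summand, because $f_{ji,P}$ is not contracting: its diagonal entries tend to $1$ and $f_{ji,K}|_P$ tends to $\id$ only in the double limit $j \gg i \to \infty$. Hence $f_{ji,P}(D(y_i)_P)$ stays comparable in norm to $D(y_i)_P$, which vanishes only as $i \to \infty$. Following the double-limit convention already in use throughout this section, I would argue that for any prescribed precision one first takes $i$ so large that $D(y_i)_P$ is negligible and only then sends $j \to \infty$, or equivalently passes to a diagonal subsequence. Geometrically, this is packaged by the earlier identity $C_{0,\infty} = F|_{D(\gamma)(1)}$: the $P$-component of $C_{r,\infty}$, which bounds $c_{ji,P}$ in the limit, shrinks to $\{0\}$ as $r \to 0$, and combining this with the $E$-part yields $c_{ji,L} \to 0$ in the intended sense.
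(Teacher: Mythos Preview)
Your reduction and the pointwise test at $D(y_i)$ are sound, and you correctly isolate the $P$-summand as the delicate part. The real gap is the sentence ``the images $D(y_i)$ converge to $D(\gamma)(1)=0$ as $i\to\infty$'': closeness of $\pi(\gamma(t_j))$ to $y$ is measured in an arbitrary metric on $M$, which has no direct relation to Euclidean distance in $\R^n$. The chart $D|_{U_j}$ factors as $T_{ji}\circ D|_{U_i}\circ g_{ji}^{-1}$, so its distortion is governed by $\|f_{ji}\|$, and this blows up along $F$. Hence $d_{\widetilde M}(y_j,\gamma(t_j))<\epsilon_j$ does \emph{not} give $D(y_j)\to 0$. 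What is true---and all you actually use---is $D(y_j)_L\to 0$, which follows once one notes that $f_{ji}|_{L}$ stays bounded (contracting on $E$, bounded eigenvalues on $P$ by lemma~\ref{lem-valprop}); but that step is missing from your argument.

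The paper bypasses this issue by never testing at a single point. It works instead with the nested sets $C_{\epsilon,i}=D(U_{\epsilon,i})$ and the already-established facts $C_{r,\infty}\subset (P\oplus F)|_{D(\gamma)(1)}$ and $C_{0,\infty}=F|_{D(\gamma)(1)}$: one writes $(C_{\epsilon,j})_L=c_{ji,L}+f_{ji}((C_{\epsilon,i})_L)$, notes that $c_{ji,L}$ is independent of $\epsilon$, and sends $\epsilon\to 0$ to isolate the translation part from the linear part. This is precisely the $C_{0,\infty}$ machinery you invoke in your last paragraph for the $P$-direction; the paper simply deploys it from the outset rather than as a rescue. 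Your approach is therefore close in spirit but trades the set-valued argument for a pointwise one, at the cost of having to justify $D(y_j)_L\to 0$ by hand.
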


\begin{proof}
For any $\epsilon >0$, 
$T_{ji}(C_{\epsilon,i})$ has for limit $C_{\epsilon,\infty}$ when $j\to \infty$. 
Consider the linear decomposition $L\oplus F$ based at $D(\gamma)(1)$.
When $\epsilon\to 0$ and $j\to \infty$,  the coordinates of
 $T_{ji}(C_{\epsilon,i})$ all tend to $0$ in the linear subspace $L\subset \R^n$. Indeed, $C_{0,\infty}=F|_{D(\gamma)(1)}$.
 
Note that
 $c_{ji,L}$ tends or not to $0$ when $j\to \infty$ independently from  the choice of $\epsilon$. 

Now decompose $T_{ji}(C_{\epsilon,i})$, and note that $f_{ji}$ preserves the decomposition $L\oplus F$.
\begin{align}
c_{ji} + f_{ji}(C_{\epsilon,i}) &= c_{ji,L}+c_{ji,F} + f_{ji}(C_{\epsilon,i})_L + f_{ji}(C_{\epsilon})_F \\
&= \left(c_{ji,L} + f_{ji}(C_{\epsilon,i})_L\right) + \left(c_{ji,F} + f_{ji}(C_{\epsilon,i})_F\right)
\end{align}
The term  $c_{ji,F} + f_{ji}(C_{\epsilon,i})_F$ is exclusively in $F$.
Hence the coordinate in $L$ is  determined by $c_{ji,L} + f_{ji}(C_{\epsilon,i})_L$  and it  must tend to $0$ when $j\to \infty$ and $\epsilon\to 0$.

But for any $j>i$, $f_{ji}$ acts as a contraction on $(C_{\epsilon,i})_L$.  Hence, for any $j>i$ fixed, when $\epsilon\to 0$ we have $f_{ji}(C_{\epsilon,i})_L\to 0$. By consequence, when $j\to \infty$ and $\epsilon\to 0$, $c_{ji,L}\to 0$. It proves the lemma.
\end{proof}

Note that $Q_{ji}$ has a fixed point on $F$, since $F$ is preserved and $f_{ji}$ acts by expansions on it. 

\begin{lemma}
Denote $q_{ji}\in F$ the fixed point of $Q_{ji}$.
 For $i>0$, $q_{ji}$ converges when $j\to \infty$ and we denote  $q_i=\lim q_{ji}$. Also, $D(y_i)\in E|_{q_i}$.
\end{lemma}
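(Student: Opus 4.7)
The plan is to exploit the fixed-point equation for $q_{ji}$ together with the fact that $T_{ji}$ sends $(D(y_i))_F$ to $(D(y_j))_F$ via the affine map $Q_{ji}|_F$, producing a contraction identity that forces $q_{ji}\to(D(y_i))_F$.

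First, I would derive the key identity. Since $f_{ji}$ preserves the decomposition $E\oplus P\oplus F$, projecting $D(y_j)=T_{ji}(D(y_i))$ onto the $F$-factor yields $(D(y_j))_F = c_{ji,F}+f_{ji}|_F((D(y_i))_F)$, which is precisely $Q_{ji}((D(y_i))_F)$. Subtracting this from the fixed-point equation $q_{ji} = c_{ji,F}+f_{ji}|_F(q_{ji})$ gives $q_{ji}-(D(y_j))_F = f_{ji}|_F(q_{ji}-(D(y_i))_F)$, equivalently
\[
q_{ji}-(D(y_i))_F = f_{ji}|_F^{-1}\bigl(q_{ji}-(D(y_j))_F\bigr),
\]
where $\|f_{ji}|_F^{-1}\|\to 0$ because the eigenvalues $\beta_{ji,q}$ for $e_q\in F$ all tend to $+\infty$.

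Next, I would control $(D(y_j))$. Because $y\in U_r$ for every $r>0$, the lift $y_j$ belongs to $U_{r,j}$ for every $r$, so $D(y_j)\in C_{r,j}$ for every $r>0$. Applying the previous proposition (accumulation equals limit for $\{C_{r,j}\}_j$) simultaneously for all $r$, any accumulation point of $\{D(y_j)\}_j$ lies in $\bigcap_{r>0}C_{r,\infty} = C_{0,\infty} = F|_{D(\gamma)(1)}$. In particular $(D(y_j))_L\to 0$ and $(D(y_j))_F$ stays bounded, so, combined with $\|f_{ji}|_F^{-1}\|\to 0$, the right-hand side of the identity tends to $0$; hence $q_{ji}\to(D(y_i))_F =: q_i$.

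It remains to check $(D(y_i))_P=0$. Projecting $D(y_j)=c_{ji}+f_{ji}(D(y_i))$ onto $P$, we obtain $(D(y_j))_P=c_{ji,P}+f_{ji}|_P((D(y_i))_P)$. The previous lemma gives $c_{ji,P}\to 0$, Lemma~\ref{lem-valprop} ensures that the eigenvalues of $f_{ji}|_P$ converge to finite positive numbers so that $f_{ji}|_P$ tends to an invertible linear map, and the previous paragraph gives $(D(y_j))_P\to 0$. This forces $(D(y_i))_P=0$, so $D(y_i)=(D(y_i))_E + q_i \in E + q_i = E|_{q_i}$. The step I expect to be the most delicate is the control of $(D(y_j))$: one must combine the accumulation-equals-limit property for every $r>0$ with the monotonic decrease of $C_{r,\infty}$ as $r\to 0$ to ensure that $(D(y_j))_F$ cannot grow faster than $f_{ji}|_F$ expands, so that the contraction $f_{ji}|_F^{-1}$ actually annihilates the right-hand side of the key identity.
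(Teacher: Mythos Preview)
Your contraction identity $q_{ji}-(D(y_i))_F = f_{ji}|_F^{-1}\bigl(q_{ji}-(D(y_j))_F\bigr)$ is correct and, once the limit is established, it does identify $q_i$ with $(D(y_i))_F$ exactly as the paper ultimately does. The problem is the step you yourself flag as delicate: the assertion that ``$(D(y_j))_F$ stays bounded'' does \emph{not} follow from the fact that every accumulation point of $\{D(y_j)\}$ lies in $C_{0,\infty}$. That implication is vacuous if $(D(y_j))_F\to\infty$, and nothing in your argument rules this out. Worse, your identity rewrites as
\[
(I-f_{ji}|_F^{-1})\,q_{ji}\;=\;(D(y_i))_F - f_{ji}|_F^{-1}\bigl((D(y_j))_F\bigr),
\]
so the boundedness of $q_{ji}$ is \emph{equivalent} to that of $f_{ji}|_F^{-1}\bigl((D(y_j))_F\bigr)$; you cannot obtain one from the other, and the remark that ``$(D(y_j))_F$ cannot grow faster than $f_{ji}|_F$ expands'' is precisely a restatement of the conclusion. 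The same gap propagates to your proof that $(D(y_i))_P=0$, since the input $(D(y_j))_P\to 0$ again relies on $\{D(y_j)\}$ having finite accumulation points.

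The paper supplies the missing geometric input by a different route. Instead of tracking the single point $D(y_j)$, it looks at the whole convex $C_{\epsilon,i}$ and argues by contradiction: if $D(y_i)$ stayed at positive distance from the affine subspaces $(E\oplus P)|_{q_{ji}}$, then for small $\epsilon$ the set $C_{\epsilon,i}$ would miss these stable subspaces, and its images $T_{ji}(C_{\epsilon,i})=C_{\epsilon,j}$ would be pushed to infinity along $F$, contradicting the known accumulation at the finite point $D(\gamma)(1)$. This forces $D(y_i)$ to lie in the limit of $E|_{q_{ji}}$, hence $q_{ji}$ is trapped near the single point $E|_{D(y_i)}\cap F|_{D(\gamma)(1)}$. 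That argument is what breaks the circularity; your identity can then be used to read off the exact value of the limit, but it cannot replace this step.
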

\begin{proof}
Choose $q_{ji}$ as base point of $\R^n$ when we consider the transformation $T_{ji}$. We start by showing that $D(y_i)$ is an accumulation point of $E|_{q_{ji}}$.

Suppose that $D(y_i)$ is not an accumulation point of the stable subspace 
 $(E\oplus P)|_{q_{ji}}$.
 Then there exists $\epsilon>0$ so that
 $C_{\epsilon,i}$
 avoids every 
$(E\oplus P)|_{q_{ji}}$ for $j$ large enough. 
But then
 $T_{ji}C_{\epsilon,i}=C_{\epsilon,j}$ 
 escapes to infinity and cannot accumulate at $D(\gamma)(1)$, impossible.

If $D(y_i)$ is an accumulation point of
$(E\oplus P)|_{q_{ji}}$ then $D(y_i)$ is an accumulation point of $E|_{q_{ji}}$ since $C_{\epsilon,j}$ accumulates at $F|_{q_{ji}}=C_{0,\infty}$ and therefore  has arbitrarily small coordinates along $P$.

Now we show that $q_{ji}$ does not tend to infinity. 
Since $E|_{q_{ji}}$ accumulates at $D(y_i)$, it comes that $q_{ji}$ must asymptotically be at the intersection of $E|_{D(y_i)}=\lim E|_{q_{ji}} $ with $C_{0,\infty}=F|_{q_{ji}}$. But this intersection is reduced to a single point, so $q_{ji}\to q_i$.
\end{proof}

\begin{figure}[ht]
\centering
\includegraphics[width=0.75\textwidth]{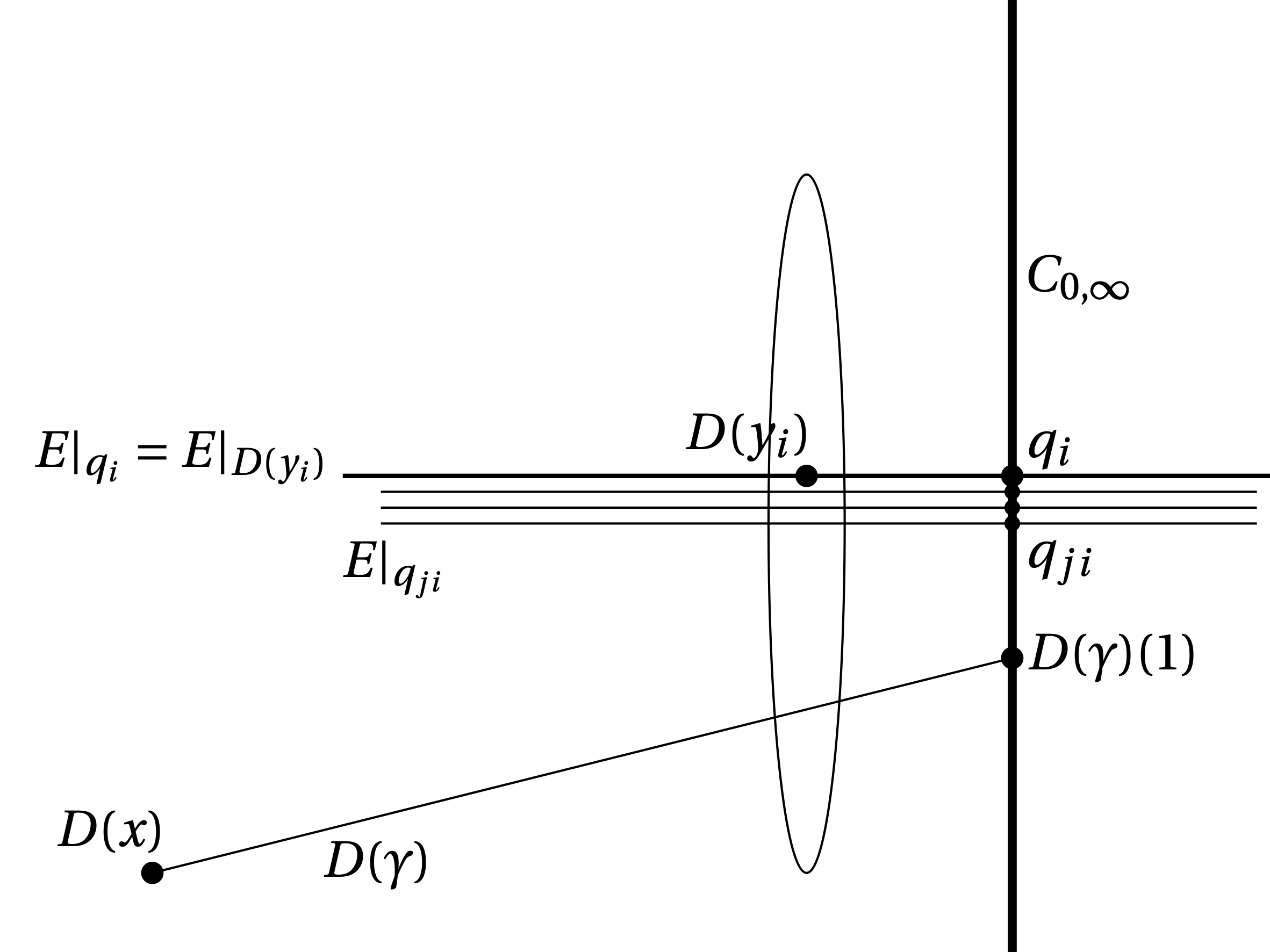}
\caption{The  dynamics of $Q_{ji}$.}
\end{figure}

\paragraph{Asymptotic dynamics}
Now we can determine the limits of the orbits in positive time, and apply proposition~\ref{prop-visconv}.

\begin{lemma}
Let $z\in E|_{q_i}$ and $V$ a neighborhood  of $z$. Then $\{T_{ji}(V)\}$ has $F|_{q_i}$ in its limit set.
\end{lemma}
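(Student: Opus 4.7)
The plan is to show $F|_{q_i}\subset \lim T_{ji}(V)$ by exhibiting, for each $w\in F|_{q_i}$, an explicit sequence $x_j\in V$ such that $T_{ji}(x_j)\to w$. Since $T_{ji}=Q_{ji}+c_{ji,L}$ with $c_{ji,L}\to 0$ uniformly by the previous lemma, it suffices to produce $x_j\in V$ with $Q_{ji}(x_j)\to w$. The map $Q_{ji}$ is convenient because it fixes $q_{ji}$ and, having linear part $f_{ji}\in KA$, preserves the decomposition $E\oplus P\oplus F$ while acting as a contraction on $E$ and as an expansion on $F$.

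Writing $z = q_i + e$ with $e\in E$ and $w = q_i + f_0$ with $f_0\in F$, I would first define
\[
x_j^* = q_{ji} + f_{ji}^{-1}(w - q_{ji}),
\]
so that $Q_{ji}(x_j^*) = w$ by construction. The vector $w - q_{ji} = f_0 + (q_i - q_{ji})$ lies in $F$, so the entries of $f_{ji,A}^{-1}$ in the $F$-directions (which tend to $0$ since $\omega_q=\infty$ on $F$) together with the boundedness of $f_{ji,K}$ in the compact group $K$ imply $f_{ji}^{-1}(w-q_{ji})\to 0$. Combined with $q_{ji}\to q_i$, this gives $x_j^*\to q_i$.

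Next I would set $x_j = x_j^* + e$. Then $x_j\to q_i + e = z$, so $x_j\in V$ for all sufficiently large $j$. Using that $f_{ji}$ is linear and preserves the splitting, a direct computation gives
\[
Q_{ji}(x_j) = Q_{ji}(x_j^*) + f_{ji}(e) = w + f_{ji}(e).
\]
Since $e\in E$, the diagonal part $f_{ji,A}(e)$ tends to $0$ ($\omega_q=0$ on $E$) and $f_{ji,K}$ is bounded, so $f_{ji}(e)\to 0$. Hence $Q_{ji}(x_j)\to w$, and therefore $T_{ji}(x_j) = Q_{ji}(x_j) + c_{ji,L}\to w$. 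I do not anticipate a genuine obstacle: the construction is dictated by the fixed-point geometry of $Q_{ji}$ and the invariance of $E\oplus P\oplus F$, and the only inputs needed are the uniform convergence $T_{ji}-Q_{ji}\to 0$ and the contraction/expansion behavior of $f_{ji,A}$ on $E$ and $F$ already recorded above.
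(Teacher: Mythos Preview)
Your proof is correct and follows essentially the same approach as the paper: reduce to $Q_{ji}$ via the uniform approximation $T_{ji}-Q_{ji}\to 0$, and then exploit that $Q_{ji}$ fixes $q_{ji}\to q_i$, contracts $E$, and expands $F$. The paper compresses this into a single line (``$V$ intersects every $E|_{q_{ji}}$ for $j$ large enough''), leaving the reader to unpack the hyperbolic dynamics of $Q_{ji}$; you have made this explicit by writing down the preimage $x_j^*=q_{ji}+f_{ji}^{-1}(w-q_{ji})$ and translating by $e$ to land back near $z$.
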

\begin{proof}
It is the case for $Q_{ji}(V)$ since $V$ intersects every $E|_{q_{ji}}$ for $j$ large enough,  and we know that $T_{ji}-Q_{ji}\to 0$.
\end{proof}

\begin{proposition}\label{prop-eplus}
Let $S\subset \widetilde M$ be a convex containing the incomplete geodesic $\gamma$. Assume that $D(S)$ has a smooth boundary at $D(\gamma)(1)$. Let $i>0$. We have the following properties. 
\begin{itemize}
\item
The orbit $T_{ji}^{-1}(D(S))$ tends  to a product $E_{+,i}\times P_c$ of a half-space $E_{+,i}\subset E|_{q_i}$ and a neighborhood $P_c\subset P|_{q_i}$ of the origin.
\item
The product $E_{+,i}\times P_c$ is visible from $y_i$.
\item
The boundary of $E_{+,i}$ is  described by the limit of
 $T_{ji}^{-1}(\rT_{D(\gamma)(1)}D(S)) \cap E|_{q_i}$.
\end{itemize}
\end{proposition}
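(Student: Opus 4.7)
My plan is to compute $\lim_{j\to\infty} T_{ji}^{-1}(D(S))$ by direct analysis of the asymptotic dynamics on $E\oplus P\oplus F$. I would work with $q_i$ as base point. Since $T_{ji}-Q_{ji}=c_{ji,L}\to 0$, the map $T_{ji}^{-1}$ is asymptotic to $Q_{ji}^{-1}$, and the linear part $f_{ji}^{-1}$ contracts $F$ toward the fixed point $q_{ji}\to q_i$, acts nearly as the identity on $P$, and expands $E$. Because $D(S)$ has a smooth boundary at $D(\gamma)(1)$, locally $D(S)$ is squeezed between two parallel hyperplanes approximating $H=\rT_{D(\gamma)(1)}D(S)$. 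Pulling back and letting $j\to\infty$: the $F$-coordinate collapses onto $\{q_i\}$; the $P$-coordinate survives as a convex neighborhood $P_c\subset P|_{q_i}$ of the origin; and the $E$-coordinate is stretched to a half-space $E_{+,i}\subset E|_{q_i}$ (a half-space rather than all of $E|_{q_i}$, since $D(S)$ lies on one side of $H$). The bounding hyperplane of $E_{+,i}$ is exactly $\lim T_{ji}^{-1}(H)\cap E|_{q_i}$, which gives the third bullet directly.

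\textbf{Visibility (second bullet).}
Here my plan is to invoke Proposition~\ref{prop-visconv}. The set $g_{ji}^{-1}(S)$ is convex: its image $T_{ji}^{-1}(D(S))$ is the affine image of a convex set hence convex, and $D$ is injective on $g_{ji}^{-1}(S)$ because $g_{ji}^{-1}$ is a bijection of $\widetilde M$ and $D$ is injective on $S$. By Lemma~\ref{lem-geoy}, $g_{ji}^{-1}(\gamma)\subset g_{ji}^{-1}(S)$ has $y_i$ as a limit point as $j\to\infty$, so $y_i$ is a limit point of $g_{ji}^{-1}(S)$. Proposition~\ref{prop-visconv} then produces a closed convex set $S_\infty\ni y_i$ with $D(S_\infty)=\lim T_{ji}^{-1}(D(S))=E_{+,i}\times P_c$. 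Convexity of $S_\infty$ together with $y_i\in S_\infty$ places $S_\infty$ inside $\exp_{y_i}(V_{y_i})$ via the bullet on convex subsets in the proposition of Carrière recalled earlier, so $E_{+,i}\times P_c$ is visible from $y_i$.

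\textbf{Expected obstacle.}
The delicate step will be verifying that the limit genuinely fills out an entire half-space in $E|_{q_i}$ and an open neighborhood in $P|_{q_i}$, rather than some smaller proper convex subset. This requires extracting from the smoothness of $\partial D(S)$ at $D(\gamma)(1)$ that $D(S)$ occupies a full one-sided solid neighborhood of $D(\gamma)(1)$, and then transporting this information through the $E$-expansion of $f_{ji}^{-1}$ so that every direction of $E|_{q_i}$ on the correct side of the limiting hyperplane is captured. In parallel, one must check that $f_{ji}^{-1}(c_{ji,L})$ remains bounded as $j\to\infty$, so that the asymptotic dynamics of $T_{ji}^{-1}$ really match those of $Q_{ji}^{-1}$ and the pulled-back half-space does not drift to infinity before the limit is taken.
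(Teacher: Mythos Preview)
Your visibility argument (second bullet) is essentially the paper's: apply Lemma~\ref{lem-geoy} to get $y_i$ as a limit point of $g_{ji}^{-1}(S)$, then invoke Proposition~\ref{prop-visconv}. That part is fine.

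The gap is in your first bullet, and you have correctly located it yourself under ``Expected obstacle'' --- but it is not a technicality to be checked, it is a genuine circularity. You assert that because $T_{ji}-Q_{ji}=c_{ji,L}\to 0$, the inverse $T_{ji}^{-1}$ is asymptotic to $Q_{ji}^{-1}$. This is false in general: $T_{ji}^{-1}(x)-Q_{ji}^{-1}(x)=-f_{ji}^{-1}(c_{ji,L})$, and since $f_{ji}^{-1}$ expands the $E$-component, this difference may well diverge. The paper says exactly this immediately \emph{after} the proposition, and only proves boundedness of $f_{ji}^{-1}(c_{ji,L})$ in the subsequent lemma --- under the extra hypothesis $q_i\notin E_{+,i}$, which of course presupposes that $E_{+,i}$ has already been constructed. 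So you cannot use $T_{ji}^{-1}\sim Q_{ji}^{-1}$ to build $E_{+,i}$.

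The paper's way out is to avoid the backward dynamics on the $E$-coordinate entirely. First it writes, for $z=z_E+z_P+z_F$,
\[
T_{ji}^{-1}(z)=T_{ji}^{-1}(z_E)+f_{ji}^{-1}(z_P)+f_{ji}^{-1}(z_F),
\]
so the $P$- and $F$-components are governed purely by the linear part $f_{ji}^{-1}$ (this gives the collapse on $F$ and the neighborhood $P_c$, as you said). For the $E$-coordinate it reverses the question: a point $z\in E|_{q_i}$ lies in the limit of $T_{ji}^{-1}(D(S))$ iff every neighborhood $V$ of $z$ has $T_{ji}(V)$ eventually meeting $D(S)$. By the lemma just preceding the proposition, $T_{ji}(V)$ accumulates on $F|_{q_i}\ni D(\gamma)(1)$, and whether it does so from inside $D(S)$ is decided by the tangent hyperplane $\rT_{D(\gamma)(1)}D(S)$. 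This uses only the \emph{forward} approximation $T_{ji}\sim Q_{ji}$, which is available, and yields a half-space $E_{+,i}$ without ever needing to control $f_{ji}^{-1}(c_{ji,L})$.
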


\begin{proof}
To prove this statement,
we apply the preceding lemma with 
 proposition~\ref{prop-visconv} which allows to prove visibility of the limit of convex subsets.
 
For $i>0$ fixed, we consider the convex subsets $T_{ji}^{-1}(D(S))$. We already know that $D(y_i)$ is a limit point since $\gamma\subset S$ (see lemma~\ref{lem-geoy}).  Therefore the limit of $T_{ji}^{-1}D(S)$ is a closed convex subset visible from $y_i$.

Consider $z\in D(S)$ and write $z = z_E + z_P + z_F$ with $z_P \in P$ and $z_F \in F$. Then
\begin{equation}
T_{ji}^{-1}(z) = T_{ji}^{-1}(z_E) + f_{ji}^{-1}(z_P) + f_{ji}^{-1}(z_F).
\end{equation}
Note that, up to a rotation, $f_{ji}^{-1}(z_P)\to r z_P$  (with $r>0$ as in lemma~\ref{lem-valprop}) and $f_{ji}^{-1}(z_F) \to 0$.

This observation shows that
the limit of $T_{ji}^{-1}D(S)$ has vanishing coordinates on $F$  and almost identical coordinates on $P$ (up to a rotation), say $P_c$.

Now in the coordinate of $E$,
 a point $z\in E|_{q_i}$ is a limit point of 
$T_{ji}^{-1}(D(S))$ if any neighborhood  $V$ of $z$ has $T_{ji}(V)$ accumulating at $D(\gamma)(1)$ from inside $D(S)$. This property   is described by the  relative position of $T_{ji}(V)$ to $\rT_{D(\gamma)(1)}D(S)$. It describes a half-space $E_{+,i}$ in $E|_{q_i}$.

Therefore the limit is $E_{+,i}\times P_c$ and the listed properties follow.
\end{proof}

To get a better description of $E_{+,i}$ inside $E|_{q_i}$, we must explain how to approximate $T_{ji}^{-1}$ relatively to $Q_{ji}^{-1}$. Note that \emph{a priori}
\begin{equation}
T_{ji}^{-1}(x) -Q_{ji}^{-1} (x)= - f_{ji}^{-1}(c_{ji,L}) 
\end{equation}
and might not be tending to zero, or even stay bounded.

The study of $-f_{ji}^{-1}(c_{ji,L})$ is technical. With the two following lemmas, we   will later prove  that $-f_{ji}^{-1}(c_{ji,L})\to 0$ when $j\gg i\to \infty$.

\begin{lemma}
Assume that $E_{+,i}$ does not contain $q_i$. 
For any $i>0$, there exists $M>0$ such that for any $j>i$, 
\begin{equation}
c_{ji,L} = f_{ji}(b_{ji,E}) + b_{ji,P}
\end{equation}
 with $b_{ji,E}\in E$, $b_{ji,P}\in P$ verifying $b_{ji,P}\to 0$ and $\|b_{ji,E}\|<M$.
\end{lemma}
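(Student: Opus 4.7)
I would extract the decomposition from the cocycle identity $D(y_j)=T_{ji}(D(y_i))$ (which comes from $g_{ji}(y_i)=y_j$) combined with the preceding lemma, and then bound the $E$-part by a recession argument exploiting $q_i\notin E_{+,i}$.

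Writing $D(y_i)=q_i+\xi_i$ and $D(y_j)=q_j+\xi_j$ with $q_k\in F$ and $\xi_k\in E$ (allowed because the previous lemma gives $D(y_k)\in E|_{q_k}$), and projecting the identity $D(y_j)=c_{ji}+f_{ji}(D(y_i))$ onto each summand of $E\oplus P\oplus F$ (which $f_{ji}$ preserves), the $P$-equation reduces to $c_{ji,P}=0$. In the required decomposition $c_{ji,L}=f_{ji}(b_{ji,E})+b_{ji,P}$ one is therefore forced to take $b_{ji,P}=0$ (so $b_{ji,P}\to 0$ is trivial) and $b_{ji,E}=f_{ji}^{-1}(c_{ji,E})=f_{ji}^{-1}(\xi_j)-\xi_i$. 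Since $\xi_i$ is fixed, bounding $\|b_{ji,E}\|$ uniformly in $j$ reduces to bounding $\|f_{ji}^{-1}(\xi_j)\|$ uniformly.

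For this I argue by contradiction. Suppose $\|b_{ji,E}\|$ is unbounded; up to a subsequence, $\|b_{ji,E}\|\to\infty$ and $b_{ji,E}/\|b_{ji,E}\|\to u\in E$ with $|u|=1$. Then
\[
T_{ji}^{-1}(0)=(q_{ji}-f_{ji}^{-1}(q_{ji}))-b_{ji,E}
\]
has $F$-part converging to $q_i$ and $E$-part escaping to infinity in direction $-u$. Since $0\in D(S)$, each $T_{ji}^{-1}(0)$ belongs to the convex set $T_{ji}^{-1}(D(S))$, whose limit set is $E_{+,i}\times P_c$ by proposition~\ref{prop-eplus}; the escape forces $-u$ to be a recession direction of the half-space $E_{+,i}\subset E|_{q_i}$.

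Turning this into a contradiction with $q_i\notin E_{+,i}$ is the delicate step. The plan is to use the smooth boundary of $D(S)$ at $D(\gamma)(1)$: setting $H=\rT_{D(\gamma)(1)}D(S)$, the point $T_{ji}^{-1}(0)$ always sits on the hyperplane $T_{ji}^{-1}(H)$, and by the last item of proposition~\ref{prop-eplus}, $T_{ji}^{-1}(H)\cap E|_{q_i}$ converges to the affine hyperplane $\partial E_{+,i}$. A blow-up of $T_{ji}^{-1}(0)$ along $T_{ji}^{-1}(H)$ constrains $-u$ to lie in the direction of $\partial E_{+,i}$; combined with the $F$-limit $q_i$, this pushes $\partial E_{+,i}$ arbitrarily close to $q_i$ in the limit, so that $q_i\in E_{+,i}$ (the half-space being closed), contradicting the hypothesis. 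The main technical obstacle is quantifying the rate at which $T_{ji}^{-1}(H)\cap E|_{q_i}$ approaches $\partial E_{+,i}$ against the blow-up rate of $b_{ji,E}$, which requires careful bookkeeping of the expansion rates of $f_{ji}^{-1}|_E$.
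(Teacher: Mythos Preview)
Your decomposition step is actually sharper than the paper's: projecting $D(y_j)=c_{ji}+f_{ji}(D(y_i))$ onto $P$ and using $D(y_k)\in E|_{q_k}$ with $q_k\in F$ does give $c_{ji,P}=0$ outright, so $b_{ji,P}=0$ rather than merely $b_{ji,P}\to 0$. That part is fine.

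The gap is exactly where you locate it. You try to extract a contradiction from the single point $T_{ji}^{-1}(0)$ sitting on $T_{ji}^{-1}(H)$ while escaping in direction $-u$, and you correctly note that this requires comparing the rate at which $T_{ji}^{-1}(H)\cap E|_{q_i}$ stabilises to $\partial E_{+,i}$ against the blow-up of $b_{ji,E}$. That comparison is genuinely delicate (the expansion rates of $f_{ji}^{-1}|_E$ along different eigen-directions need not be commensurable), and you do not close it.

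The paper avoids this rate problem by a different choice of test object: instead of the point $D(\gamma)(1)$ and its tangent hyperplane, it feeds the \emph{whole geodesic} $D(\gamma)$ through $T_{ji}^{-1}$. Writing $T_{ji}^{-1}(D(\gamma))=-f_{ji}^{-1}(c_{ji,L})+Q_{ji}^{-1}(D(\gamma))$, the segment $Q_{ji}^{-1}(D(\gamma))$ tends to a half-line $\Delta\subset E|_{q_i}$ based at $q_i$ (one endpoint is $Q_{ji}^{-1}(0)\to q_i$, the other escapes in the dominant $E$-direction of $\gamma$). Since $T_{ji}^{-1}(D(\gamma))$ must accumulate at the fixed point $D(y_i)$ (lemma~\ref{lem-geoy}), the translate of $\Delta$ by $-f_{ji}^{-1}(c_{ji,L})=-b_{ji,E}$ must still pass near $D(y_i)$; if $b_{ji,E}$ is unbounded this forces the escape direction to be exactly $-\Delta$, and then the translated half-line contains its original base point $q_i$. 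But $\gamma\subset S$ gives $T_{ji}^{-1}(D(\gamma))\subset T_{ji}^{-1}(D(S))\to E_{+,i}\times P_c$, so $q_i\in E_{+,i}$, contradicting the hypothesis. The one-dimensionality of $\gamma$ is what makes the direction of escape unique and ties it directly to $q_i$, so no rate bookkeeping is needed.
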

\begin{proof}
We consider the orbit of $D(\gamma)$. We know that $T_{ji}^{-1}D(\gamma)$ has in its limit set $D(y_i)\in E_{+,i}$. Furthermore,
\begin{equation}
T_{ji}^{-1}(D(\gamma)) = -f_{ji}^{-1}(c_{ji,L}) + Q_{ji}^{-1}(D(\gamma))
\end{equation}
and $Q_{ji}^{-1}(D(\gamma))$ tends to an half-line $\Delta$ based at $q_{i}\in E|_{q_i}$. Therefore, in order for $T_{ji}^{-1}(D(\gamma))$ to accumulate at $D(y_i)\in E|_{q_i}$, one must have that $-f_{ji}^{-1}(c_{ji,L})$ is either bounded or the sum of a $P$-component tending to zero and  a point along the opposite half-line  $-\Delta$ escaping to infinity. In the latter case, it would imply that $q_i=\Delta(0)$ belongs to $E_{+,i}$.
\end{proof}

We will prove in the case of a rank one ray geometry that $E_{+,i}$ never contains $q_i$. So in fact $b_{ji,E}$ is bounded.
The next lemma explains that in fact $T_{ji}^{-1}$ is as well approximated by $Q_{ji}^{-1}$ as $i>0$ gets large.

\begin{lemma}\label{lem-approxneg}
Assume that for any $i$, $E_{+,i}$  never contains $q_i$. Then 
\begin{equation}
\lim_{k\to \infty}\lim_{j\to \infty}b_{kj,E} = 0.
\end{equation}
\end{lemma}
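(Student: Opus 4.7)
The plan is to exploit the cocycle relation $T_{ki}=T_{kj}\circ T_{ji}$ to derive a recursion for the $E$-components of the translation parts, and then combine the uniform bound supplied by the previous lemma with the fact that $f_{ji}$ contracts $E$ arbitrarily strongly as $j$ grows with $i$ fixed.

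First I would unpack the cocycle. From $T_{ki}=T_{kj}\circ T_{ji}$ one reads $c_{ki}=c_{kj}+f_{kj}(c_{ji})$, and since $f_{kj}$ preserves $E\oplus P\oplus F$ this projects to $c_{ki,L}=c_{kj,L}+f_{kj}(c_{ji,L})$. Substituting $c_{\bullet,L}=f_{\bullet}(b_{\bullet,E})+b_{\bullet,P}$ and isolating the $E$-component (using that $f_{kj}$ preserves $E$ and $P$ separately) gives
\[
f_{ki}(b_{ki,E})=f_{kj}(b_{kj,E})+f_{ki}(b_{ji,E}).
\]
Applying $f_{ki}^{-1}$ and using $f_{ki}^{-1}f_{kj}=f_{ji}^{-1}$ rearranges to the key formula
\[
b_{kj,E}=f_{ji}\bigl(b_{ki,E}-b_{ji,E}\bigr),
\]
valid for any $i<j<k$.

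Now I would fix any $i>0$. Under the standing hypothesis that $q_\ell\notin E_{+,\ell}$ for every $\ell$, the previous lemma furnishes a constant $M=M(i)$ with $\|b_{ji,E}\|,\|b_{ki,E}\|<M$ for all $k>j>i$, hence $\|b_{ki,E}-b_{ji,E}\|<2M$. Simultaneously, the contraction rate of $f_{ji}$ on $E$ tends to zero: since each $e_q\in E$ has $\omega_q=0$, the remark following lemma~\ref{lem-valprop} combined with the preceding monotonicity proposition forces $\beta_{ji,q}\to 0$ as $j\to\infty$ with $i$ fixed, while $f_{ji,K}$ is a bounded rotation commuting with the diagonal factor. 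Hence $\max_{e_q\in E}|\beta_{ji,q}|\to 0$, and bounding the operator norm on $E$ accordingly one obtains
\[
\|b_{kj,E}\|\le 2M\cdot\max_{e_q\in E}|\beta_{ji,q}|\xrightarrow[j\to\infty]{}0
\]
uniformly in $k>j$, which is exactly $\lim_{k\to\infty}\lim_{j\to\infty}b_{kj,E}=0$.

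The main obstacle is combinatorial rather than analytic: the cocycle must be unwound carefully enough to isolate the $E$-component cleanly from the $P$-translations (which vanish only asymptotically), and one has to recognise that $f_{ki}^{-1}f_{kj}$ simplifies to $f_{ji}^{-1}$. Once the identity $b_{kj,E}=f_{ji}(b_{ki,E}-b_{ji,E})$ is in hand, the hypothesis $q_i\notin E_{+,i}$ serves only to keep the right-hand side bounded, and the diagonal contraction on $E$ closes the argument by elementary estimates.
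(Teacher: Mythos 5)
Your proposal is correct and follows essentially the same route as the paper: the same cocycle identification yielding $b_{ki,E}-b_{ji,E}=f_{ji}^{-1}(b_{kj,E})$, the same use of the hypothesis $q_i\notin E_{+,i}$ only to invoke the bound $\|b_{\bullet i,E}\|<M(i)$ from the preceding lemma, and the same conclusion from the fact that $f_{ji}$ contracts $E$ (equivalently, $f_{ji}^{-1}$ expands it) as $j\to\infty$ with $i$ fixed. Your explicit estimate $\|b_{kj,E}\|\le 2M\max_{e_q\in E}\beta_{ji,q}$ is just a quantitative restatement of the paper's contradiction argument.
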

\begin{proof}
The cocycle relation $T_{kj}T_{ji}=T_{ki}$ gives:
\begin{align}
T_{ji} &=  f_{ji}(b_{ji,E}) + b_{ji,P} + c_{ji,F}+f_{ji} \\
T_{kj}T_{ji} &= f_{kj}(b_{kj,E})  + f_{kj}(f_{ji}(b_{ji,E}))+ b_{kj,P} + f_{kj}(b_{ji,P}) +c_{kj,F} + f_{kj}(c_{ji,F}+f_{ji}) \\
&= f_{ki}(b_{ki,E}) + b_{ki,P} + c_{ki,F} + f_{ki}
\end{align}
and it implies by identification:
\begin{align}
f_{ki}(b_{ki,E})  &=  f_{kj}(b_{kj,E})  + f_{kj}(f_{ji}(b_{ji,E})) \\
&= f_{kj}(b_{kj,E})  +  f_{ki}(b_{ji,E}) \\
b_{ki,E} - b_{ji,E} &= f_{ji}^{-1}(b_{kj,E})
\end{align}
and since $\|b_{ki,L} - b_{ji,L}\| < 2M$, we have that $b_{kj,E}$ must tend to zero since otherwise $f_{ji}^{-1}(b_{kj,E})$ would escape to infinity.
\end{proof}

\subsection*{Rank one ray  manifolds}

We examine closed manifolds with a rank one ray geometry. For those manifolds, the holonomy takes its values in $G_1=\R^n\rtimes KA_1$, where $A_1$ has dimension $1$.

\begin{lemma}
If $(G_1,\R^n)$ is a rank one ray geometry, then there exists a decomposition $L_1\oplus L_2\oplus L_3$ such that for any Fried dynamics, $P = L_3$ and $\{E,F\}=\{L_1, L_2\}$.
\end{lemma}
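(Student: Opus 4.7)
The plan is to exploit the fact that in a rank one ray geometry the diagonal group $A_1 \cong \R_+^*$ is parametrized by a single scalar, so each basis-direction weight is determined by a fixed exponent independent of the Fried dynamics. Concretely, fix an isomorphism $\beta \colon A_1 \to \R_+^*$, and let $d_1, \dots, d_n \in \R$ be the exponents from the definition of a ray geometry, so that $f_{ji,A}$ acts on $e_q$ by $\beta_{ji,q} = \beta_{ji}^{d_q}$ for some sequence $\beta_{ji} \in \R_+^*$.

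I would first define the candidate decomposition directly from the geometry, before invoking any Fried dynamics:
\begin{equation}
L_1 = \mathrm{span}\{e_q : d_q > 0\}, \quad L_2 = \mathrm{span}\{e_q : d_q < 0\}, \quad L_3 = \mathrm{span}\{e_q : d_q = 0\}.
\end{equation}
Then given any Fried dynamics, I pass to a subsequence so that $\beta_{ji}$ has a limit in $[0,\infty]$ as $j \gg i \to \infty$. By the earlier lemma applied to any $q$ with $d_q \neq 0$, one has $\omega_q = \lim \beta_{ji}^{d_q} \in \{0,1,\infty\}$, which forces the common limit $\beta_\infty$ of $\beta_{ji}$ to lie in $\{0,1,\infty\}$ as well (otherwise $\beta_\infty^{d_q}$ would be a strictly positive real number different from $1$).

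The decisive step is to rule out $\beta_\infty = 1$: in that case $\beta_{ji}^{d_q} \to 1$ for every $q$, so $\omega_q = 1$ for all $q$, giving $E = \{0\}$, in contradiction with $\dim E > 0$ (recorded right after the definition of $E \oplus P \oplus F$, and itself a consequence of the disjoint accumulation of $D(U_i)$ along $\overline{D(\gamma)}$). This is the main point where rank one is used, and I expect it to be the only real obstacle, though it is essentially immediate here.

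It then only remains to match the two surviving cases with the definitions of $E$, $P$, $F$. If $\beta_\infty = 0$, then $\omega_q = 0$ exactly when $d_q > 0$, $\omega_q = \infty$ exactly when $d_q < 0$, and $\omega_q = 1$ exactly when $d_q = 0$; hence $E = L_1$, $F = L_2$, $P = L_3$. If $\beta_\infty = \infty$, the same analysis with signs reversed gives $E = L_2$, $F = L_1$, $P = L_3$. In both cases $P = L_3$ and $\{E,F\} = \{L_1, L_2\}$, which is the claim. Finally, since the indexation of $L_1, L_2, L_3$ depends only on the fixed exponents $d_q$ of the geometry and not on the chosen Fried dynamics, the decomposition is intrinsic, completing the proof.
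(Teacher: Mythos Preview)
Your proposal is correct and follows essentially the same argument as the paper: define $L_1,L_2,L_3$ by the sign of the exponents $d_q$, then use that in rank one the common parameter $\beta_{ji}$ controls all the $\beta_{ji,q}=\beta_{ji}^{d_q}$ simultaneously, and rule out $\beta_{ji}\to 1$ via the fact that $\dim E>0$. The paper phrases this last step as ``either $d_i=0$ or $\beta\to 1$'' for a direction in $P$, while you track the limit $\beta_\infty\in\{0,1,\infty\}$ directly, but the content is identical (your sign convention for $L_1,L_2$ is swapped relative to the paper's, which is immaterial for the unordered conclusion $\{E,F\}=\{L_1,L_2\}$).
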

\begin{proof}
Because the rank is one, each direction $e_i$ has  in $A_1$ a single diagonal factor $\beta^{d_i}$ with degree $d_i\in \R$.
Let $L_1$ be generated by the vectors $e_i$ such that $d_{i}<0$, $L_2$ when $d_i>0$ and $L_3$ when $d_i=0$. Note that if $\beta$ is exchanged with $\beta^{-1}$ then $L_1$ becomes $L_2$ and conversely ($L_3$ is unchanged).

If $E\oplus P \oplus F$ is a Fried dynamics, then we only need to show that $P=L_3$ since then it is clear that $\{E,F\}$ must be equal to $\{L_1,L_2\}$. But in rank one, if a diagonal factor $\beta^{d_i}$ tends to $1$ correspondingly to a direction $e_i\in P$, then either $d_i=0$ or $\beta\to 1$. If $d_i=0$ then $e_i\in L_3$. If $\beta\to 1$ then it is true globally on $\R^n$, showing that $E=\{0\}$, impossible.
\end{proof}

A consequence of this observation is that $P$ is independent from the dynamics, since $d_i=0$ is a condition on $A=A_1$. Therefore, from a Fried dynamics to another one $E$ and $F$ are either the same or exchanged. Also, every direction in $P$ is completely visible by the following lemma.

\begin{lemma}
The direction vector of an incomplete  geodesic $D(\gamma)$ has a non-vanishing coordinate along $E$ in the linear decomposition $\R^n=E\oplus P \oplus F$ associated to its Fried dynamics.
\end{lemma}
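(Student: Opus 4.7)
I argue by contradiction: let $v$ denote the direction vector of $D(\gamma)$ at $D(\gamma)(1)$ and suppose $v \in P \oplus F$, i.e., $v_E = 0$. The goal is to exploit the rank-one hypothesis together with Proposition~\ref{prop-eplus} to force the half-space $E_{+,i}$ to degenerate into the whole affine subspace $E|_{q_i}$, and then to derive a contradiction with the incompleteness of $\gamma$.

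The first step is to observe that the orbit $f_{ji}^{-1}(v)$ remains bounded. Because the rank is one, $A_1$ acts trivially on $P$ (from the decomposition $L_3 = P$), so $f_{ji}^{-1}(v_P)$ is only rotated by the compact part, with the rotations converging to the identity in the double limit; and $f_{ji}^{-1}(v_F)$ tends to $0$ since $f_{ji}$ expands the $F$-direction. Combining this with Lemma~\ref{lem-geoy} and Proposition~\ref{prop-visconv}, the developed images $T_{ji}^{-1}(D(\gamma))$ converge to a bounded segment passing through $D(y_i)$ whose direction lies entirely in $P$.

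The second step is to choose a convex neighborhood $S$ of $\gamma$ with smooth boundary at $D(\gamma)(1)$ and outward normal $n$ lying in $P \oplus F$; this is possible because $v \in P \oplus F$ is orthogonal to $E$, leaving ample room for such a normal within $v^\perp$. With this choice, the transformed normal $f_{ji}^T(n)$ remains in $P \oplus F$ (bounded in the $P$ part, possibly blowing up in $F$, but in either case keeping its direction in $P \oplus F$), so in the limit the hyperplane $T_{ji}^{-1}(\rT_{D(\gamma)(1)} D(S))$ has normal direction orthogonal to $E$. Intersecting with $E|_{q_i} = q_i + E$ (where $q_i \in F$), every vector $q_i + e$ is automatically orthogonal to the limiting normal, so the whole $E|_{q_i}$ lies on the limiting hyperplane. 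By the description of $\partial E_{+,i}$ in Proposition~\ref{prop-eplus}, this forces $E_{+,i} = E|_{q_i}$.

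The third step closes the argument. The visible region from $y_i$ now contains the entire affine subspace $E|_{q_i}$ together with a bounded neighborhood $P_c$ in $P$, an unbounded convex set in $\R^n$. Invoking the Carrière-type criterion from Section~\ref{sec-2} that a convex $\exp_{y_i}(V_{y_i})$ must equal $\widetilde M$, the visibility extends to all of $\widetilde M$ from $y_i$; translating via the holonomy $T_{1i}$ (which expands $E$, contracts $F$, and rotates $P$) then produces an unbounded visible region from $y_1$ containing a neighborhood of $D(\gamma)(1)$ in the $+v$ direction, which would allow extension of $\gamma$ past $t = 1$ in $\widetilde M$, contradicting its incompleteness. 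The main obstacle is making this last extension step rigorous, particularly in edge cases (such as $\dim P = 0$ or $\dim F = 1$) where the normal choice in Step~2 must be adapted to either $n \in F \cap v_F^\perp$ or to a mixed choice exploiting the orthogonality of $E$ to $v$.
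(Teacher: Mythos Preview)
Your argument has a genuine gap in Step~3, and the paper's proof takes an entirely different, much more elementary route.

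\textbf{The gap.} In Step~3 you invoke the Carrière criterion ``if $\exp_{y_i}(V_{y_i})$ is convex then $\exp_{y_i}(V_{y_i})=\widetilde M$'' to conclude that visibility extends to all of $\widetilde M$. But you have not shown that the full visible set $\exp_{y_i}(V_{y_i})$ is convex; you have only shown that a particular convex subset $E|_{q_i}\times P_c$ lies inside it. These are very different statements, and the criterion does not apply. The subsequent sentence, about transporting via $T_{1i}$ to produce a visible neighborhood of $D(\gamma)(1)$ in the $+v$ direction, is likewise not justified: the set $E|_{q_i}\times P_c$ has trivial $F$-coordinate, so it does not contain $D(\gamma)(1)$ (which lies in $F|_{q_i}$) when $F\neq\{0\}$, and no amount of translating by holonomy elements produces the missing $F$-neighborhood without an additional argument. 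You flag this yourself (``the main obstacle is making this last extension step rigorous''), and indeed the obstacle is real, not merely cosmetic. There is also a secondary issue: Proposition~\ref{prop-eplus} is stated for an honest half-space $E_{+,i}$, and the degenerate case where the limiting hyperplane contains all of $E|_{q_i}$ is not covered by its conclusion as written, so invoking it to deduce $E_{+,i}=E|_{q_i}$ requires revisiting that proof.

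\textbf{What the paper does instead.} The paper's argument is a short pigeonhole, and uses none of the machinery of Proposition~\ref{prop-eplus}. By construction of the Fried dynamics, $\pi(\gamma)$ exits $U$ between the times $t_i$ and $t_{i+1}$, so the developed neighborhoods $D(U_j)=T_{ji}D(U_i)$ meet $D(\gamma)$ in \emph{disjoint} pieces. If the direction of $D(\gamma)$ had vanishing $E$-component, then along $D(\gamma)$ the linear maps $f_{ji}$ would not contract (they are bounded on $P$ and expand on $F$), so the intersections $D(U_j)\cap D(\gamma)$ would not shrink to points. But $D(\gamma)$ is a relatively compact segment, and it cannot contain infinitely many disjoint subintervals of non-vanishing length. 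That is the whole proof. It avoids the visibility and half-space machinery entirely, and in particular never needs to confront the extension problem that blocks your Step~3.
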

\begin{proof}
Note that $D(U_i)$ accumulates disjointly on $\overline{D(\gamma)}$. Indeed, we asked that $\pi(\gamma)$ exits $U$ in $M$ between the times $t_i$ and $t_{i+1}$.  So it is again the case in the developing map: $D(\gamma)$ exits $D(U_i)$ before entering into $D(U_{i+1})$. 

But if the direction vector of $D(\gamma)$ vanishes on $E$ then the infinite number of subsets $D(U_j)=T_{ji}D(U_i)$ cannot intersect the relatively compact geodesic $D(\gamma)$ in such a way. Indeed, $D(\gamma)$ is relatively compact and by intersecting along $D(\gamma)$, $T_{ji}(U_i)\cap D(\gamma)$ would not tend to a point, and therefore can not let $D(\gamma)$ exits $U_j$ before entering into $T_{ki}(U_i)\cap D(\gamma)$.
\end{proof}

\begin{proposition}
Let $S\subset \widetilde M$ be a convex containing $\gamma$. Assume that $D(S)$ has smooth boundary at $D(\gamma)(1)$.
Let $i>0$.
The subspace $H_i = E_{+,i}\times (P\oplus F)|_{y_i}$ is visible from $y_i$  (this is a half-space of $\R^n$).
\end{proposition}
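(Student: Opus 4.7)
The plan is to combine Proposition~\ref{prop-eplus}, which provides a convex $\Sigma_i\subset\widetilde{M}$ containing $y_i$ with developed image $E_{+,i}\times P_c$ visible from $y_i$, with the rank-one lemma just above: every direction in $P\oplus F$ gives a complete geodesic from any point of $\widetilde M$. The latter means that for each $z\in\widetilde{M}$ the exponential $\Pi_z=\exp_z((P\oplus F)_z)$ embeds as a visible affine subspace developing injectively onto $D(z)+(P\oplus F)$.

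I would then enlarge the visible region by thickening $S$ along the complete $(P\oplus F)$-directions. For each integer $n$, build a convex $S_n\subset\widetilde{M}$ containing $\gamma$ whose developed image is $S$ thickened by a $(P\oplus F)$-disk of radius $n$ based at $D(\gamma)(0)$ and tapering to $D(\gamma)(1)$, so that the tangent hyperplane of $D(S_n)$ at $D(\gamma)(1)$ coincides with that of $D(S)$. The third bullet of Proposition~\ref{prop-eplus} then yields the same half-space $E_{+,i}$, while the first bullet gives visibility of a product $E_{+,i}\times R_n$ with $R_n\subset(P\oplus F)|_{y_i}$ a neighborhood of $y_i$ whose $P$-extent grows linearly with $n$. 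The $F$-directions are then captured by attaching to each point $z$ of the emerging slab the flat $\Pi_z$: the union $\bigcup_z\Pi_z$ develops onto the full half-space $H_i$. Taking limits via Proposition~\ref{prop-visconv} assembles the pieces into a single convex in $\widetilde{M}$ containing $y_i$ with developed image $H_i$.

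The main obstacle is verifying that the thickened $S_n$ can genuinely be constructed inside $\widetilde{M}$ with the prescribed smooth boundary at $D(\gamma)(1)$, and that the limits of $T_{ji}^{-1}D(S_n)$ both exhaust $H_i$ and combine coherently from one value of $n$ to the next. The rank-one hypothesis enters in two essential ways: it provides completeness of all $(P\oplus F)$-directions (used to build the $S_n$), and through Lemma~\ref{lem-approxneg} it controls the deviation of $T_{ji}^{-1}$ from $Q_{ji}^{-1}$ via the boundedness of $b_{ji,E}$, ensuring that the successive translated convexes line up along a common $F$-direction instead of drifting apart.
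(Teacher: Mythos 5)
There is a genuine gap, and it sits at the heart of your plan: you assert that the rank-one lemma gives completeness of every direction in $P\oplus F$, but the lemma only yields this for $P$. In rank one the decomposition $L_1\oplus L_2\oplus L_3$ has $P=L_3$ canonically, while $E$ and $F$ may be \emph{exchanged} from one Fried dynamics to another. A geodesic pointing in an $F$-direction can perfectly well be incomplete: for its own Fried dynamics the roles swap, its direction then has nonvanishing coordinate along the new $E$, and the lemma is satisfied without any contradiction. (Indeed, whether structures with $F\neq\{0\}$ admit incomplete geodesics at all is precisely the open question raised at the end of section~\ref{sec-3}; one cannot assume $F$-completeness here.) Consequently the flats $\Pi_z=\exp_z((P\oplus F)_z)$ you attach need not exist in $\widetilde M$, and the mechanism by which you capture the $F$-directions collapses. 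Note also that pulling back thickened convexes cannot recover $F$: in Proposition~\ref{prop-eplus} the limit of $T_{ji}^{-1}D(S_n)$ has \emph{vanishing} $F$-coordinates because $f_{ji}^{-1}$ contracts $F$, so enlarging $S$ in the $F$-directions buys nothing; only the $P$-extent survives, which is the part that was already unproblematic.

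The paper obtains the $F$-directions the other way around: since the visible set from $y_i$ is open, a visible convex $K\subset (E\oplus P)|_{y_i}$ admits a small visible thickening $K\times W$ with $W\subset F$ open, and one then pushes this \emph{forward} by $T_{ji}$, whose linear part expands $F$, so that $\lim_j T_{ji}((E_{+,i}\times P|_{y_i})\times W)=(E_{+,i}\times P|_{y_i})\times F|_{y_i}$; Proposition~\ref{prop-visconv} then gives visibility of the limit. Moreover, extending $K$ to all of $E_{+,i}\times P|_{y_i}$ requires ruling out an obstructing incomplete geodesic $\eta$ parallel to $E\oplus P$: here the rank-one hypothesis is used to show that the invisible limit set $C_{0,\infty}'$ of $\eta$ must be parallel to $F|_{y_i}$ (its own $E'$ cannot be $F$ since $\eta$ has no $F$-component), and this set would then meet the visible $E_{+,i}\times P|_{y_i}$, a contradiction. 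Your proposal does not address this obstruction beyond naming it as ``the main obstacle'', and your appeal to Lemma~\ref{lem-approxneg} is also premature: that lemma presupposes $q_i\notin E_{+,i}$, which is only established afterwards, and it is not needed for this proposition. To repair your argument you would need to replace the $P\oplus F$-completeness claim by the push-forward-and-limit argument (or an equivalent use of the expanding dynamics on $F$) and supply the rank-one contradiction argument for the extension inside $E\oplus P$.
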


\begin{proof}
By  proposition~\ref{prop-eplus}, $E_{+,i}$ is a visible  from $y_i$. Since the directions in $P$ are always complete, the product $E_{+,i}\times P|_{y_i}$ is fully visible from $y_i$. We show that we can extend $E_{+,i}\times P|_{y_i}$ to a visible open $H_i$ containing $E_{+,i}\times (P\oplus F)|_{y_i}$.

Let $K\subset (E \oplus P|_{y_i})$ be convex and visible from $y_i$. Since the visible space from $y_i$ is open, there exists an open convex $W\subset F$ such that $K\times W$ is visible and convex. We can extend $K$ to $E_{+,i}\times P|_{y_i}$. Indeed, otherwise, there exists $\eta$ an incomplete geodesic parallel to $E_{+,i}\times P|_{y_i}$ with endpoint in $K\times \{w\}$ and with $w\in W$. Because we are in rank one,  its limit $C_{0,\infty}'$ must be parallel to $F|_{y_i}$. Therefore it intersects $E_{+,i}\times P|_{y_i}$ and is simultaneously invisible since it is in $C_{0,\infty}'\cap (K\times W)$ and visible since $E_{+,i}\times P|_{y_i}$ is, a contradiction.

Now, $(E|_+\times P|_{y_i})\times W$ can be extended to $H_i$ such that it contains $E_{+,i}\times P|_{y_i}\times F|_{y_i} = E_{+,i}\times (P\oplus F)|_{y_i}$. Indeed, apply $T_{ji}$ for $j\to \infty$ (note that $T_{ji}(x+y) = T_{ji}(x)+f_{ji}(y)$):
\begin{align}
\lim_{j\to \infty}  T_{ji}((E|_+\times P|_{y_i})\times W) 
&=(E|_+\times P|_{y_i}) \times \lim_{j\to\infty}f_{ji}(W) \\
&= (E|_+\times P|_{y_i}) \times F|_{y_i}.\qedhere
\end{align}
\end{proof}

\begin{lemma}
The half-spaces $H_i$ tend to a half-space denoted $H_x$ when $i\to \infty$. For $i\to \infty$, $q_i$ gets closer to $\overline{H_i}$. 
\end{lemma}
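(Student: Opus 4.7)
Recall that $H_i=E_{+,i}\times (P\oplus F)|_{y_i}$ is a half-space of $\R^n$ parallel to $P\oplus F$, so it is determined by the half-space $E_{+,i}\subset E|_{q_i}$; equivalently by a unit normal $n_i\in E$ to $\partial E_{+,i}$ together with the position of this hyperplane inside $E|_{q_i}$. By Proposition~\ref{prop-eplus}, $\partial E_{+,i}$ is the limit as $j\to\infty$ of $T_{ji}^{-1}(\tau)\cap E|_{q_i}$, where $\tau:=\rT_{D(\gamma)(1)}D(S)$ is the tangent hyperplane to $D(S)$ at $0:=D(\gamma)(1)$ (chosen as base point). The plan is to address the statement $d(q_i,\overline{H_i})\to 0$ first, since it pins down the ``offset'' of the limiting hyperplane, and then to handle the convergence of the normal direction.

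For the second assertion, the key is that $0\in\tau$, so $T_{ji}^{-1}(0)\in T_{ji}^{-1}(\tau)$. Using $c_{ji,L}=f_{ji}(b_{ji,E})+b_{ji,P}$ and $c_{ji,F}=q_{ji}-f_{ji}(q_{ji})$, I would write
\begin{equation}
T_{ji}^{-1}(0) \;=\; Q_{ji}^{-1}(0) - f_{ji}^{-1}(c_{ji,L}) \;=\; \left(q_{ji}-f_{ji}^{-1}(q_{ji})\right) - b_{ji,E} - f_{ji}^{-1}(b_{ji,P}).
\end{equation}
Each term has a controlled limit as $j\gg i\to\infty$: $q_{ji}\to q_i$ and $f_{ji}^{-1}(q_{ji})\to 0$ since $q_{ji}\in F$ and $f_{ji}^{-1}$ contracts $F$; $b_{ji,E}\to 0$ by Lemma~\ref{lem-approxneg}; and $f_{ji}^{-1}(b_{ji,P})\to 0$ since $b_{ji,P}\to 0$ while $f_{ji}|_P$ acts by a bounded rotation (the diagonal factors on $P$ are trivial because $d_q=0$ there). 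Hence $T_{ji}^{-1}(0)\to q_i$, so the limit hyperplane $\partial E_{+,i}\subset E|_{q_i}$ passes within vanishing distance of $q_i$. Because $q_i\in F$ has trivial $E$-component, the boundary hyperplane $\partial H_i=\partial E_{+,i}+(P\oplus F)$ satisfies $d(q_i,\partial H_i)=d(q_i,\partial E_{+,i})\to 0$.

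For the first assertion, it remains to show that the normal direction $n_i\in E$ converges. The normal to $T_{ji}^{-1}(\tau)$ in $\R^n$ is proportional to $f_{ji}^T n_\tau=f_{ji,A}(f_{ji,K}^T n_\tau)$, and $n_i$ is the normalized $E$-component of this vector in the limit. For $i$ fixed and $j\to\infty$, the rotational factor converges, $f_{ji,K}^T n_\tau\to K_i^T n_\tau$. In the rank one setting, the diagonal entries $\beta_{ji}^{d_q}$ for $e_q\in E$ are powers of a single $\beta_{ji}$, so after normalization only the sub-direction with smallest $|d_q|$ survives, producing a unit vector $n_i$ depending on $K_i$ and this fixed slowest-contracting subspace. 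The cocycle $K_i=K_j f_{ji,K}$ rearranges to $K_j=K_i f_{ji,K}^{-1}\to\id$ as $j\to\infty$, hence $K_i\to\id$ as $i\to\infty$; consequently $n_i\to n_\infty$, a fixed unit vector of $E$ determined by $n_\tau$ and the ray geometry. Combined with the offset result from the previous step, the boundary of $H_i$ converges to the affine hyperplane through $0$ normal to $n_\infty$, and $H_i\to H_x$ is the corresponding half-space on the side visible from the orbit.

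The step I expect to be most delicate is verifying the hypothesis of Lemma~\ref{lem-approxneg}, namely $q_i\notin E_{+,i}$, in the rank one setting: this is not formal and must be argued geometrically. The natural route is by contradiction, exploiting that $F|_{q_i}=C_{0,\infty}$ is an invisible direction at $q_i$, so having $q_i$ in the visible half-space $E_{+,i}$ would propagate (via the convexity argument of Proposition~\ref{prop-visconv} combined with directions of $F$) to a visibility contradiction with the incompleteness of $\gamma$ at $t=1$. Modulo that rank-one input, the argument above gives both the convergence $H_i\to H_x$ and the approximation $d(q_i,\overline{H_i})\to 0$.
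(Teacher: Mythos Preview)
Your main computation---writing $T_{ji}^{-1}(0)=Q_{ji}^{-1}(0)-f_{ji}^{-1}(c_{ji,L})$, unpacking each piece via $c_{ji,L}=f_{ji}(b_{ji,E})+b_{ji,P}$ and $c_{ji,F}=q_{ji}-f_{ji}(q_{ji})$, and concluding $T_{ji}^{-1}(0)\to q_i$ as $j\gg i\to\infty$---is exactly the paper's proof. So is the deduction that $d(q_i,\overline{H_i})\to 0$, and your invocation of Lemma~\ref{lem-approxneg} at the right moment.

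You diverge in two places. First, you flag the verification of $q_i\notin E_{+,i}$ as ``most delicate'' and sketch a contradiction via Proposition~\ref{prop-visconv}. The paper's argument is in fact a one-liner and is precisely the direct form of the idea you outline: since $q_i\in F|_{D(\gamma)(1)}$ one has $D(\gamma)(1)\in q_i+F$, and $H_i=E_{+,i}\times(P\oplus F)|_{y_i}$ contains the entire $F$-fiber through any interior point of $E_{+,i}$; so if $q_i$ lay in the interior of $E_{+,i}$, the invisible point $D(\gamma)(1)$ would sit inside the visible set $H_i$. No further machinery is needed.

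Second, you supply a separate argument for the convergence of the normal direction $n_i$ (via $f_{ji}^T n_\tau$, the cocycle for $K_i$, and selection of the slowest-contracting $E$-direction). The paper's proof does not give such an argument; it records only the offset computation and treats the existence of $H_x=\lim H_i$ informally. Your addition is useful, but as written it has gaps in non-generic cases: it tacitly assumes that $(K_i^T n_\tau)_E$ has a nonzero component along the slowest weight-space of $E$, and that the order of the limits (normalization under $f_{ji,A}$ as $j\to\infty$, then $K_i\to\id$) interacts continuously. If several $d_q$ coincide or the relevant component vanishes, the limiting direction needs a separate discussion.
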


\begin{proof}
For each $H_i$, note that $D(\gamma)(1)\in q_i + F$. Since $D(\gamma)(1)$ is invisible, $H_i$ cannot contain $q_i$ in its interior. Therefore lemma~\ref{lem-approxneg} applies. When $i\to \infty$, by the study of $c_{ji,L}$:
\begin{equation}
\lim_{j\to\infty}T_{ji}^{-1}(D(\gamma)(1)) = \lim_{j\to\infty} - f_{ji}^{-1}(c_{ji,L}) + q_{ji} - f_{ji}^{-1}(q_{ji}) = q_i
\end{equation}
is a point of $\overline{H_x}$.
\end{proof}

\begin{lemma}
Let $x\in\widetilde M$. Let $D(S)\subset D(\widetilde M)$ be  the maximal Euclidean open ball such that $x\in S$ and $S$ is visible from $x$. Then $H_x = \lim H_i $ contains $x$.
\end{lemma}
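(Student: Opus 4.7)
The plan is to show $D(x) \in H_i$ for every sufficiently large $i$, from which $D(x) \in H_x = \lim H_i$ follows by passage to the limit. Place the origin of $\R^n$ at $D(\gamma)(1)$ so that $D(x) = -v$, where $v$ is the displacement of $\gamma$; by the preceding lemma on the non-vanishing $E$-coordinate of an incomplete geodesic, $v_E \neq 0$. The half-space $H_i = E_{+,i} + P + F$ can be written $H_i = \{\ell_i > 0\}$ for an affine functional $\ell_i$ whose linear part vanishes on $P \oplus F$; it is therefore determined by its restriction $L_i$ to $E$, via $E_{+,i} = \{q_i + e : L_i(e) > -\ell_i(q_i)\}$.

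I orient $L_i$ using proposition~\ref{prop-eplus}: $E_{+,i}$ is the side on which $T_{ji}^{-1}(D(S))$ accumulates. Since $f_{ji}^{-1}$ expands $E$, the backward orbit of the center $D(x) = -v$ escapes to infinity along the direction $-v_E$, so $E_{+,i}$ extends to infinity in this direction. After normalizing $\ell_i$ so that $L_i$ has unit norm on $E$, the quantity $L_i(-v_E)$ is bounded below by the positive constant $|v_E|$. By the preceding lemma, $D(\gamma)(1) \in q_i + F$ and $\ell_i$ vanishes on $F$, so $\ell_i(D(\gamma)(1)) = \ell_i(q_i)$; together with the rank-one condition that $H_i$ never contains $q_i$ in its interior, this gives $\ell_i(q_i) \leq 0$ with $\ell_i(q_i) \to 0$ as $i \to \infty$.

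Computing,
\[
\ell_i(D(x)) = \ell_i(q_i) + L_i(-v_E),
\]
the first summand tends to zero and the second is bounded below by $|v_E| > 0$. Hence $\ell_i(D(x)) > 0$ for every sufficiently large $i$, so $D(x) \in H_i$, and passing to the limit yields $D(x) \in H_x$.

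The main obstacle is the orientation claim: showing that $E_{+,i}$ extends along $-v_E$ asymptotically. This is immediate in dimension one, but in higher dimensions $f_{ji}^{-1}$ on $E$ decomposes into eigenspaces with different rates of expansion, and one must use proposition~\ref{prop-eplus}'s description of $\partial E_{+,i}$ via the tangent hyperplane of $D(S)$ at $D(\gamma)(1)$ to identify the correct half-space.
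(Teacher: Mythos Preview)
Your approach has a genuine gap, and you have in fact diagnosed it yourself in the closing paragraph. The assertion that ``$L_i(-v_E)$ is bounded below by the positive constant $|v_E|$'' cannot be right: since you normalized $L_i$ to have unit norm on $E$, Cauchy--Schwarz gives $L_i(-v_E)\le |v_E|$, not $\ge$. What your argument actually requires is a uniform \emph{positive} lower bound for $L_i(-v_E)$ as $i\to\infty$, i.e.\ that the hyperplanes $\partial E_{+,i}$ do not asymptotically become tangent to the direction $-v_E$. This is exactly the ``orientation claim'' you leave open. When $\dim E>1$, the backward orbit $f_{ji}^{-1}(-v_E)$ escapes along the dominant eigendirection of $f_{ji}^{-1}|_E$, not along $-v_E$ itself, so the heuristic you give does not settle the issue; and the appeal to proposition~\ref{prop-eplus}'s tangent-hyperplane description is only a pointer, not an argument.

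The paper avoids this difficulty entirely by a different manoeuvre. Instead of computing the defining functional of $H_i$, it shows that $T_{ji}(D(x))$ lies in $D(S)+(P\oplus F)$ for $j\gg i$ large, which is equivalent to $D(x)\in H_x$. The point is that $D(S)$ is the Euclidean ball of radius $\|x\|$ \emph{centred at} $D(x)$, so on each eigendirection $e_m\subset E$ one has
\[
\bigl|(-x_E+T_{ji}(x)_E)_{e_m}\bigr|\le \lambda_{ji}^{d_m}\,\|b_{ji,E}\|_{e_m}+(1-\lambda_{ji}^{d_m})\,\|x_E\|_{e_m},
\]
using $T_{ji}=f_{ji}(b_{ji,E})+b_{ji,P}+Q_{ji}$. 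Since $0<\lambda_{ji}^{d_m}<1$ on $E$ and $b_{ji,E}\to 0$ by lemma~\ref{lem-approxneg}, this is eventually $<\|x_E\|_{e_m}$, and summing squares gives the global inequality. The choice of $D(S)$ as the maximal Euclidean ball (so that its $E$-projection is again a ball of the same radius centred at $x_E$, with the origin $D(\gamma)(1)$ on its boundary) is what makes this coordinate-by-coordinate contraction estimate work without ever identifying the normal direction to $\partial E_{+,i}$.
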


\begin{proof}
Consider the Euclidean metric that makes $(e_1,\dots,e_n)$ orthonormal.
We can always consider the family of open balls from $D(x)$ that are visible from $x$. There is a maximal one $S$ such that $\gamma\subset S$ is incomplete at $t=1$.

The set $H_x$ is a product $E_+\times (P\oplus F)$ with $E_+$ determined by $\lim T_{ji}^{-1}(S)$. Therefore the point $x$ belongs to $H_x$ if we can show that $T_{ji}(x)$ belongs to $S +  (P\oplus F)$ if $j\gg i$ are large enough.

By the preceding study, with $D(\gamma)(1)$ as base point,
\begin{align}
T_{ji} &= f_{ji}(b_{ji,E}) + b_{ji,P} + Q_{ji}, \\
Q_{ji} &= q_{ji} - f_{ji}(q_{ji}) + f_{ji}.
\end{align}

We want to show that $T_{ji}(x)$ belongs to $S + (P\oplus F)$, so in fact we prove that the $E$-coordinate of $T_{ji}(x)$ belongs to the euclidean ball (in $E$) $\|-x_E+y_E\|_E< \|x\|$.
With $y=T_{ji}(x)$ (note that $b_{ji,P}$ and $q_{ji}$ belong to $P\oplus F$) we obtain:
\begin{equation}
-x_E + T_{ji}(x)_E =  - x_E  +  f_{ji}(b_{ji,E}) + f_{ji}(x_E).
\end{equation}

We can now estimate on each subspace on which $f_{ji}$ acts by (almost) homotheties (note that the rotational part tends to the identity). On a coordinate $e_m$ of $E$ , $f_{ji}$ acts like $\lambda_{ji}^{d_m}$, hence on the coordinate $e_m$:
\begin{align}
\|f_{ji}(b_{ji,E}) + x_E - f_{ji}(x_E)\|_{e_m} &\leq \|f_{ji}(b_{ji,E})\|_{e_m} + \|x_E - f_{ji}(x_E)\|_{e_m}\\
&\leq \lambda_{ji}^{d_m} \|b_{ji,E}\|_{e_m} + (1-\lambda_{ji}^{d_m})\|x_E\|_{e_m}
\end{align}
Since $b_{ji,E}\to 0$, the inequality $\|f_{ji}(b_{ji,E}) + x_E - f_{ji}(x_E)\|_{e_m} <\|x_E\|_{e_m}$ is verified on each coordinate $e_m$, so the required (global) inequality on $E$ is verified  by summing the squares.
\end{proof}

For any $x\in \widetilde M$, we choose $S\subset \widetilde M$ the maximal convex open subset such that $D(S)$ is a Euclidean open ball. By applying the construction to the point $x$, the convex $S$ and an incomplete geodesic $\gamma\subset S$ (which exists by  maximality of $S$) we obtain an half-space $H_x=\lim H_i$ and $D(x)\in H_x$. This choice is now assumed.

\begin{lemma}
Let $I\subset \partial H_x$ be the invisible set from the interior of $H_x$. (Note that $(F\oplus P)|_{D(\gamma)(1)}\subset I$.) 
Then $I$ does not depend on $x\in \widetilde M$ and  $I$ is an affine subspace.
\end{lemma}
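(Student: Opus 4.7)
I would approach the lemma in three stages: convexity of $I$, an intrinsic characterization $I = \partial H_x \setminus D(\widetilde M)$ that immediately gives independence from $x$, and a final affineness argument based on the holonomy-invariance of the deficit $\R^n \setminus D(\widetilde M)$ in the $(F\oplus P)$-direction.

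First, I would apply proposition~\ref{prop-visconv} to establish convexity. If $p_1,p_2\in I$ and some intermediate point $p\in[p_1,p_2]$ were visible from a point $x'\in\widetilde M$ with $D(x')$ in the interior of $H_x$, then taking a sequence of visible convex segments inside $\widetilde M$ whose developing images approximate $[p_1,p_2]$, combined with the already visible sheet over the interior of $H_x$, one could pass to the limit by proposition~\ref{prop-visconv} and obtain visibility of $p_1$ and $p_2$, contradicting $p_1,p_2\in I$. Next I would prove the characterization $I=\partial H_x\setminus D(\widetilde M)$. The inclusion $\supset$ is trivial. For $\subset$, if $p=D(z)\in \partial H_x\cap D(\widetilde M)$ for some $z\in\widetilde M$, then $z$ admits a convex trivializing neighborhood $V$ whose developing image is an open convex neighborhood of $p$; since $p\in\partial H_x$, this image meets the interior of $H_x$, and any preimage $v\in V$ of such an interior point is joined to $z$ by a segment in $V$ developing to a segment from the interior to $p$, exhibiting $p$ as visible. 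This characterization gives independence from $x$ immediately: $D(\widetilde M)$ depends only on $M$, so for any other $x'\in\widetilde M$ we have $I_{x'}=\partial H_{x'}\setminus D(\widetilde M)$, and the invisible points of either boundary belong to the same intrinsic deficit set.

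For affineness, I would exploit that the deficit $\R^n\setminus D(\widetilde M)$ is invariant under the holonomy $\rho(\pi_1(M))$, in particular under every $T_{ji}^{-1}$. The hint gives $(F\oplus P)|_{D(\gamma)(1)}\subset I$, so this flat is in the deficit. The linear part of $T_{ji}^{-1}$ preserves the subspace $F\oplus P$, while the translation part sends $D(\gamma)(1)$ to points converging to $q_i$. Taking limits one obtains that $(F\oplus P)|_{q_i}$ lies in the deficit for every $i$, and since $q_i\to q_\infty\in\overline{H_x}$ the family of parallel $(F\oplus P)$-flats in the deficit can be transported along $\partial H_x$. Combined with convexity, this forces the intersection of the deficit with $\partial H_x$ to be of the form $J+(F\oplus P)$ for some convex subset $J$ of a complementary flat inside $\partial H_x$. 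Finally, in rank one the expansion parameter of the Fried dynamics is scalar and the rigidity statements (lemma~\ref{lem-valprop} and the monotonicity of the $\beta_{ji,q}$) force $J$ to reduce to an affine subspace, hence $I$ is itself an affine subspace. The main obstacle will be the last step: the rigidity argument identifying $J$ with a single flat rather than an arbitrary convex set has to combine the convex-geometric content of $I$ (from proposition~\ref{prop-visconv}) with the asymptotic-dynamical content of the Fried dynamics, and this is where the rank one hypothesis enters crucially.
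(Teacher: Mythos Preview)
Your central step---the characterization $I=\partial H_x\setminus D(\widetilde M)$---does not go through at this stage of the argument. The developing map is not yet known to be a covering (that is precisely what theorem~\ref{thm-comp} will establish \emph{after} this lemma), so a boundary point $p=D(z)\in\partial H_x$ may well lie in the image of $D$ while remaining invisible from the specific convex sheet $\mathcal H_x\subset\widetilde M$ that develops onto $H_x$. In your argument you take $v\in V$ with $D(v)\in\operatorname{int}H_x$ and conclude that $p$ is visible; but $v$ need not belong to $\mathcal H_x$: it lies on a possibly different sheet of $D^{-1}(\operatorname{int}H_x)$. Without injectivity of $D$ on the union of $V$ and $\mathcal H_x$ you cannot connect $z$ to $\mathcal H_x$, and you only know that the \emph{images} $D(V)$ and $H_x$ meet, not that the convex sets themselves meet in $\widetilde M$. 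Even if the characterization held, it would not yield independence from $x$: you obtain $I_x\subset\R^n\setminus D(\widetilde M)$ and $I_{x'}\subset\R^n\setminus D(\widetilde M)$, but these are subsets of the distinct hyperplanes $\partial H_x$ and $\partial H_{x'}$, and nothing forces them to coincide.

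The paper avoids this trap by a comparison argument. For a \emph{visible} boundary point $D(q)\in\partial H_x$ one has an actual point $q\in\widetilde M$ reached from $\mathcal H_x$; one then runs the whole construction again at $q$ to obtain a new half-space $H_q\ni D(q)$. Now the lifts $\mathcal H_x$ and $\mathcal H_q$ genuinely intersect in $\widetilde M$ (since $q$ lies in $\overline{\mathcal H_x}\cap\mathcal H_q$), so $D$ is injective on their union, and the invisible set of one is invisible from the other. Applying the Fried dynamics $T_{ji}$, which asymptotically stabilizes $H_x$, to $H_q$ then forces the invisible sets to agree, and exhibits $I$ as an intersection of boundary hyperplanes, hence an affine subspace. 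Your affineness sketch via holonomy-invariance of the deficit and a ``rigidity of $J$'' is too vague to replace this: the reduction to $J$ affine is exactly the hard point, and the paper's mechanism---trapping $I$ between the boundaries of many $H_q$---is what supplies it.
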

\begin{proof}
Let $D(q)\in \partial H_x$ be visible from $x\in\mathcal H_x$.  Consider $H_q$ a half-space corresponding to $D(q)$ and containing $D(q)$. 

Both half-spaces $D^{-1}(H_x)$ and $D^{-1}(H_q)$ are  convex and intersect. Therefore the developing map is injective on the union.
The invisible set from $H_x$ must be invisible from $H_q$ and vice-versa. But under  a large $T_{ji}$ (that leaves asymptotically stable $H_x$) it can only be possible if $I$ is common to both since $T_{ji}H_q$  would contain in its interior invisible points of $H_q$. (See figure~\ref{fig-3}.)

It shows also that $I$ is an affine subspace because it is the intersection of a finite number of half-spaces.
\end{proof}

\begin{figure}[ht]
\centering
\includegraphics[width=0.75\textwidth]{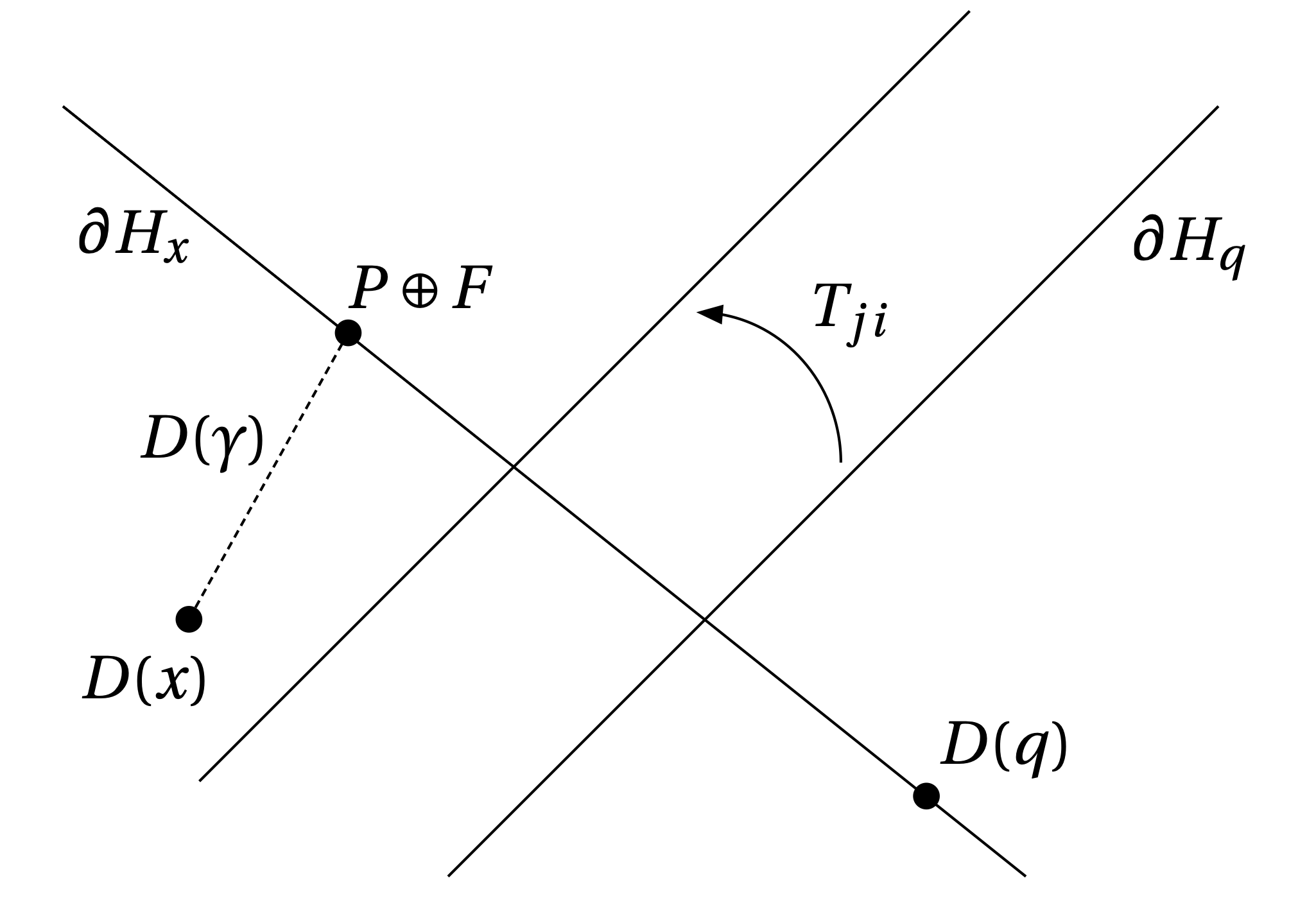}
\caption{The invisible  subspace $I$.}\label{fig-3}
\end{figure}

\begin{theorem}\label{thm-comp}
\thmpartialcomp
\end{theorem}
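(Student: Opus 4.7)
The plan is to proceed by a dichotomy. If $M$ is complete, then by definition $D$ is a diffeomorphism and there is nothing to show. Otherwise $\widetilde M$ contains an incomplete geodesic, and I would feed every point $x\in\widetilde M$ into the construction assembled throughout this section: choose the maximal convex open $S\subset\widetilde M$ with $D(S)$ a Euclidean open ball around $D(x)$ (which necessarily contains an incomplete geodesic by maximality, otherwise $S$ would enlarge), run the Fried-dynamics analysis, and extract the visible half-space $H_x$ with $D(x)\in H_x$. The two final lemmas then provide the canonical invisible affine subspace $I$, sitting in the boundary of every $H_x$ and independent of $x$.

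Next I would observe that $D(\widetilde M)\cap I=\emptyset$. For any $w\in\widetilde M$, the construction applied with $x=w$ yields $D(w)\in H_w$ with $I$ invisible from $w$; but $D(w)$ is trivially visible from $w$ via the constant geodesic, so $D(w)\notin I$. Hence $D$ factors through $\R^n-I$.

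It then remains to show that $D\colon\widetilde M\to D(\widetilde M)\subset\R^n-I$ is a covering, which I would do by geodesic path-lifting. The central input, harvested from the whole Fried-dynamics analysis, is that every incomplete geodesic in $\widetilde M$ develops to a segment whose endpoint lies in $I$. Granting this, for any line segment $\alpha\colon\iI\to\R^n-I$ with $\alpha(0)=D(x)$, the maximal lift $\widetilde\alpha\colon[0,s)\to\widetilde M$ starting at $x$ must satisfy $s=1$: otherwise $\widetilde\alpha$ would be an incomplete geodesic whose developed endpoint $\alpha(s)$ would be forced into $I$, contradicting $\alpha(\iI)\subset\R^n-I$. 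Stitching such lifts along a piecewise-linear path in $\R^n-I$ gives full path-lifting; combined with $D$ being a local diffeomorphism, this implies $D(\widetilde M)$ is both open and closed in $\R^n-I$ (hence a union of connected components), and that $D$ is a covering onto its image.

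The main obstacle is the highlighted input: showing that any incomplete geodesic in $\widetilde M$, not only the one selected as $\gamma\subset S$ when constructing $H_x$, has its developed endpoint inside the canonical $I$. I would handle this by re-running the Fried-dynamics construction with that arbitrary incomplete geodesic playing the role of $\gamma$, producing an a priori different invisible affine subspace; the lemma asserting independence of $I$ from the choice of $x$ (and hence, implicitly, from the choice of incomplete geodesic through it) then identifies this subspace with the canonical $I$, closing the argument.
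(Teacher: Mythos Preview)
Your proposal is correct and follows the same architecture as the paper: harvest the canonical invisible affine subspace $I$ from the Fried-dynamics analysis, observe $D(\widetilde M)\subset\R^n-I$, and conclude by path-lifting.

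The one substantive difference lies in how the path-lifting is executed. You reduce to lifting line segments and take as key input that \emph{every} incomplete geodesic develops to an endpoint in $I$; you flag this as the main obstacle and propose to resolve it by re-running the whole Fried-dynamics machine with the arbitrary geodesic in place of $\gamma$. The paper sidesteps this entirely. It lifts an arbitrary continuous path $\delta\colon\iI\to\R^n-I$ by using directly that, at any lift point, the maximal visible Euclidean open ball coincides with the maximal Euclidean ball avoiding $I$ (its radius is at least $d(D(x),I)$ because the boundary incomplete-geodesic endpoint lies in $I$, and at most $d(D(x),I)$ because $I$ is invisible from $H_x\ni D(x)$). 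Hence for $s$ close enough to $1$ the visible ball at the lift of $\delta(s)$ already contains $\delta([s,1])$, and the lift completes.

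This is shorter and avoids the extra justification your route needs: when you re-run the construction with an arbitrary incomplete geodesic, you must supply a new convex $S'$ (not the maximal Euclidean ball, which need not contain your geodesic) with smooth boundary at the endpoint, and then argue that the independence-of-$I$ lemma still applies across this different choice of $(S,\gamma)$. That check does go through---the proof of the lemma only uses that the two half-spaces are convex, visible, and overlap---but it is work you have not written down. The paper's packaging is more economical; yours is a legitimate variant once that gap is filled.
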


\begin{proof}
The affine subspace $I$ is constant and contains every $D(\gamma)(1)$ for any incomplete geodesic $\gamma\subset S$ in the maximal Euclidean open ball of any $x\in \widetilde M$. We show it implies that $D\colon\widetilde M \to \R^n-I$ is a covering map.

Let $\delta\colon \iI \to \R^n-I$ be a path. Choose $x\in \widetilde M$ such that $D(x)=\delta(0)$. We need to prove that $\delta$ can be lifted to a path in $\widetilde M$ based at $x$. We can assume that $\delta$ can be lifted for $t<1$. We show that is can be lifted at $t=1$. Since $\delta(1)\not \in I$, there exists a point $\delta(s)$ with $s<1$ such that the maximal  Euclidean open ball based at $\delta(s)$ and avoiding $I$ contains $\delta(t)$ for $s\leq t \leq 1$. But then for the lift at time $s$, the corresponding open ball $S$ is convex and allows to lift $\delta$ for $s\leq t \leq 1$.
\end{proof}

To be more precise on the nature of $I$, we use a
discreteness argument very close to what Matsumoto~\cite{Matsumoto} proposed (see also~\cite[end of sec. 4]{Ale}). 

\begin{proposition}
Let $(G_1,\R^n)$ be a rank one ray geometry such that for any Fried dynamics $F=\{0\}$. The invisible subspace avoided by the developing map is $I=P|_w$ for any Fried dynamics associated to an invisible geodesic ending at $w\in I$.
\end{proposition}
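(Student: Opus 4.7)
The inclusion $P|_w\subseteq I$ is immediate. By the preceding lemma describing $I\subseteq\partial H_x$ together with the structure $H_x=E_{+,x}\times(P\oplus F)|_{y_x}$, the invisible set must contain $(F\oplus P)|_w$, which reduces to $P|_w$ under the hypothesis $F=\{0\}$.

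For the reverse inclusion I would argue by contradiction. Suppose $I$ strictly contains $P|_w$. Then the direction $\vec I$ of the affine subspace $I$ properly contains $P$, so $V\coloneqq\vec I\cap E$ is a non-trivial linear subspace of $E$. Since $D(\widetilde M)=\R^n-I$ and $D$ is $\rho$-equivariant, the holonomy $\rho(\pi_1(M))$ preserves $I$ setwise, and consequently its linear parts preserve $\vec I$ and hence $V$.

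The decisive step is a discreteness argument in the style of Matsumoto~\cite{Matsumoto}. The Fried dynamics supplies a sequence $T_{ji}\in\rho(\pi_1(M))$ whose linear parts $f_{ji}$, as $j\gg i\to\infty$, tend (up to rotation) to a contraction on $E$ and the identity on $P$. Using the cocycle relation $T_{ki}=T_{kj}T_{ji}$ together with the estimates on $f_{ji}$ and the translation parts $c_{ji}$ (in particular Lemma~\ref{lem-approxneg}), the invariance of a non-trivial $V\subseteq E$ would allow the construction of a family of pairwise distinct elements of $\rho(\pi_1(M))$ whose actions on a compact subset of $\R^n-I$ accumulate. This contradicts the proper discontinuity of $\rho(\pi_1(M))$ on $\R^n-I$ obtained through Theorem~\ref{thm-comp}. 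Therefore $V=\{0\}$, $\vec I=P$, and since $I$ passes through $w$ we conclude $I=P|_w$. The main obstacle is the explicit manufacture of the accumulating family from the invariance of $V$, which exploits the convergence $f_{ji,K}\to\id$, the monotonic behavior of the diagonal factors on $E$, and the boundedness of $b_{ji,E}$; this is precisely the technical heart of the Matsumoto-type step.
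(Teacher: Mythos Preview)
Your overall strategy---show $P|_w\subset I$ easily and then derive a contradiction to discreteness if $I\cap E|_w$ is larger than $\{w\}$---is the same as the paper's. But the proposal has a genuine gap at exactly the point you flag as ``the main obstacle'': you never actually produce the accumulating sequence. Saying that the invariance of $V=\vec I\cap E$ by the linear parts, together with the estimates on $f_{ji}$ and $b_{ji,E}$, ``would allow the construction'' is not a proof; from invariance of a linear subspace alone one cannot extract a non-trivial converging sequence in $\Gamma$.

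The paper supplies the missing idea. If $I\cap E|_w$ is strictly larger than $\{w\}$, there is a \emph{second} invisible endpoint $w'\in E|_w$, $w'\neq w$, and hence a second Fried dynamics, from which one picks a single holonomy element $R$. Writing $R(x)=b+h(x)$ with base point $w$, one has $b\in E$ and $b\neq 0$. Then one conjugates:
\[
G_j \;=\; T_{ji}\,R\,T_{ji}^{-1},\qquad i\text{ fixed large},\; j\to\infty.
\]
The linear part $f_{ji}hf_{ji}^{-1}$ converges (rank one, $K$ compact, $A$ abelian), and the translation part $c_{ji}+f_{ji}(b)-f_{ji}hf_{ji}^{-1}(c_{ji})$ converges as well since $c_{ji}\to 0$. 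So $G_j$ converges in $G_1$; but it cannot be eventually constant because $f_{ji}(b)$ is not constant ($b\neq 0$ lies in $E$, where $f_{ji}$ genuinely contracts). This contradicts the discreteness lemma. The conjugation trick and the use of a second Fried dynamics are the content you are missing.

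A second, smaller issue: you invoke ``proper discontinuity of $\rho(\pi_1(M))$ on $\R^n-I$ obtained through Theorem~\ref{thm-comp}''. Theorem~\ref{thm-comp} only gives that $D$ is a cover onto $\R^n-I$; when $I$ has codimension~$2$ this does not directly yield discreteness of $\rho(\pi_1(M))$ in $G_1$. The paper's Matsumoto lemma handles precisely this case by lifting to $Q\times\R$ and using that the $T_{ji}$ with small rotation lift essentially as $T_{ji}\times\{0\}$, so that the subgroup they generate is discrete even when the full holonomy is not obviously so. Your appeal to proper discontinuity glosses over this point.
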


\begin{lemma}[{\cite[pp. 214-215]{Matsumoto}}]
The subset $\Delta\subset \Gamma$ constituted by the transformations $T_{ji}$ for every Fried dynamics such that $T_{ji}$ has almost no rotation generates a discrete subgroup of $G_1$.
\end{lemma}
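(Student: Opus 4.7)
The plan is to run Matsumoto's discreteness argument~\cite{Matsumoto} adapted to the rank one ray setting with $F=\{0\}$. Suppose for contradiction that $\Lambda := \langle \Delta \rangle$ is not discrete in $G_1$, and extract a sequence $h_n \in \Lambda \setminus \{\id\}$ with $h_n \to \id$. Each generator $T_{ji} \in \Delta$ equals $\rho(g_{ji})$ for some $g_{ji} \in \pi_1(M)$, so each $h_n$ can be written $\rho(\gamma_n)$ for some word $\gamma_n$ in the $g_{ji}$'s, and the goal is to derive an incompatibility with the closedness of $M$.

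First I would exploit the dynamical normal form of the generators. Under the hypothesis $F=\{0\}$, each $T_{ji} \in \Delta$ has linear part with rotational factor close to $\id_K$ and diagonal factor contracting strictly on $E$ while fixing $P$. Hence $T_{ji}$ genuinely contracts $\R^n$ onto an affine subspace $P|_{q_i} \subset I$, with a uniform contraction ratio of the form $\beta_{ji}^{d_m}$ on each $e_m \in E$. The linear part of a long word $h_n$ is then a product of such near-pure contractions, and in rank one its convergence to $\id$ places severe combinatorial constraints on the powers of $\beta$ appearing in the word.

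Second I would close via the compactness of $M$. Each $T_{ji}$ is the holonomy of a loop based at $y$ that stays inside the fixed compact trivializing neighborhood $U$, so its corresponding element of $\pi_1(M, y)$ admits a representative of bounded geometric size. The convergence $h_n \to \id$, combined with the uniform contraction estimates on $\R^n - I$, then implies that on any compact set $K \subset \widetilde M$ the displacement $d(\gamma_n x, x)$ tends to $0$ uniformly for $x \in K$. Since $\pi_1(M)$ acts properly discontinuously on $\widetilde M$, only finitely many $\gamma_n$ can achieve arbitrarily small displacement on a fixed $K$. Therefore $\{h_n\}$ attains only finitely many values, contradicting $h_n \to \id$ with all $h_n$ distinct and different from $\id$.

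The main obstacle is that $\rho$ need not be injective, so "$h_n \to \id$ in $G_1$" does not immediately force $\gamma_n$ to be small in $\pi_1(M)$. Bridging this gap requires a uniform estimate transferring smallness in $G_1$ to smallness of the deck action on some prescribed compact set: this is precisely where the rank one hypothesis and $F=\{0\}$ become essential, because they ensure that the only way a product of near-identity-times-contraction generators can yield a holonomy close to $\id$ is by moving points of $\widetilde M$ by correspondingly small amounts, so that properness of the deck action can then be invoked to finish the argument.
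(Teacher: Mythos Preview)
Your approach has a genuine gap at precisely the step you flag yourself. You assert that ``$h_n\to\id$ in $G_1$, combined with the uniform contraction estimates on $\R^n-I$, implies that on any compact $K\subset\widetilde M$ the displacement $d(\gamma_n x,x)$ tends to $0$'' --- but you never prove this, and in the codimension~$2$ case it is exactly what fails without further input. When $I$ has codimension~$2$, the space $\R^n-I$ is not simply connected, so $\rho$ can have nontrivial kernel: there may exist deck transformations that wind around $I$ with trivial holonomy. A word $\gamma_n$ in the $g_{ji}$'s with $\rho(\gamma_n)$ close to $\id$ could still carry a large winding number around $I$ and hence displace points of $\widetilde M$ by a large amount. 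Your final paragraph restates the desired conclusion rather than supplying an argument for it. (There is also a factual slip: the loop representing $g_{ji}$ does \emph{not} stay in $U$ --- by construction $\pi(\gamma)$ exits $U$ between $t_i$ and $t_{i+1}$ --- so the ``bounded geometric size'' claim is unfounded.)

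The paper's proof avoids this difficulty entirely by unwinding the winding. It first disposes of the easy case where $\R^n-I$ is simply connected (then $D$ is a diffeomorphism and all of $\Gamma$ is discrete). In the codimension~$2$ case it passes to the universal cover $Q\times\R\to\R^n-I$, obtained by rotating a half-hyperplane $Q$ about $I$; the transformation group becomes $G_1'\times\R$ with the extra $\R$ recording the unwrapped rotation angle. Since $D\colon\widetilde M\to Q\times\R$ is now a diffeomorphism, the lifted holonomy $\widetilde\Gamma$ is discrete. The crucial observation is that a generator $T_{ji}$ with almost no rotation lifts to $\widetilde{T_{ji}}\simeq T_{ji}\times\{0\}$, so the projection $G_1'\times\R\to G_1'$ restricts to an isomorphism $\langle\widetilde\Delta\rangle\to\langle\Delta\rangle$, and discreteness descends. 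This is the missing mechanism your sketch lacks: rather than trying to bound deck displacement from holonomy data, one changes the model space so that holonomy data \emph{is} the deck action.
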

\begin{proof}
If $D\colon\widetilde M \to \R^n-I$ is a cover onto a simply connected complement, it is certainly true since the whole holonomy group $\Gamma\supset \langle \Delta\rangle$ would be discrete.
We need to treat the case where $I$ has codimension $2$.

Consider $G_1'$ the stabilizer of $I$. Assume for simplicity that $0\in I$. Note that $I$ must be stable under $A\subset G_1$. Then $\Gamma\subset G_1'$. Consider also the cover $Q\times \R \to \R^n-I$ consisting in taking a  half-hyperplane $Q$ (stable by $A\subset G_1'$) with $I$ as boundary and rotating it around $I$. The transformation group of this cover is  $G_1'\times \R$. The developing map $D\colon\widetilde M \to \R^n-I$ (that is a cover) is lifted to a diffeomorphism $D\colon\widetilde M \to Q\times \R$. In $Q\times \R$ the new holonomy group $\widetilde \Gamma$ is discrete. Now, if $T_{ji}$ has almost no rotation, then $T_{ji}$ preserves almost $P$ and therefore $\widetilde{T_{ji}}\simeq T_{ji}\times \{0\}$, showing that $\langle \Delta\rangle \simeq \langle \widetilde \Delta\rangle$ is discrete.
\end{proof}

\begin{proof}[Proof of the proposition]
The hypothesis $F=\{0\}$ implies that an asymptotic fixed point $q_i$ for $T_{ji}$ is necessarily equal to $D(\gamma)(1)$ since $q_i\in F|_{D(\gamma)(1)} = \{D(\gamma)(1)\}$.

To prove the proposition, we show that $I\cap E|_{D(\gamma)(1)}$ is always reduced to $D(\gamma)(1)$. It will imply $I = P|_{D(\gamma)(1)}$.

Assume that $T_{ji}$ are the transformation of the Fried dynamics based at the initial $D(\gamma)(1)$. Let $R$ be any other transformation $T_{mn}$ but for a different Fried dynamics associated to a geodesic with its endpoint in $E|_{D(\gamma)(1)}$ but different from $D(\gamma)(1)$. (It can be chosen so if, and only if, $I\neq P|_{D(\gamma)(1)}$.)

By the preceding lemma the subgroup $\langle R,\{T_{ji}\}\rangle$ must be discrete. Now consider
\begin{equation}
G_{j} = T_{ji}RT_{ji}^{-1}
\end{equation}
for an $i>0$ fixed and large enough. We show that $G_j$ must converge without being constant, a contradiction. 

Assume for simplicity that $D(\gamma)(1)=0$.
For $i>0$ fixed, write $T_{ji}(x) = c_{ji}+f_{ji}(x)$ with $c_{ji}\to 0$ and write also $R(x) = b + h(x)$. Then the linear part of $G_j$ is given by $f_{ji}hf_{ji}^{-1}$ and therefore must converge in $KA\subset G_1$.

Now, the translational part is given by $c_{ji}+f_{ji}(b) - f_{ji}hf_{ji}^{-1}(c_{ji})$. Each term must converge and therefore so does the sum. 

Therefore $G_j$ converge, but cannot be constant. Indeed, otherwise, the translational part converge to $f_{ji}(b)$ (since $c_{ji}\to 0$) and should be constant. But $f_{ji}(b)$ cannot be constant since $b\neq 0$ and $b\in E$, since $R$ is associated to a geodesic with endpoint different from $D(\gamma)(1)$ but in $E$.
\end{proof}

\paragraph{Example 1}Fried theorem~\cite{Fried} and other generalizations on $\R^n$~\cite{Ale}, all depend dynamically on the hypothesis $F=P=\{0\}$. The preceding proposition shows that an incomplete manifold is radiant.

\paragraph{Example 2}We can easily construct examples with $F=\{0\}$ and $P\neq \{0\}$. Let $K=\{e\}$ and $A = \{(\lambda x, y )\}$ for $\lambda >0$, $x\in \R^k$ and $y\in \R^m$. Then let $\Gamma$ be a subgroup generated by $(2x,y)$ and a lattice on $\R^m\ni y$. Then $\R^k-\{0\} \times \R^m$ quotiented by this subgroup gives a product of a radiant manifold with a Euclidean manifold. Here, $I = \{0\}\times \R^m$.

\paragraph{Note}In the case of Carrière~\cite{Carriere}, the hypothesis of $1$-discompacity implies necessarily $I = (P\oplus F)|_{D(\gamma)(1)}$ by dimensionality. Now the same construction given by the last proposition can be applied by taking $T_{ji}^{-1}HT_{ji}$: it furnishes a convergent  sequence of transformations (a contradiction). Therefore (for such ray geometries) it does not require an additional argument such that the irreducibility given by Goldman-Hirsch~\cite{GH}.

\paragraph{Open question}Is it true that for any rank one ray geometry we always have $I= P|_{D(\gamma)(1)}$? It would show the completeness of the structures having a non-zero $F$ for any Fried dynamics.  In the next section, we  show the completeness for the structures having a parallel volume (for those, $F$ can never be zero), but we believe that this dynamic-geometric property on $I$ is intriguing.

\section{Reducibility of incomplete manifolds}\label{sec-4}

\paragraph{Markus conjecture}
A first consequence of theorem~\ref{thm-comp} is about  Markus conjecture~\cite{Markus}. This conjectures states that  \emph{closed manifolds with parallel volume are complete}.

The fact that an incomplete manifold has its holonomy that  preserves $I$ implies that the holonomy is reducible. But by Goldman-Hirsch~\cite{GH}, the holonomy of  a closed manifold with parallel volume can never be reducible.

\begin{corollary}\label{thm-markus}
\thmmarkus
\end{corollary}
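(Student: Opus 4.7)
The plan is to combine Theorem~\ref{thm-comp} with the irreducibility result of Goldman--Hirsch~\cite{GH} to rule out the incomplete case. Suppose, for contradiction, that $M$ is a closed $(G_1,\R^n)$-manifold with parallel volume which is not complete. By Theorem~\ref{thm-comp}, there exists a proper affine subspace $I\subset \R^n$ such that the developing map $D\colon \widetilde M \to \R^n - I$ is a cover onto its image.

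The first step is to note that the avoided set $I$ is canonically attached to the developing image, so every deck transformation of $\widetilde M$ projects to an affine automorphism of $\R^n$ that preserves $\R^n - I$, hence preserves $I$. Concretely, for any $\gamma\in \pi_1(M)$ and any $x\in \widetilde M$ we have $D(\gamma\cdot x)=\rho(\gamma)D(x)\in D(\widetilde M)\subset \R^n - I$, and by continuity $\rho(\gamma)$ also sends the boundary set $I$ into itself. Consequently, the linear part of the holonomy preserves the direction subspace underlying $I$. Since $I$ is a proper affine subspace (it is strictly contained in $\R^n$ because the developing image is nonempty and open), this exhibits a proper nontrivial invariant linear subspace for the linear holonomy, so the linear holonomy representation is reducible.

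On the other hand, the hypothesis that $M$ carries a parallel volume form means that the linear holonomy lies in $\mathrm{SL}^{\pm}(\R^n)$. The theorem of Goldman--Hirsch~\cite{GH} asserts that for a closed affine manifold with parallel volume the linear holonomy representation must be irreducible. This contradicts the reducibility obtained in the previous step, so the incomplete alternative of Theorem~\ref{thm-comp} cannot occur, and $M$ must be complete.

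The argument is essentially a two-line reduction once Theorem~\ref{thm-comp} is in hand, so there is no genuine obstacle beyond verifying cleanly that the avoided subspace $I$ is indeed holonomy-invariant (which is automatic from its intrinsic definition as the complement of the developing image) and checking that $I$ being a proper affine subspace gives a nontrivial invariant linear subspace suitable for applying the Goldman--Hirsch dichotomy. The only subtle point worth flagging is the borderline case $\dim I = 0$: even when $I$ is a single point, after translating coordinates so that this point becomes the origin, the linear holonomy still preserves $\{0\}$ trivially, but reducibility in the sense used by~\cite{GH} requires a proper nonzero invariant linear subspace; this is automatically provided by $F|_{D(\gamma)(1)}$ in the Fried dynamics from Section~\ref{sec-3}, which is preserved by the relevant holonomy elements, so the Goldman--Hirsch obstruction still applies.
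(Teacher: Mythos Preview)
Your argument is correct and matches the paper's: invoke Theorem~\ref{thm-comp} to get a holonomy-invariant proper affine subspace $I$, then contradict Goldman--Hirsch irreducibility. The final paragraph's worry about $\dim I=0$ is unnecessary, since the result in~\cite{GH} forbids any proper invariant \emph{affine} subspace (including a single fixed point), so no separate linear-subspace argument via $F$ is needed.
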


\paragraph{The automorphism group}

It is a vague conjecture~\cite{Gromov} that geometric manifolds with large automorphism groups  should be classifiable.

\begin{definition}
Let $M$ be a $(G,X)$-manifold. An \emph{automorphism} $f\colon M \to M$ is a diffeomorphism such that if $\widetilde f\colon\widetilde M \to \widetilde M$ is any lift  then there exists a unique $\chi(\widetilde f)\in N(\Gamma)$ in the normalizer of the holonomy group, such that $D(\widetilde f(x)) = \chi(\widetilde f)D(x)$.
\end{definition}

By unicity of $\chi(\widetilde f)$, if $\widetilde f_1$ and $\widetilde f_2$ are two lifts of $f$ then $\widetilde f_2 = g\widetilde f_1$ for $g\in \pi_1(M)$ and therefore $\chi(\widetilde f_2) = \rho(g)\chi(\widetilde f_1)$.

The hypothesis that $\chi(\widetilde f)$ normalizes $\Gamma$ follows from the fact that if $\widetilde f$ lifts $f$ then it must preserves the fibers $\pi_1(M)$ of $\widetilde M$ and therefore 
\begin{equation}
\chi(\widetilde f) \Gamma \cdot D(x) = D(\widetilde f(\pi_1(M) \cdot x)) = D(\pi_1(M)\cdot \widetilde f(x)) = \Gamma\cdot  \chi(\widetilde f)D(x).
\end{equation}

\begin{theorem}\label{thm-auto}
\thmauto
\end{theorem}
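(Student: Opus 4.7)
The plan is to argue by contrapositive: assuming $M$ is incomplete, I show $\Aut(M)$ acts properly on $M$. By Theorem~\ref{thm-comp}, there is an affine subspace $I \subset \R^n$ such that $D\colon\widetilde M \to \R^n - I$ is a covering onto its image. For any $f \in \Aut(M)$ with lift $\widetilde f$, the intertwining $\chi(\widetilde f)\circ D = D\circ \widetilde f$ forces $\chi(\widetilde f)$ to preserve the developing image and hence $I$; thus $\chi$ sends $\Aut(M)$ into the stabilizer $H = \operatorname{Stab}_{G_1}(I)$.

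The main step is to show that $H$ acts properly on $\R^n - I$. From the construction of $I$ in section~\ref{sec-3}, I use the inclusion $(F\oplus P)|_{D(\gamma)(1)} \subset I$ noted in the lemma immediately preceding Theorem~\ref{thm-comp}: writing $I_0$ for the direction of $I$, we have $I_0 \supset F \oplus P$, while $I \neq \R^n$ gives $I_0 \cap E \subsetneq E$. Let $h_n = (v_n, k_n a_n) \in H$ with $h_n x_n = y_n$ for $x_n, y_n$ in a compact $K_0 \subset \R^n - I$. Since $h_n$ preserves $I$, $v_n \in I_0$ and $k_n a_n$ preserves $I_0$; rank one gives $a_n = a(\lambda_n)$ for scalars $\lambda_n > 0$. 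Projecting to $\R^n/I_0$ kills $v_n$ and sends $I$ to a single point $[w]$, while $K_0$ projects into a compact bounded away from $[w]$. The induced action on $\R^n/I_0 \cong E/(E\cap I_0)$ is a strict contraction as $\lambda_n \to \infty$ and a strict expansion as $\lambda_n \to 0$, since every exponent $d_i$ appearing on $E$ has a fixed nonzero sign. Consequently $[h_n x_n] = k_n a_n [x_n]$ must tend to $[w]$ or escape to infinity, contradicting that it lies in a compact set away from $[w]$. Hence $\lambda_n$ stays in a compact subset of $\R_+^*$; compactness of $K$ handles $k_n$, and then $v_n = y_n - k_n a_n x_n$ is bounded and admits a convergent subsequence too. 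Thus $h_n$ has a convergent subsequence in $H$.

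To conclude, suppose $\Aut(M)$ does not act properly: there is a sequence $f_n \in \Aut(M)$ with $f_n(x_n) = y_n$ for $x_n, y_n$ in a compact $K \subset M$ and with no convergent subsequence. Fix a compact $\widetilde K \subset \widetilde M$ surjecting onto $K$, lift $x_n, y_n$ to $\widetilde x_n, \widetilde y_n \in \widetilde K$, and let $\widetilde f_n$ be the unique lift of $f_n$ sending $\widetilde x_n$ to $\widetilde y_n$. Then $\chi(\widetilde f_n) \in H$ maps $D(\widetilde x_n)$ to $D(\widetilde y_n)$ within the compact $D(\widetilde K) \subset \R^n - I$. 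By the properness of $H$ just proved, a subsequence $\chi(\widetilde f_n)$ converges to some $g \in H$. Since $D$ is a covering and $\widetilde f_n(\widetilde x_n)$ stays in compact $\widetilde K$, a standard covering argument extracts a $C^\infty$-convergent subsequence $\widetilde f_n \to \widetilde f_\infty$, projecting to $f_n \to f_\infty$ in $\Aut(M)$, a contradiction.

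The hardest part is the properness of $H$ on $\R^n - I$: it rests on the combination $F \oplus P \subset I_0$ together with $I_0 \cap E \subsetneq E$, and uses the one-dimensionality of $A_1$ in an essential way. This is consistent with the announced rank-two counter-examples, where a two-dimensional $A$ can contain a direction that fixes $I$ setwise but moves points of $\R^n - I$ without accumulating on $I$, enabling $\Aut(M)$ to act non-properly.
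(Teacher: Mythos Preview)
Your argument is correct and reaches the same conclusion as the paper, but by a genuinely different route. The paper works \emph{inside} $\Gamma$: given $g_n\in N(\Gamma)$, it premultiplies by Fried-dynamics elements $T_{ji}\in\Gamma$ (or their inverses) to force the $A_1$-factor to stay bounded, then uses a second $\eta_n\in\Gamma$ and the decomposition $I\oplus V$ with $V\subset E$ to control the remaining parts and obtain convergence of $\eta_n\gamma_n g_n$. Your approach instead proves a clean standalone statement: the full stabilizer $H=\operatorname{Stab}_{G_1}(I)$ acts properly on $\R^n-I$. You obtain this by passing to the quotient $\R^n/I_0\cong E/(E\cap I_0)$, where the translation part disappears and the linear part has all eigenvalue moduli of the form $\lambda^{d_i}$ with the $d_i$ of a single nonzero sign; this pins down $\lambda_n$, and the rest follows. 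Both arguments use exactly the same two ingredients --- rank one and the inclusion $F\oplus P\subset I_0$ --- but yours isolates the geometric content (properness of $H$) from the holonomy, which is conceptually pleasant and would also explain immediately why $\Gamma$ and $N(\Gamma)$ act properly on $\R^n-I$.

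Two small remarks. First, the phrase ``strict contraction'' is a little loose since $k_n$ varies; what you actually use is that in a $K$-invariant norm adapted to the $A_1$-eigenspace decomposition one has $\|\overline{k_na_n}[x]\|\le \lambda_n^{d_{\max}}\|[x]\|$ (and the reverse inequality via the inverse), which is the estimate that drives the argument. Second, the closing ``standard covering argument'' is correct but heavier than needed: once $\chi(\widetilde f_n)$ converges in $G_1$ (hence in the closed subgroup $N(\Gamma)$), the class $\Gamma g_n$ cannot escape compacts of $\Gamma\backslash N(\Gamma)$, which is precisely the paper's formulation of non-properness --- so you may conclude directly without lifting back to $\Aut(M)$.
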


The fact that $\Aut(M)$ acts non properly on $M$ is equivalent to the existence of  $x_n\to x$ in $M$ and diffeomorphisms $f_n$ such that $f_n(x_n)\to y$, and such that in $\Gamma\backslash N(\Gamma)$, lifts of $f_n$ escape every compact. So we can assume that $x_n\to x$ in $\widetilde M$, $g_n\in N(\Gamma)$ with $\Gamma g_n$ escaping every compact of $\Gamma\backslash N(\Gamma)$, and $y\in\widetilde M$ such that $\Gamma g_n  D(x_n) \to \Gamma D(y)$. 

\begin{proof}
Assume that $M$ is not complete, then by theorem~\ref{thm-comp}, we get that $D\colon\widetilde M \to \R^n-I$ is a cover. Therefore the holonomy $\Gamma$ preserves $I$ and so does the normalizer $N(\Gamma)$. Both must be subgroups of $I\rtimes KA_1$.
We show that $\Aut(M)$ must act properly.

Choose a base point $p\in I$ which is the asymptotic fixed point of a Fried dynamics and consider the decomposition $I\oplus V=\R^n$ with $V\subset E$.

\vskip10pt
Assume that $\Aut(M)$ acts non properly.  As noted, let $g_n\in N(\Gamma)$ be escaping every compact of $\Gamma \backslash N(\Gamma)$, let $x_n\to x_\infty$ and $y$ such that $\Gamma g_nD(x_n)\to \Gamma D(y)$.
 Write
$g_n(x) = c_n + f_n(x)$ with $c_n\in I$ and $f_n\in KA_1$.
The Fried dynamics $T_{ji}\in \Gamma$ considered can be written $T_{ji}{x} = c_{ji}+f_{ji}(x)$. Note that
\begin{equation}
T_{ji}g_n(x) = c_{ji} + f_{ji}(c_n) + f_{ji}f_n(x).
\end{equation}
It shows that we can assume $f_{ji}f_n$ bounded in $KA_1$ since $A_1$ has rank one (up to exchange $T_{ji}$ with $T_{ji}^{-1}$).

Hence there exists  $\gamma_n\in \Gamma$ such that $\gamma_n g_n=  h_n$ has its $KA_1$-factor that converges, up to  a subsequence. The convergence  $\Gamma g_n D(x_n)\to \Gamma D(y)$ says that there exists  $\eta_n\in \Gamma$ such that $\eta_nh_n D(x_n) \to D(y)$.
 
Write 
$\eta_n(x) = b_n + q_n(x)$ and $h_n(x) = c_n + r_n(x)$ we have by construction $r_n\to r$ and
 \begin{equation}
 (\eta_n h_n)(D(x_n)) = b_n + q_n(c_n) + q_nr_n(D(x_n)) \to D(y).
 \end{equation}
 We again have a decomposition in $I\oplus V = \R^n$.
 The term $b_n+q_n(c_n)$ must belong to $I$ since $c_n\in I$. So the $V$-coordinate $((\eta_nh_n)(D(x_n)))_V$ tending to  $D(y)_V$ is determined by $(q_nr_n(D(x_n)))_V$.
But since $D(y)_V\neq 0$ and $V\subset E$, it shows that $q_n$ itself must converge to $q\in KA_1$ (note that $r_n(D(x_n))\to r(D(x_\infty))$).

The $I$-coordinate $((\eta_nh_n)(D(x_n)))_I$ tending to $D(y)_I$ has the same limit as the $I$-factor of $b_n+q_n(c_n)+qr(D(x_n))$.
 Since $D(y)_I$ and $D(x_\infty)$ are both finite, $b_n+q(c_n)$ must converge.
 
 Therefore $\eta_nh_n$ converges in $I\rtimes KA_1$, contradicting the fact that $\Gamma g_n$ escapes every compact of $\Gamma\backslash N(\Gamma)$.
\end{proof}

\paragraph{In higher rank}We give a relatively generic example showing that in higher rank  this phenomenon can  no longer be true. Consider the rank two ray geometry given by the diagonal action of  $(\beta_1 x, \beta_1\beta_2 y)$. Then consider the radiant manifold  $\R^2-\{0\} / \langle (2x,2y)\rangle$. Let $f(x,y) = (x/2,y)$ and $p=(x,1)$. Then $f^n(p)\to (0,1)$ and it corresponds in $M$ to an automorphism acting non properly. The radiant manifold being incomplete, it gives a counter-example in rank two.

\printbibliography
\end{document}